\def\newcor{\global\advance\Cor by 1
\par\bigskip\noindent (\romannumeral\Cor) - }
\newtheorem{theorem}{\textsc{Th\'eor\`eme}}[subsection]
\newtheorem{proposition}[theorem]{\textsc{Proposition}}
\newtheorem{lemma}[theorem]{\textsc{Lemme}}
\newtheorem{corollary}[theorem]{\textsc{Corollaire}}
\newtheorem{remark}[theorem]{\textsc{Remarque}}
\newtheorem{definition}[theorem]{\textsc{D\'efinition}}
\newtheorem{hypoth}[theorem]{\textsc{Hypoth\`ese}}
\newtheorem{hypoths}[theorem]{\textsc{Hypoth\`eses}}
\newtheorem{notation}[theorem]{\textsc{Notation}}
\def\cad{c'est-\`a-dire\ }
\def\bydef{\buildrel \mathrm{d\acute{e}f}\over{=}}
\def\ES#1{\EuScript{#1}}
\def\wt#1{\widetilde{#1}}
\def\bs#1{\boldsymbol{#1}}
\def\mbb#1{\mathbb{#1}}
\def\NN{\mathfrak N}
\def\UU{\mathfrak U}
\def\brT_#1{[T]_{#1}}
\def\brTo_#1{[T_0]_{#1}}
\def\brTX_#1{[T-X]_{#1}}
\def\ptf{\,.}
\def\vg{\,,}
\def\vgq{\,,\quad}
\def\mathpvg{\!\!;}
\def\dd{\,{\mathrm d}}
\def\scrX{\mathscr{X}}
\def\scrY{\mathscr{Y}}
\def\scrV{\mathscr{V}}
\def\scrG{\mathscr{G}}
\def\scrM{\mathscr{M}}
\def\scrA{\mathscr{A}}
\def\scrH{\mathscr{H}}
\def\bsfrX{\bs{\mathfrak{X}}}
\def\bsfrY{\bs{\mathfrak{Y}}}
\def\HHFu{\mathcal{H}^H_{F,u}}
\def\FU{{_F\mathfrak{U}}}
\def\UF{\mathfrak{U}_F}
\def\NF{\mathfrak{N}_F}
\def\guill#1{{\guillemotleft\,#1\,\guillemotright}}
\title[Une mesure de Radon invariante sur les $F$-strates unipotentes]
{Une mesure de Radon invariante\\ sur les $F$-strates unipotentes}
\author{Bertrand Lemaire}
\email{Bertrand.Lemaire@univ-amu.fr}
\address{Institut de Math\'ematique de Marseille (I2M), Aix-Marseille Universit\'e (AMU), CNRS (UMR 7373), France}
\thanks{Ce texte est le fruit d'une collaboration avec Jean-Pierre Labesse: c'est ensemble que nous avons dŽmontrŽ la proposition \ref{proposition noyau} qui 
est le rŽsultat technique principal de l'article. Je remercie aussi Philippe Gille et George McNinch pour avoir eu la patience de rŽpondre ˆ mes questions na•ves. }
\begin{document}

\setcounter{tocdepth}{3}

\begin{abstract} 
Soient $F$ un corps commutatif localement compact non archimŽdien et $G$ un groupe rŽductif connexe dŽfini sur $F$. 
Ë tout ŽlŽment unipotent $u$ de $G(F)$, on a associŽ en \cite{L} une $F$-strate $\bsfrY_{F,u}$ qui est 
rŽunion (Žventuellement infinie) de $G(F)$-orbites unipotentes. On dŽfinit une mesure de Radon positive $G(F)$-invariante non nulle \guill{canonique} 
sur $\bsfrY_{F,u}$. Sous certaines hypothses supplŽmentaires, on en dŽduit la convergence de l'intŽgrale orbitale associŽe ˆ la $G(F)$-orbite 
de $u$. La construction, valable en toute caractŽristique, gŽnŽralise celle de Deligne-Ranga Rao \cite{RR} et s'applique aussi bien aux 
$F$-strates nilpotentes dans $\textrm{Lie}(G)(F)$. 
\end{abstract}

\begin{altabstract}
Let $F$ be a non-Archimedean locally compact field and $G$ a connected reductive group defined over $F$. To any unipotent element 
$u$ in $G(F)$, we have associated in \cite{L} an $F$-stratum $\bsfrY_{F,u}$ which is a (possibly infinite) union of unipotent $G(F)$-orbits. We define a \guill{canonical} 
non-zero positive $G(F)$-invariant Radon measure on $\bsfrY_{F,u}$. Under additional assumptions, we deduce the convergence of the orbital integral associated to the $G(F)$-orbit of $u$. 
The construction, valid in any characteristic, generalizes the one of Deligne-Ranga Rao \cite{RR} and also 
applies to nilpotent strata in $\textrm{Lie}(G)(F)$. 
\end{altabstract}

\subjclass{22E50, 22E35}

\keywords{instabilitŽ, co-caractre optimal, strate unipotente, mesure de Radon, distribution invariante, thŽorme de Deligne-Ranga Rao}

\maketitle

\tableofcontents

%%%%%%%%%%%%%%%%%%
\section{Introduction} \label{introduction}
%%%%%%%%%%%%%%%%%%%%%%%%

\subsection{Notations}
Soit $F$ un corps commutatif d'exposant caractŽristique $p\geq 1$. On fixe une cl™ture 
algŽbrique $\overline{F}$ de $F$ et on note $F^{\rm s\acute{e}p}$, resp. $F^{\rm rad}$, la cl™ture sŽparable, resp. radicielle, de $F$ dans $\overline{F}$. 
On pose $\Gamma_F={\rm Gal}(F^{\rm s\acute{e}p}/F)$. 
Soit $G$ un groupe rŽductif connexe dŽfini sur $F$. On note $\mathfrak{g}= {\rm Lie}(G)$ son algbre de Lie. 
Comme il est d'usage, on identifie $G$ ˆ $G(\overline{F})$ et $\mathfrak{g}$ ˆ $\mathfrak{g}(\overline{F})$.

Si $F$ est localement compact non archimŽdien (non discret), ce que l'on suppose jusqu'ˆ la fin cette introduction, on note $|\,|_F$ 
la valeur absolue normalisŽe sur $F$ et on munit $G(F)$ et $\mathfrak{g}(F)$ de la topologie dŽfinie par $F$, appelŽe $\mathrm{Top}_F$.

%%%%%%%%%%%%%%%%%%%%%%%%%%%%%%%%%%%%%%
\subsection{Le thŽorme de Deligne-Ranga~Rao ($p=1$)}\label{le thŽorme de D-RR} 
On suppose dans cette sous-section que $F$ est une extension finie de $\mbb{Q}_p$. La convergence des intŽgrales orbitales sur $G(F)$ est un rŽsultat connu, dž ˆ Deligne et Ranga Rao \cite{RR}. Gr‰ce ˆ la dŽcomposition de 
Jordan (rationnelle puisque $p=1$), le rŽsultat se dŽduit facilement de la convergence des intŽgrales orbitales unipotentes.

Soit $u$ un ŽlŽment unipotent de $G(F)$. Le centralisateur $G^u(F)$ de $u$ dans $G(F)$ est unimodulaire et l'espace quotient $G^{u}(F)\backslash G(F)$ est muni d'une 
mesure de Radon positive $G(F)$-invariante non nulle $\dd \bar{g}$ (o $\bar{g} = G^u(F)g$ avec $g\in G(F)$). Il est prouvŽ dans loc.~cit. que cette mesure 
$\dd \bar{g}$, vue comme une mesure sur la $G(F)$-orbite 
$$\ES{O}_{F,u} = \{g^{-1}ug\,\vert \, g\in G(F)\}\subset G(F)$$ via l'homŽomorphisme 
$G^u(F)\backslash G(F) \rightarrow \ES{O}_{F,u} ,\, \bar{g} \mapsto g^{-1}ug$, est une 
mesure de Radon positive sur $G(F)$. En d'autres termes, pour toute fonction $f\in C^\infty_{\rm c}(G(F))$, l'intŽgrale 
$$\int_{G^u(F)\backslash G(F)} f(g^{-1} u  g)\dd \bar{g}\leqno{(1)}$$ est absolument convergente et dŽfinit une distribution invariante sur $G(F)$. 
La preuve repose sur un calcul explicite de cette distribution, que nous rappelons ci-dessous. 

Puisque $p=1$, on dispose de l'application exponentielle $\exp : \mathfrak{g}(F)\rightarrow G(F)$. L'ŽlŽment 
$X\in \mathfrak{g}(F)$ tel que $u= \exp(X)$ s'insre dans un $\mathfrak{sl}_2$-triplet de Jacobson-Morosov $(X,H,Y)$: $H$ et $Y$ appartiennent ˆ $\mathfrak{g}(F)$, 
$[H,X]=2X$, $[H,Y]=-2Y$ et $[X,Y]=H$. L'ŽlŽment $H$ dŽfinit une fibration $$\mbb{Z}\in i \mapsto \mathfrak{g}(i)=\{Z\in \mathfrak{g}\,\vert \, [H,Z]= i Z\}$$ de $\mathfrak{g}$ dŽfinie sur $F$. 
Ainsi $\mathfrak{p}= \sum_{i\geq 0}\mathfrak{g}(i)$ est une sous-$F$-algbre parabolique de $\mathfrak{g}$ de radical nilpotent 
$\mathfrak{u}= \sum_{i\geq 1} \mathfrak{g}(i)$ et $\mathfrak{m}= \mathfrak{g}(0)$ est une sous-$F$-algbre de Levi de $\mathfrak{p}$. 
Soient $P$, $U$, $M$ les sous-groupes de $G$ (dŽfinis sur $F$) correspondant ˆ $\mathfrak{p}$, $\mathfrak{u}$ et $\mathfrak{m}$. 
Puisque $p=1$, le morphisme (surjectif) $$G\rightarrow \mathrm{Ad}_G(X),\, g \mapsto {\rm Ad}_{g}(X)\leqno{(2)}$$ est sŽparable; en d'autres termes le commutant 
$\mathfrak{g}^X$ de $X$ dans $\mathfrak{g}$ co\"{\i}ncide avec l'algbre de Lie du centralisateur $G^X = G^{\rm u}$ de $X$ dans $G$. Or $G^X \subset P$ par consŽquent $\mathfrak{g}^X\subset \mathfrak{p}$ et 
le $F$-morphisme $[X,\cdot]: \mathfrak{g}(-1) \rightarrow \mathfrak{g}(1)$ est un isomorphisme. 
Cela assure l'existence d'une fonction $\varphi: \mathfrak{g}(2\mathpvg F) \rightarrow \mbb{R}_+$ telle que 
$\varphi(X) \neq 0$ et, pour tout $X'\in \mathfrak{g}(2,F)$ et tout $m\in M(F)$, on ait 
$$\varphi({\rm Ad}_m(X'))= \vert \mathrm{det}_F\left({\rm Ad}_m\,\vert \, \mathfrak{g}(1;F)\right)\vert_F\varphi(X')\ptf\leqno{(3)}$$ 
Soit $K$ un sous-groupe ouvert compact maximal de $G(F)$ tel que $G(F)= K P(F)$. L'intŽgrale (1) se rŽcrit (pour un bon choix de mesures de Haar) 
$$\int_{G^X(F)\backslash P(F)}f^K \circ \mathrm{exp}({\rm Ad}_{p^{-1}}(X)) \dd_{\mathrm{r}} \bar{p}\quad \hbox{avec} \quad f^K(g)= \int_Kf(k^{-1}gk)\dd k\ptf\leqno{(4)}$$ 
La sŽparabilitŽ du morphisme (2) entra"ne aussi que:
\begin{itemize}
\item l'application $M(F)\rightarrow \mathfrak{g}(2\mathpvg F),\, m \mapsto {\rm Ad}_m(X)$ est submersive, par consŽquent son image est un ouvert de $\mathfrak{g}(2\mathpvg F)$ que l'on note $\ES{O}^M_{F,X}$; 
\item l'application $P(F) \rightarrow \ES{O}_{F,X}^M \oplus \mathfrak{g}_3(F), \, p \mapsto {\rm Ad}_{p}(X)$ est surjective, o l'on a posŽ $\mathfrak{g}_3(F)=\sum_{i\geq 3}\mathfrak{g}(i\mathpvg F)$.  
\end{itemize}
Soit $\Lambda$ la distribution sur $G(F)$ dŽfinie par
$$\Lambda(f) = \int_{\ES{O}_{F,X}^M\times \mathfrak{g}_3(F)} \varphi(X')f^K(\mathrm{exp}(X'+Z))\dd X' \dd Z\ptf$$ 
La propriŽtŽ (3) assure qu'elle est invariante \cite[theorem 1]{RR}: on a $\Lambda(f\circ {\rm Int}_g)= \Lambda(f)$ pour tout $g\in G(F)$. 
Cela prouve qu'ˆ une constante $>0$ prs (qui dŽpend du choix de $\varphi$ et des mesures de Haar), l'intŽgrale (1) co\"{\i}ncide avec $\Lambda(f)$\footnote{
C'est un calcul analogue que nous faisons ici, valable pour $p\geq 1$, en remplaant la $G(F)$-orbite $\ES{O}_{F,u}$ par la $F$-strate $\bsfrY_{F,u}$ (voir \ref{les co-caractres 
(F,u)-optimaux} et \ref{les F-lames et les F-strates}).}.

%%%%%%%%%%%%%%%%%%%%%%%%%
\subsection{L'Žtat des lieux en caractŽristique $p\geq 1$}\label{l'Žtat des lieux} Pour les \guill{bons}\footnote{Rappelons que si $G$ est (absolument) quasi-simple, $p\geq 1$ est dit \textit{bon} pour $G$ si $p=1$ ou si $p>1$ ne divise aucun c\oe fficient de la plus grande racine (exprimŽe comme combinaison linŽaire de racines simples) du systme de racines de $G$. Les \textit{mauvais} $p>1$ pour $G$ sont: aucun si $G$ est de type ${\bf A}_l$; 
$p=2$ si $G$ n'est pas de type ${\bf A}_l$; $p=3$ si $G$ est de type exceptionnel (${\bf G}_2$, ${\bf F}_4$ ou ${\bf E}_*$); $p=5$ si $G$ est de type ${\bf E}_8$. 
Si $G$ n'est pas quasi-simple, $p$ est dit \textit{bon} pour $G$ s'il est bon 
pour tout sous-groupe distinguŽ quasi-simple de $G$.} $p\geq 1$, les orbites gŽomŽtriques 
nilpotentes dans $\mathfrak{g}= \mathfrak{g}(\overline{F})$ sont dŽcrites par le thŽorme de Bala-Carter. Pour $p=1$ ou $p\gg 1$, on peut utiliser la thŽorie des 
$\mathfrak{sl}_2$-triplets. Pour $p$ seulement \guill{bon}, il faut remplacer le $H$ du $\mathfrak{sl}_2$-triplet associŽ ˆ $X$ par un co-caractre $\lambda\in \check{X}(G)$ qui destabilise $X$, \cad 
tel que $\lim_{t\rightarrow 0} {\rm Ad}_{t^\lambda}(X)=0$, et soit en certain sens meilleur que les autres; cela a ŽtŽ fait d'abord par Pommerening en utilisant les 
co-caractres dits \guill{associŽs} ˆ $X$, puis par Premet en utilisant les co-caractres dits \guill{optimaux} de la thŽorie de Kempf-Rousseau. Pour Žtudier les orbites 
nilpotentes rationnelles, il faut ensuite faire descendre ces rŽsultats de $\overline{F}$ ˆ $F$. La thŽorie de Kempf-Rousseau se comporte trs bien par descente sŽparable, 
ce qui permet en gŽnŽral (si $p>1$) de se ramener ˆ $F^{\rm rad}$; mais pas ˆ $F$. 

Des rŽsultats plus fins concernant le thŽorme de Deligne-Ranga~Rao pour $p>1$ sont džs ˆ George McNinch \cite{MN}. 
Sous des hypothses standard un peu moins fortes que \guill{$p$ trs bon}\footnote{Rappelons que $p>1$ est dit \textit{trs bon} pour $G$ s'il est bon pour $G$ et si pour tout sous-groupe distinguŽ 
quasi-simple de $G$ de type ${\bf A}_l$, on a 
$l\not\equiv -1\;({\rm mod}\,p)$.}
mais assurant nŽanmoins que toutes les orbites gŽomŽtriques nilpotentes sont sŽparables, McNinch prouve dans loc.~cit. que si $X$ est un ŽlŽment nilpotent de  
$\mathfrak{g}(F)$, il existe un co-caractre de $G$ qui soit ˆ la fois associŽ ˆ $X$ et dŽfini sur $F$. Il en dŽduit le  
thŽorme de Deligne-Ranga~Rao sur la convergence des intŽgrales orbitales nilpotentes, resp. unipotentes (sous des hypothses supplŽmentaires, 
via une application du type \guill{logarithme} $G(F)\rightarrow \mathfrak{g}(F)$). 

Si $p>1$, les rŽsultats prouvŽs ici redonnent ceux de McNinch \cite{MN} pour les orbites rationnelles nilpotentes, resp. unipotentes\footnote{Pour les orbites unipotentes, on 
n'a pas besoin ici d'hypothses supplŽmentaires car on n'utilise pas d'application logarithme pour passer de $G(F)$ ˆ $\mathfrak{g}(F)$.}. Ils donnent mme plus (cf. \ref{sur la CV des IOU}). 

%%%%%%%%%%%%%%%%%%%%%%%%%%%%%%%%%%%%%%
\subsection{L'analogue du thŽorme de \cite{RR} pour les $F$-strates}\label{l'analogue de [RR]} Cette sous-section et la suivante sont valables 
pour $p\geq 1$, sans aucune hypothse supplŽmentaire. 

Soit $u\neq 1$ un (vrai) ŽlŽment unipotent de $G(F)$, \cad contenu dans le radical unipotent d'un $F$-sous-groupe parabolique de $G$. 
On renvoie ˆ \ref{les co-caractres (F,u)-optimaux} et \ref{les F-lames et les F-strates} pour les dŽfinitions des objets introduits ci-aprs.  
Soient $\lambda \in \Lambda^{\rm opt}_{F,u}$ un co-caractre $(F,u)$-optimal de $G$, $P={_FP_u} \;(=P_\lambda)$ le $F$-sous-groupe parabolique de $G$ associŽ ˆ $u$, 
$\scrY=\scrY_{F,u}$ et $\bsfrY= \bsfrY_{F,u}\,(=\mathrm{Int}_{G(F)}(\scrY_{F,u}))$ respectivement la $F$-lame et la $F$-strate associŽes ˆ $u$. 
Soient aussi $k=m_{F,u}\,(=m_u(\lambda))$ et $\scrX= \scrX_{F,u}\,(=G_{\lambda,k}(F))$. 
On prouve que pour $i=1,\ldots , k-1$, 
le $F$-endomorphisme $X \mapsto {\rm Ad}_u(X)- X$ de $\mathfrak{g}(F)$ induit par restriction et passage aux quotients un isomorphisme 
$$\mathfrak{g}_{\lambda,-i}(F)/\mathfrak{g}_{\lambda,-i+1}(F) \buildrel\simeq \over{\longrightarrow} \mathfrak{g}_{\lambda, -i+k}(F)/\mathfrak{g}_{\lambda, -i+k+1}(F)\ptf\leqno{(1)}$$ 
Cela assure l'existence d'une fonction $$\varphi: \overline{\scrX}= \scrX/G_{\lambda,k+1}(F) \rightarrow \mbb{R}_+$$ telle que pour tout $\overline{x} \in\overline{\scrX}$ et tout $p\in P(F)$, on ait 
$$\varphi({\rm Int}_p(\overline{x})) = \bs{\delta}_P(p)\bs{\delta}_{\lambda , k}(p)^{-1}\varphi(\overline{x})\leqno{(2)}$$ avec 
$$\bs{\delta}_{\lambda ,k} (p)= \vert \det({\rm Ad}_p\,\vert \, \mathfrak{g}_{\lambda,k}(F))\vert_F\quad \hbox{et} \quad \bs{\delta}_{P}= \bs{\delta}_{\lambda,1}\ptf$$

On relve $\varphi$ en une fonction $G_{\lambda,k+1}(F)$-invariante (par transaltions) sur $\scrX$. On a 
$$\varphi(y)\neq 0\quad \hbox{pour tout} \quad y\in \scrY\ptf\leqno{(3)}$$ 
Fixons un sous-groupe ouvert compact maximal $K$ de $G(F)$ tel que $G(F)= KP(F)$. Observons que la $F$-lame $\scrY$ est ($\mathrm{Top}_F$)-ouverte 
dans $\scrX$ et que ${\rm Int}_K(\scrY)=\bsfrY$. Soit $I_{\bsfrY}$ la distribution sur $G(F)$ dŽfinie par 
$$I_{\bsfrY}(f) = \int_{K\times \scrY} \varphi(x)f(kxk^{-1})\dd k \dd x\leqno{(4)}$$ o $\dd x$ est une mesure de Haar sur $\scrX$. Elle annule toute fonction 
$f\in C^\infty_{\rm c}(G(F))$ qui s'annule sur $\bsfrY$ et comme dans \cite{RR}, la propriŽtŽ (2) assure qu'elle est invariante pour la conjugaison dans $G(F)$. 

%%%%%%%%%%%%%%%%%%
\subsection{Une formule du type \guill{intŽgrale orbitale} pour $I_{\bsfrY}$}\label{une formule du type IO}
Pour relier la distribution $I_{\bsfrY}$ ˆ une formule qui ressemble ˆ une intŽgrale orbitale du type \ref{le thŽorme de D-RR}\,(1), il faut travailler un peu plus. 
Soit $\mathcal{H}$ l'espace des fonctions localement constantes $\Phi$ sur $G(F) \times \scrX$ qui vŽrifient, pour 
tout $(g,x)\in G(F)\times \scrX$: 
\begin{itemize}
\item $\Phi(gp,p^{-1}xp) = \bs{\delta}_P(p)^{-1} \bs{\delta}_{\lambda,k}(p)\Phi(g,x)$ pour tout $p\in P(F)$; 
\item il existe un sous-ensemble compact $\Omega_\Phi \subset G(F) \times \scrX$ tel que si 
$\Phi(g,x)\neq 0$, alors il existe un $p\in P(F)$ tel que $(gp,p^{-1}xp)\in \Omega_\Phi$.
\end{itemize} 
On note $\bs{\mu}$ la fonctionnelle linŽaire positive sur $\mathcal{H}$ dŽfinie par 
$$\langle \bs{\mu}, \Phi \rangle = \int_{G(F)\times \scrX}f(g,x)\dd g \dd x$$ 
pour une (i.e. pour toute) fonction $f\in C^\infty_{\rm c}(G(F)\times \scrX)$ telle que 
$$\Phi(g,x) = \int_{P(F)} f(gp,p^{-1} x p) \bs{\delta}_{\lambda,k}(p)^{-1}\dd_{\rm r} p\ptf $$ 
L'application continue $$G(F)\times \scrY \rightarrow \bsfrY,\, (g,y) \mapsto (g,y) \mapsto gyg^{-1}$$ se quotiente en un homŽomorphisme
$$ G(F)\times^{P(F)} \scrY \buildrel \simeq \over{\longrightarrow} \bsfrY\ptf \leqno{(1)}$$ On prouve que  la mesure $\bs{\mu}_{\scrY}^\varphi$ sur 
$G(F)\times^{P(F)}\scrY$ dŽfinie par $$\dd \bs{\mu}_{\scrY}^\varphi (g,y) = \varphi(y) \dd \bs{\mu}(g,y)$$ donne via l'homŽomorphisme (1) une mesure de Radon positive et $G(F)$-invariante (pour la conjugaison) sur $G(F)$. 
PrŽcisŽment, les mesures de Haar Žtant choisies de manire cohŽrente, pour toute fonction $f\in C^\infty_{\rm c}(G(F))$, on a $$ I_{\bsfrY}(f)=\int_{G(F)\times^{P(F)}\scrY}f(gyg^{-1}) 
\varphi(y) \dd \bs{\mu}(g,y)\ptf \leqno{(2)}$$

%%%%%%%%%%%%%%%%%%%%%
\subsection{Sur la convergence des intŽgrales orbitales unipotentes}\label{sur la CV des IOU}
Supposons que la $G(F)$-orbite $\ES{O}_{F,u}$ soit ouverte dans $\bsfrY=\bsfrY_{F,u}$ (e.g si $p=1$\footnote{En effet dans ce cas, la $F$-strate $\bsfrY$   
est rŽunion {\it finie} de $G(F)$-orbites, chacune d'elles Žtant ouverte et fermŽe dans $\bsfrY$.}). Alors la 
$P(F)$-orbite $\ES{O}_{F,u}^P= \{pup^{-1}\,\vert\, p\in P(F)\}$ est ouverte dans $\scrY=\scrY_{F,u}$, 
donc aussi dans $\scrX=\scrX_{F,u}$, et en remplaant $\scrY$ par $\ES{O}_{F,u}^P$ dans la dŽfinition de $I_{\bsfrY}$ (cf. \ref{l'analogue de [RR]}\,(4)), 
on obtient une distribution positive $G(F)$-invariante non nulle $I_u=I_{\ES{O}_{F,u}}$ sur $G(F)$:  
$$I_u(f) = \int_{K \times \ES{O}_{F,u}^P} \varphi(x)f(k x k^{-1}) \dd k \dd x \ptf$$ 
L'existence de $I_u$ assure que le centralisateur $G^u(F)$ de $u$ dans $G(F)$ est unimodulaire. Si 
$\dd g^u$ est une mesure de Haar sur $G^u(F)$, il existe une constante $c>0$ telle que pour toute fonction $f\in C^\infty_{\rm c}(G(F))$, on ait 
$$\int_{G^u(F)\backslash G(F)}f(g^{-1} u g) \textstyle{\frac{dg}{dg^u}}= c\hskip0.3mm I_u(f);$$ 
l'intŽgrale Žtant absolument convergente. C'est le thŽorme de Deligne-Ranga Rao. 
Ce cas particulier contient tous les cas traitŽs dans \cite{MN}. 

On prouve ici que le thŽorme de Deligne-Ranga Rao reste vrai sous les hypothses plus faibles suivantes (\ref{hypothses H}):
\begin{enumerate}
\item[(H1)] Le centralisateur $G^u(F)$ de $y$ dans $G(F)$ est unimodulaire.
\item[(H2)] Il existe un feuillet $S$ pour $u$ (dans $\scrY$ relativement ˆ l'action de $P(F)$), cf. 
\ref{def slice}.
\end{enumerate}
Posons $$\wt{S}= \mathrm{Int}_{P(F)}(S)\quad \hbox{et}\quad \bs{\mathfrak{S}}=\mathrm{Int}_K(\wt{S})\;(= \mathrm{Int}_{G(F)}(S))\ptf$$
D'aprs l'hypothse (H2), $\wt{S}$ est ouvert dans $\scrY$ (par suite $\bs{\mathfrak{S}}$ est ouvert dans $\bsfrY$) et $S$ est fermŽ dans $\wt{S}$. De plus l'espace 
$\wt{S}/P(F)\;(=\bs{\mathfrak{S}}/G(F))$ des $P(F)$-orbites dans $\wt{S}$ est homŽomorphe ˆ $S$.  
On prouve que pour toute fonction $f\in C^\infty_{\rm c}(G(F))$ et tout $y\in \bs{\mathfrak{S}}$, l'intŽgrale  
$$I_y(f)= \int_{G^y(F)\backslash G(F)} f(g^{-1}yg)\textstyle{\frac{dg}{dg^y}}\leqno{(1)}$$ est absolument convergente. De plus, en remplaant $\scrY$ par $\wt{S}$ dans la dŽfinition de $I_{\bsfrY}$ 
(cf. \ref{l'analogue de [RR]}\,(4)), on obtient une distribution positive $G(F)$-invariante $I_{\bs{\mathfrak{S}}}$ sur $G(F)$, donnŽe par (cf. \ref{une formule du type IO}\,(2)): 
$$I_{\bs{\mathfrak{S}}}(f)= \int_{G(F)\times^{P(F)}\wt{S}}f(gyg^{-1}) \varphi(y) \dd \bs{\mu}(g,y)\ptf$$ 
On prouve qu'il existe une mesure de Radon positive non nulle  
$\bs{\eta}_{S}$ sur l'espace des orbites $\wt{S}/P(F)$ telle que pour toute fonction $f\in C^\infty_{\rm c}(G(F))$, on ait 
$$I_{\bs{\mathfrak{S}}}(f)= \int_{\wt{S}/P(F)} \varphi(y) I_y(f) \dd \bs{\eta}_{S}(y)\ptf\leqno{(2)}$$

On a les mmes rŽsultats pour les ŽlŽments nilpotents de $\mathfrak{g}(F)$. 

\vskip1mm
Une remarque conjecturale pour finir. Nous ne connaissons pas d'exemple o les hypothses (H1) et (H2) ne soient pas vŽrifiŽes. D'un autre c™tŽ le dŽveloppement fin 
de la contribution unipotente ˆ la formule des traces \cite{L} devrait fournir localement des distributions invariantes qui sont des intŽgrales sur des ouverts 
($G(F)$-invariants) de $F$-strates unipotentes et pas des intŽgrales orbitales sur les orbites rationnelles.  
C'est pourquoi pour les \textit{mauvais} $p$, 
il n'est peut-tre pas nŽcessaire de pousser plus avant l'Žtude des intŽgrale orbitales unipotentes.

%%%%%%%%%%%%%%%%%%%%%%%
\subsection{Organisation des rŽsultats}\label{organisation}
Dans la section \ref{un rŽsultat technique}, aprs des rappels sur la thŽorie des $F$-strates unipotentes et nilpotentes (\ref{les co-caractres (F,u)-optimaux}-\ref{de G ˆ Lie(G)}), on 
dŽmontre que les morphismes \ref{l'analogue de [RR]}\,(1) sont des isomorphismes --- le rŽsultat technique de l'article --- par la mŽthode des corps proches (\ref{le cas d'un corps local}); on donne aussi une preuve 
de ce rŽsultat valable pour $F$ quelconque si tous les facteurs quasi-simples de $G$ sont de type ${\bf A}_l$, ${\bf D}_l$ ou de type exceptionnel ${\bf E}_*$ (\ref{le cas o les N sont Žgaux ˆ 1 ou -1}). 

La distribution $I_{\bsfrY}$ fait l'objet de la section \ref{mesures de Radon}; en particulier l'ŽgalitŽ \ref{une formule du type IO}\,(2) est obtenue en 
\ref{le rŽsultat principal}. En \ref{le cas o l'orbite est ouverte}, on traite le cas o l'orbite rationnelle est ouverte dans la $F$-strate. 
Ce cas particulier nous permet en \ref{comparaison} 
de comparer nos rŽsultats avec ceux dŽjˆ connus \cite{RR,MN}. En \ref{dŽsintŽgration}, sous les 
hypothses (H1) et (H2), on prouve la convergence des intŽgrales orbitales unipotentes \ref{sur la CV des IOU}\,(1) ainsi que la formule \ref{sur la CV des IOU}\,(2). 
Enfin \ref{le cas global} contient une remarque sur le cas des corps globaux qui sera utile pour la suite de notre travail sur la formule des traces.

%%%%%%%%%%%%%%%%%%%%%%%%%
\section{Un rŽsultat technique}\label{un rŽsultat technique} 

Dans cette section, $F$ est un corps commutatif quelconque et la topologie est celle de Zariski; 
ˆ l'exception de \ref{le cas d'un corps local} o $F$ est un corps localement compact non archimŽdien et la topologie est celle dŽfinie par $F$ ($\mathrm{Top}_F$). 
Le rŽsultat technique en question est la proposition \ref{proposition noyau}. 
Il convient de lire cette section avec \cite{L} sous la main. 

%%%%%%%%%%%%%%%%%%%%%%%%%%%
\subsection{Les co-caractres $(F,u)$-optimaux}\label{les co-caractres (F,u)-optimaux} On a dŽveloppŽ dans \cite{L} une thŽorie des $F$-lames et des $F$-strates unipotentes de $G(F)$ basŽe sur les travaux de 
Kempf-Rousseau \cite{K,R} et Hesselink \cite{H1,H2}. Jusqu'en \ref{gŽnŽ et descente}, on rappelle les principaux rŽsultats de cette thŽorie. 

On fixe une $F$-norme $G$-invariante $\|\,\|$ sur $\check{X}(G)$ (cf. \cite[2.2]{L}). Pour chaque tore maximal $T$ de $G$ 
dŽfini sur $F$, on dispose donc d'une forme bilinŽaire symŽtrique dŽfinie positive $(\cdot , \cdot): \check{X}(T)\times \check{X}(T)\rightarrow \mbb{Z}$ qui 
est invariante par l'action du groupe de Weyl $W^G(T)= N^G(T)/T$ et par celle du groupe de Galois $\Gamma_F$. On Žtend $\| \,\|$ ˆ $\check{X}(G)_{\mbb{Q}}$ 
(cf. \cite[2.4]{L}) et $(\cdot,\cdot)$ ˆ $\check{X}(T)_{\mbb{Q}}= \check{X}\otimes_{\mbb{Z}}\mbb{Q}$ par conjugaison et linŽaritŽ. 

Soit $\UU=\UU^G$ la sous-variŽtŽ fermŽe de $G=G(\overline{F})$ formŽe des ŽlŽments unipotents. Soit $\FU=\FU^G$ le sous-ensemble de $\UU$ 
formŽ des ŽlŽments qui sont contenus dans le radical unipotent d'un $F$-sous-groupe parabolique de $G$, et soit $\UF=G(F) \cap \FU$ 
l'ensemble des (vrais) ŽlŽments unipotents de $G(F)$. L'inclusion $\UF \subset \UU(F)$ est en gŽnŽral stricte. 

Pour $\lambda\in \check{X}(G)$, on note:
\begin{itemize}
\item $P_\lambda$ l'ensemble des $g\in G$ tels que la limite $\lim_{t\rightarrow 0} t^\lambda g t^{-\lambda}$ existe\footnote{i.e. 
tels que le morphisme 
$\mbb{G}_{\mathrm{m}}\rightarrow G,\, t \mapsto t^\lambda g t^{-\lambda}$ se prolonge, de manire nŽcessairement unique, en un morphismde $\mbb{G}_{\mathrm{a}} \rightarrow G$.};  
\item $U_\lambda= U_{P_\lambda}$ l'ensemble des $g\in G$ tels que $\lim_{t\rightarrow 0} t^\lambda g t^{-\lambda}=1$; 
\item $M_\lambda$ le centralisateur de ${\rm Im}(\lambda)$ dans $G$. 
\end{itemize} Ainsi $P_\lambda$ est un sous-groupe parabolique de $G$ de radical unipotent $U_\lambda$ et $M_\lambda$ est une composante de Levi de $P_\lambda$: on a $P_\lambda =M_\lambda \ltimes U_\lambda$.  
Pour $u\in G\smallsetminus \{1\}$ et $\mu\in \check{X}(G)$ tels que $u\in U_\lambda$, la fibre schŽmatique au-dessus de $1$ du prolongement ˆ $\mbb{G}_{\mathrm{a}}$ du 
morphisme $\mbb{G}_{\mathrm{m}} \rightarrow G,\, t \mapsto t^\lambda u t^{-\lambda}$ est d'algbre affine $\overline{F}[T]/(T^m)$ pour un entier $m>0$; on pose 
$m_u(\lambda)=m$ et $\rho_u(\lambda) = \frac{m_u(\lambda)}{\|\lambda\|}$. 

Ë tout ŽlŽment $u\in \UU(F)\smallsetminus \{1\}$ sont associŽs comme suit 
\begin{itemize}
\item un sous-ensemble $\Lambda_{F,u}^{\rm opt}\subset \check{X}_F(G)$; 
\item un invariant $m_{F,u}\in \mbb{N}^*$; 
\item un $F$-sous-groupe parabolique $ {_FP_u}$ de $G$.  
\end{itemize}
Un co-caractre $\lambda\in \check{X}(G)$ est dit \textit{primitif} s'il n'existe aucun co-caractre 
$\lambda'\in \check{X}(G)$ tel que $\lambda = k \lambda'$ avec $k\in \mbb{Z}_{\geq 2}$. Un co-caractre primitif $\lambda\in \check{X}_F(G)$ est dit \textit{$(F,u)$-optimal} si $u\in U_\lambda$ et si pour tout 
$\mu\in \check{X}_F(G)$ tel que $u\in U_\mu$, on a $\rho_u(\mu)\leq \rho_u(\lambda)$.  
On note $\Lambda_{F,u}^{\rm opt}$ l'ensemble des co-caractres 
$(F,u)$-optimaux. L'invariant $m_u(\lambda)$ et le $F$-sous-groupe parabolique 
$P_\lambda$ de $G$ ne dŽpendent pas de $\lambda\in \Lambda_{F,u}^{\rm opt}$; on les note $m_{F,u}$ et ${_FP_u}$, et on pose ${_FU_u}=U_\lambda$. 
Observons que, bien que l'ensemble $\Lambda_{F,u}^{\mathrm{opt}}$ et le groupe ${_FP_u}$ soient 
dŽfinis via le choix de la $F$-norme $G$-invariante $\|\,\|$, ils n'en dŽpendent pas. 
De plus, l'ensemble $\Lambda_{F,u}^{\rm opt}$ forme une seule orbite sous l'action du sous-groupe parabolique $P_{F,u}= {_FP_u}(F)$ de 
$G(F)$; prŽcisŽment, $\Lambda_{F,u}^{\mathrm{opt}}$ est un espace principal homogne sous l'action de $U_{F,u}= {_FU_u}(F)$. Un tore $F$-dŽployŽ maximal $A$ 
de $G$ est dit \textit{$(F,u)$-optimal} s'il est contenu dans ${_FP_u}$; pour un tel $A$, on a $\check{X}(A)\cap \Lambda_{F,u}^{\rm opt}= \{\lambda'\}$ et $M_{\lambda'}$ est une 
composante de Levi de ${_FP_u}$ dŽfinie sur $F$. Enfin on note 
$$\bs{\Lambda}_{F,u} = \left\{ \textstyle{\frac{\lambda}{m_u(\lambda)}}\,\vert \, \lambda \in \Lambda_{F,u}^{\rm opt}\right\}\subset \check{X}_F(G)_{\mbb{Q}}$$ 
l'ensemble des co-caractres virtuels $(F,u)$-optimaux normalisŽs. 

On peut dŽfinir $\bs{\Lambda}_{F,u}$ directement de la manire suivante. 
Pour $\mu \in \check{X}(G)_{\mbb{Q}}$, on choisit un 
$r\in \mbb{N}^*$ tel que $\lambda = r\mu\in \check{X}(G)$. Posons $P_\mu= P_{\lambda}$ et pour $u'\in U_\mu = U_{\lambda}$, posons $m_{u'}(\mu) = \frac{1}{r}m_{u'}(\lambda)$. 
Ces dŽfinitions ne dŽpendent pas du choix de $r$ et l'on a $\frac{m_{u'}(\mu)}{\| \mu \|} = \frac{m_{u'}(\lambda)}{\|\lambda\|} = \rho_{u'}(\lambda)$. On voit 
donc que $\bs{\Lambda}_{F,u}$ est l'ensemble des $\mu\in \check{X}_F(G)_{\mbb{Q}}$ tels que $u\in U_\mu$, $m_u(\mu)\geq 1$ et 
$\|\mu\|$ soit minimal pour ces conditions; si $\mu\in \bs{\Lambda}_{F,u}$, on a forcŽment $m_u(\mu)=1$.

Il est commode d'Žtendre ces dŽfinitions ˆ $u=1$: 
on pose $\Lambda_{F,1}^{\rm opt} = \bs{\Lambda}_{F,1}= \{0\}$, $m_{F,1}=m_1(0)= + \infty$ et ${_FP_1}=G$.

%%%%%%%%%%%%%%%%%%%%%%%%%%%%%%
\subsection{Les $F$-lames et les $F$-strates}\label{les F-lames et les F-strates}
Tout co-caractre $\lambda\in \check{X}(G)$ dŽfinit une filtration $(G_{\lambda ,i})_{i\in \mbb{N}}$ de $G$, cf. \cite[2.4]{L}. 
Si $\lambda$ est dŽfini sur $F$, ces filtrations le sont aussi. On a toujours $P_\lambda = G_{\lambda,0}$ et $U_\lambda = G_{\lambda,1}$. 
Pour $u\in U_\lambda$, on a $$m_u(\lambda)= \inf\{i\in \mbb{N}^*\,\vert\, u\in G_{\lambda,i}\}\ptf$$ 

Ë tout ŽlŽment $u\in \UF\smallsetminus\{1\}$ sont associŽes une $F$-lame $\scrY_{F,u}$ et une $F$-strate $\bsfrY_{F,u}$ (de $\UF$): 
$$\scrY_{F,u} = \{u'\in \UF\, \vert \, \bs{\Lambda}_{F,u'}= \bs{\Lambda}_{F,u}\}\quad \hbox{et} \quad \bsfrY_{F,u}=\mathrm{Int}_{G(F)}(\scrY_{F,u})\ptf$$ 
En d'autres termes, $\bsfrY_{F,u}$ est l'ensemble des $u'\in \UF$ tels que $\bs{\Lambda}_{F,u'}= {\rm Int}_g\circ \bs{\Lambda}_{F,u}$ pour un $g\in G(F)$. 
Les $F$-strates sont donc les classes de $G(F)$-conjugaison de $F$-lames. 
On dŽfinit aussi des sous-ensembles $\scrX_{F,u}$ et $\bsfrX_{F,u}$ de $\UF$ tels que 
$\scrY_{F,u}\subset \scrX_{F,u}$ et $\bsfrY_{F,u}\subset \bsfrX_{F,u}$. Pour cela on choisit 
un $\lambda\in \Lambda_{F,u}^{\rm opt}$ et on pose $k= m_u(\lambda)$; alors 
$$\scrX_{F,u}= G_{\lambda,k}(F)\quad\hbox{et}\quad \bsfrX_{F,u}={\rm Int}_{G(F)}(\scrX_{F,u})\ptf$$
D'aprs \cite[3.4.4]{L}, on a
$$\scrY_{F,u} G_{\lambda , k+1}(F)= G_{\lambda, k+1}(F)\scrY_{F,u}= \scrY_{F,u}\ptf$$ 
Pour $u=1$, on pose $\bsfrY_{F,1}=\scrY_{F,1}=\{1\}$ et $\bsfrX_{F,1}=\scrX_{F,1}=\{1\}$. 

Il n'y a qu'un nombre fini de $F$-strates $\bsfrY_{F,u}$ avec $u\in \UF$ et elles sont deux-ˆ-deux disjointes.

%%%%%%%%%%%%%%%%%%%%%
\subsection{La thŽorie sur l'algbre de Lie}\label{la thŽorie sur Lie(G)} 
On a la mme thŽorie pour $\mathfrak{g}={\rm Lie}(G)$ muni de l'action adjointe de $G$. Pour $\lambda \in \check{X}(G)$, on pose 
$$\mathfrak{p}_\lambda ={\rm Lie}(G_\lambda)\,,\quad \mathfrak{u}_\lambda = {\rm Lie}(U_\lambda)\quad \hbox{et} \quad \mathfrak{m}_\lambda= {\rm Lie}(M_\lambda)\ptf$$ 
Soit $\NN=\NN^G$ la sous-variŽtŽ fermŽe de $\mathfrak{g}= \mathfrak{g}(\overline{F})$ formŽe des ŽlŽments nilpotents. On note 
${_F\NN}= {_F\NN^G}$ le sous-ensemble de $ \NN$ formŽ des ŽlŽments qui sont contenus dans le radical nilpotent d'une sous-$F$-algbre parabolique de $\mathfrak{g}$, et on pose $\NN_F = {_F\NN}\cap \mathfrak{g}(F)$.  
Pour $X\in \NF$, on dŽfinit comme en \ref{les co-caractres (F,u)-optimaux} le sous-ensemble $\Lambda_{F,X}^{\rm opt}\subset \check{X}_F(G)$, l'invariant $m_{F,X}\in \mbb{N}^*\cup \{+\infty\}$ et 
le $F$-sous-groupe parabolique ${_FP_X}$; ainsi que le sous-ensemble $\bs{\Lambda}_{F,X}= \frac{1}{m_{F,X}}\Lambda_{F,X}^{\mathrm{opt}}\subset \check{X}_F(G)_{\mbb{Q}}$. 

Pour $\lambda\in \check{X}(G)$, on dŽfinit comme suit une filtration $P_\lambda$-invariante 
$(\mathfrak{g}_{\lambda,i})_{i\in \mbb{Z}}$ de $\mathfrak{g}$, dŽfinie sur $F$ si $\lambda$ l'est. 
Pour $i\in \mbb{Z}$, on pose $$\mathfrak{g}_{\lambda , i}= \bigoplus_{j\geq i} \mathfrak{g}_\lambda(j)\quad\hbox{avec }\quad 
\mathfrak{g}_\lambda(i)= \{X\in \mathfrak{g}\,\vert \, {\rm Ad}_{t^\lambda}(X)= t^i X,\, \forall t\in \mbb{G}_{\rm m} \};$$ 
l'espace quotient $\mathfrak{g}_{\lambda,i}/\mathfrak{g}_{\lambda,i+1}$ s'identifie naturellement ˆ $\mathfrak{g}_\lambda(i)$.  
On a toujours 
$\mathfrak{p}_\lambda = \mathfrak{g}_{\lambda,0}$, $\mathfrak{u}_\lambda = \mathfrak{g}_{\lambda,1}$ et $\mathfrak{m}_\lambda = \mathfrak{g}_\lambda(0)$. 
Pour $i\in \mbb{Z}$, $\mathfrak{g}_\lambda(i)$ est un sous-espace $M_\lambda$-invariant de $\mathfrak{g}$, dŽfini sur $F$ si $\lambda$ l'est; et pour 
$i\in \mbb{N}$, on a $\mathfrak{g}_{\lambda,i}= {\rm Lie}(G_{\lambda,i})$. 

Pour $X\in \NF$, on dŽfinit comme en \ref{les F-lames et les F-strates} les sous-ensembles $\scrY_{F,X}\subset \scrX_{F,X}$ et $\bsfrY_{F,X}\subset \bsfrX_{F,X}$ de $\NF$. 
Pour $X\in \NF\smallsetminus\{0\}$, $\lambda\in \Lambda_{F,X}^{\rm opt}$ et $k= m_{X}(\lambda)$, on a \cite[2.7.7]{L}
$$\scrY_{F,X}+ \mathfrak{g}_{\lambda,k+1}(F)= \scrY_{F,X}\ptf$$

Comme pour les $F$-strates de $\UF$, il n'y a qu'un nombre fini de $F$-strates de $\NF$ et elles sont deux-ˆ-deux disjointes.

%%%%%%%%%%%%%%%%%%%%%
\subsection{Du groupe ˆ l'algbre de Lie}\label{de G ˆ Lie(G)}\label{de G ˆ Lie(G)} 
Soit $P_0$ un $F$-sous-groupe parabolique minimal de $G$. Notons $U_0=U_{P_0}$ son radical unipotent. 
Observons que $$\UF= {\rm Int}_{G(F)}(U_0(F))\quad \hbox{et} \quad \NF= {\rm Ad}_{G(F)}(\mathfrak{u}_0(F))\ptf$$ Soit $M_0$ une composante de Levi de $P_0$ dŽfinie sur $F$ et soit $A_0$ le tore $F$-dŽployŽ maximal dans le centre de $M_0$. 
On sait qu'il existe un $F$-isomorphisme de variŽtŽs algŽbriques (cf. \cite[3.4]{L}) $$j_0: \mathfrak{u}_0={\rm Lie}(U_0) \rightarrow U_0$$ 
compatible ˆ l'action de $A_0$, \cad tel que $j_0({\rm Ad}_a(X))={\rm Int}_a(j(X))$ pour tout $a\in A_0$ et tout $X\in \mathfrak{u}_0$. 
On note $\ES{R}=\ES{R}_{A_0}\subset X(A_0)$ l'ensemble des racines de $A_0$ dans $G$, $\ES{R}^+\subset \ES{R}$ le sous-ensemble 
formŽ des racines dans $P_0$ et $\Delta\subset \ES{R}^+$ la base formŽe des racines simples.

Un ŽlŽment $u\in \UF$ est dit \textit{en position standard} si ${_FP_u}\supset P_0$. Observons 
que si $u$ est en position standard, alors $u$ appartient ˆ $U_0(F)$\footnote{Cette condition n'est 
pas suffisante: pour que $u\in U_0(F)$ soit en position 
standard, il faut et il suffit que le tore $A_0$ soit $(F,u)$-optimal et que le co-caractre $\lambda$ dŽfini par $\check{X}(A_0)\cap \Lambda_{F,u}^{\rm opt}
= \{\lambda\}$ vŽrifie l'inclusion $P_\lambda \supset P_0$.} et tout ŽlŽment 
de la $F$-lame $\scrY_{F,u}$ est en position standard; auquel cas on dit que $\scrY=\scrY_{F,u}\;(\subset U_0(F))$ est une $F$-lame \textit{standard}. Tout 
ŽlŽment de $\UF$ est $G(F)$-conjuguŽ ˆ un ŽlŽment en position standard et toute $F$-strate de $\UF$ contient 
une unique $F$-lame standard. Les mmes dŽfinitions s'appliquent aux ŽlŽments de $\NF$. 

D'aprs \cite[3.4.5]{L}, l'application $j_0$ induit une bijection entre les $F$-lames standard de $\UF$ et les $F$-lames standard 
de $\NF$. Cette bijection ne dŽpend pas de $j_0$: pour $X\in \NF$ en position standard, on a 
$\bs{\Lambda}_{F,j_0(X)}= \bs{\Lambda}_{F,X}$. On en dŽduit que pour $X\in \NF$ (en position standard ou non), 
l'ensemble $$\bsfrY_{F,X}^G \bydef \{u \in \UF\,\vert \, \hbox{$\exists g\in G(F)$ tel que $\bs{\Lambda}_{F,u} = {\rm Int}_g\circ \bs{\Lambda}_{F,X}$}\}$$ 
est une $F$-strate de $\UF$ qui ne dŽpend que de la $F$-strate $\bsfrY_{F,X}$ de $\NF$. Cela fournit 
une bijection naturelle entre les 
$F$-strates de $\NF$ et celles de $\UF$ \cite[3.4.8]{L}. C'est d'ailleurs gr‰ce ˆ cette bijection que certains rŽsultats pour les $F$-strates de $\UF$ ont ŽtŽ prouvŽs, 
ˆ partir des rŽsultats analogues pour les $F$-strates de $\NF$.

%%%%%%%%%%%%%%%%%%%%%%%%%%%%%%%%
\subsection{GŽnŽralisation et descente}\label{gŽnŽ et descente}  Ces thŽories (sur $G$ et sur $\mathfrak{g}$) sont deux cas particuliers d'une thŽorie plus gŽnŽrale, valable pour toute $G$-variŽtŽ 
algŽbrique affine pointŽe $(V,e_V)$ dŽfinie sur $F$, avec $e_V\in V(F)$, cf. \cite[2.4, 2.5]{L}; le cas d'un $G$-module $(V,e_V=0)$ dŽfini sur $F$ gŽnŽralisant celui 
de $\mathfrak{g}$. 

Un ŽlŽment $v\in V(F)$ est dit \textit{$(F,G)$-instables} s'il existe un co-caractre $\lambda \in \check{X}_F(G)$ tel que $\lim_{t\rightarrow 0}t^\lambda \cdot v = e_V$; et 
il est dit \textit{$(F,G)$-semi-stable} sinon, \cad si pour tout $\lambda\in \check{X}_F(G)$ tel que la $\lim_{t\rightarrow 0} t^\lambda\cdot v$ existe, cette limite n'est pas $e_V$.
On note $\ES{N}_F^G(V,e_V)$ l'ensemble des ŽlŽments \textit{$(F,G)$-instables} de $V(F)$.
Pour $v\in \ES{N}_F^G(V,e_V)$, on dŽfinit comme plus haut ${_FP_v}$, $\bs{\Lambda}_{F,v}$ (etc.),  
et les sous-ensembles $\scrY_{F,v}$ et $\bsfrY_{F,v}=G(F)\cdot \scrY_{F,v}$ de $\ES{N}_F^G(V,e_V)$. 
L'une des propriŽtŽs fondamentales de la thŽorie est la \textit{descente sŽparable} \cite[5.5]{H1} 
(cf. \cite[2.3.11]{L} pour la version non algŽbrique), \cite[2.5.18, 2.5.19]{L}: 

% proposition
\begin{proposition}\label{descente sŽparable}
Soit $v\in \ES{N}_F^G(V,e_V)$. Soit $E/F$ une extension sŽparable (algŽbrique ou non) telle que $(E^\mathrm{s\acute{e}p})^{\mathrm{Aut}_F(E^\mathrm{s\acute{e}p})}= F$\footnote{Cette  
ŽgalitŽ est toujours vŽrifiŽe si $E/F$ est algŽbrique ou de degrŽ de transcendance infini, e.g. si $E$ est le complŽtŽ $F_\nu$ d'un corps global $F$ en une place $\nu$ de $F$.}. 
\begin{enumerate}
\item[(i)] $\bs{\Lambda}_{F,v} = \check{X}_F(G)_{\mbb{Q}}\cap \bs{\Lambda}_{E,v}$; en particulier ${_EP_v}= {_FP_v}\times_FE$.  
\item[(ii)] $\scrY_{F,v}=V(F)\cap \scrY_{E,v}$ et $\bsfrY_{F,v}= V(F)\cap \bsfrY_{E,v}$.
\end{enumerate}
\end{proposition} 

Il convient aussi, quand c'est possible, de comparer la thŽorie sur $F$ avec la thŽorie gŽomŽtrique, \cad sur $\overline{F}$ (si $p=1$, 
cette comparaison est donnŽe par \ref{descente sŽparable}). Pour $F=\overline{F}$, on supprimera l'indice $F$ dans les dŽfinitions prŽcŽdentes: 
$P_u= {_{\smash{\overline{F}}}P_u}$, $\bs{\Lambda}_u= \bs{\Lambda}_{\smash{\overline{F},u}}$, etc. On dira $G\;(=G(\overline{F}))$-instable, resp. $G$-semi-stable, 
au lieu de $(\overline{F},G)$-stable, resp. $(\overline{F},G)$-semi-stable. Pour $v\in \ES{N}_F^G(V,e_V)$, l'intersection 
$\check{X}_F(G)_{\mbb{Q}} \cap \bs{\Lambda}_v$ peut tre vide; si elle est non vide, alors elle co\"{\i}ncide avec $\bs{\Lambda}_{F,v}$ et on a 
$\scrY_{F,v}= \scrY_v(F)$.

Cela nous amne ˆ considŽrer l'hypothse suivante: 

\begin{hypoth}\label{hyp bonnes F-strates}
Pour tout $v\in \ES{N}^G_F(V,e_V)$, on a $\check{X}_F(G)_{\mbb{Q}}\cap \bs{\Lambda}_v\neq \emptyset$. 
\end{hypoth}

Si l'hypothse \ref{hyp bonnes F-strates} est vŽrifiŽe, alors pour tout $v\in \ES{N}_F^G(V,e_V)$, on a \cite[2.6.13]{L} $$\bsfrY_{F,v}= \ES{N}_F^G(V,e_V)\cap \bsfrY_v(F);$$ 
cela fournit une bijection entre les strates (gŽomŽtriques) de $\ES{N}^G(V,e_V)$ qui intersectent $\ES{N}_F^G(V,e_V)$ et les $F$-strates de $\ES{N}_F^G(V,e_V)$. 

Nous savons que pour $V=(G,e_G=1)$ muni de l'action par conjugaison et pour $V=(\mathfrak{g}, e_\mathfrak{g}=0)$ muni de l'action adjointe, l'hypothse \ref{hyp bonnes F-strates} est vŽrifiŽe 
dans les cas suivants: 
\begin{itemize}
\item $p=1$ ou $p>1$ est \textit{trs bon} pour $G$; 
\item $F$ est un corps local (i.e. localement compact, non discret); 
\item $F$ est un corps global.
\end{itemize}

%%%%%%%%%%%%%%%%%%%%
\subsection{\'EnoncŽ du rŽsultat technique et premires rŽductions}\label{ŽnoncŽ}On suppose jusqu'ˆ la fin de la section \ref{un rŽsultat technique} que 
l'hypothse \ref{hyp bonnes F-strates} est vŽrifiŽe pour $V=G$ muni de l'action par conjugaison: pour tout $u\in \UF$, on a $\check{X}_F(G)_{\mbb{Q}}\cap \bs{\Lambda}_u\neq \emptyset$. 
D'aprs \cite{L}, elle l'est alors aussi pour $V=\mathfrak{g}$ muni de l'action adjointe: pour tout $X\in \NF$, on a $\check{X}_F(G)_{\mbb{Q}}\cap \bs{\Lambda}_X\neq \emptyset$. 

Pour $u\in \UU$, on note $G^u$ le centralisateur schŽmatique $u$ dans $G$ et on pose 
$$\mathfrak{g}^u = \ker \{{\rm Ad}_u -{\rm Id}: \mathfrak{g} \rightarrow \mathfrak{g}\}\ptf$$ 
On a toujours l'inclusion ${\rm Lie}((G^u)^{\mathrm{r\acute{e}d}}) \subset \mathfrak{g}^u$ o $(G^u)^{\mathrm{r\acute{e}d}}$ est le centralisateur schŽmatique \textit{rŽduit} de $u$ dans $G$ 
(correspondant au quotient de l'algbre affine $F[G^u]$ par son nil-radical) que l'on identifie au centralisateur de $u$ dans $G$ au sens de Borel \cite{B}. 
Cette inclusion est une ŽgalitŽ si et seulement si le morphisme 
$$\pi_u:G \rightarrow {\rm Int}_G(u),\, g \mapsto gug^{-1}$$ est sŽparable, auquel cas on dit que $u$ est {\it sŽparable} (cf. \cite[2.1]{L}). Pour 
$u\in \UF$ et $\lambda \in \Lambda_{F,u}^{\rm opt}$, puisque $\lambda$ appartient ˆ $ \Lambda_{F^{\rm s\acute{e}p},u}^{\rm opt}$ (\ref{descente sŽparable}), on a toujours l'inclusion 
$$G^u(F^{\rm s\acute{e}p})\subset P_{F^{\rm s\acute{e}p},u} =P_\lambda(F^{\rm s\acute{e}p})\ptf$$ Si de plus $u$ est sŽparable, alors $G^u= (G^u)^{\mathrm{r\acute{e}d}}$ (cf. \cite[ch.~II, 6.7]{B}), 
i.e. $G^u$ \guill{est} un groupe algŽbrique linŽaire, et 
on a l'inclusion $G^u\subset P_\lambda$ (d'aprs \cite[ch.~AG, 13.3]{B}). Posons 
$$\mathfrak{p}_{F,u}= \mathfrak{p}_\lambda(F)\;(={\rm Lie}({_FP_u})(F))\ptf$$
Observons que si $u$ est sŽparable, alors $\mathfrak{g}^u ={\rm Lie}(G^u)\subset \mathfrak{p}_\lambda$ et donc 
$\mathfrak{g}^u(F) \subset \mathfrak{p}_{F,u}$. 

% remarque
\begin{remark}
{\rm Nous ne savons pas si l'inclusion $\mathfrak{g}^u(F)\subset \mathfrak{p}_{F,u}$ est vraie en gŽnŽral, \cad pour les ŽlŽments $u$ qui ne sont pas sŽparables; de toutes faons nous n'aurons besoin 
que de la version graduŽe \ref{proposition noyau} de cette inclusion. En revanche nous savons que pour $Y \in \NF\smallsetminus \{0\}$, le centralisateur
$$\mathfrak{g}^Y(F) = \{X\in \mathfrak{g}(F)\,\vert \, [Y,X]=0\}$$ de $Y$ dans $\mathfrak{g}(F)$ peut ne pas tre contenu dans $\mathfrak{p}_{F,Y} ={\rm Lie}({_FP_Y})(F)$ 
(cf. \ref{contre-exemple}).
}
\end{remark}

D'aprs les relations de commutateurs de Chevalley, 
pour $\lambda \in \check{X}_F(G)$ et $x\in G_{\lambda ,k}$ avec $k\in \mbb{N}$, le morphisme linŽaire ${\rm Ad}_x -1: \mathfrak{g} \rightarrow \mathfrak{g}$ envoie 
$\mathfrak{g}_{\lambda,i}$ dans $\mathfrak{g}_{\lambda,k+i}$ pour tout $i\in \mbb{Z} $. Par restriction et passage au quotient, il induit donc, pour chaque $i\in \mbb{Z}$, un 
morphisme linŽaire $$\eta_{\lambda, x}(i): \mathfrak{g}_{\lambda}(i) \rightarrow \mathfrak{g}_{\lambda, k+i}/ \mathfrak{g}_{\lambda, k+i+1}= \mathfrak{g}_\lambda(k+i)$$ qui ne dŽpend que de 
l'image $\overline{x}$ de $x$ dans $$G_\lambda(k)\bydef G_{\lambda,k}/ G_{\lambda ,k+1}\ptf$$ Si $x\in G_{\lambda,k}(F)$, ou plus gŽnŽralement si $\overline{x}\in G_\lambda(k\mathpvg F)$, 
alors $\eta_{\lambda,x}(i)$ est dŽfini sur $F$ et induit un morphisme $F$-linŽaire 
$$\eta_{\lambda,x}(i\mathpvg F) : \mathfrak{g}_{\lambda}(i\mathpvg F) \rightarrow \mathfrak{g}_{\lambda,k+i}(F)/ \mathfrak{g}_{\lambda,k+i+1}(F)= \mathfrak{g}_\lambda(k+i\mathpvg F)\ptf$$

La proposition suivante est le rŽsultat technique principal de l'article.
%\footnote{Il s'agit d'une version affaiblie d'un rŽsultat classique si $p=1$ ou $p\gg 1$ (cf. \cite[lemma~5.7]{J}).}.

% lemme
\begin{proposition}\label{proposition noyau}
(On suppose que l'hypothse \ref{hyp bonnes F-strates} est vŽrifiŽe pour $G$.) Soient $u \in \UF\smallsetminus \{1\}$, $\lambda\in \Lambda_{F,u}^{\rm opt}$ et $k=m_u(\lambda)>0$. 
Pour $i=1,\ldots ,k-1$, 
on a l'inclusion
$$\{X\in \mathfrak{g}_{\lambda, -i }(F)\,\vert \, {\rm Ad}_u(X)- X \in \mathfrak{g}_{\lambda,k-i+1}(F)\} \subset \mathfrak{g}_{\lambda,-i+1}(F);$$ 
en d'autres termes, le morphisme $\eta_{\lambda,x}(-i;F)$ est injectif. 
\end{proposition}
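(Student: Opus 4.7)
Mon plan est d'utiliser un argument d'optimalit\'e \`a la Kempf-Rousseau apr\`es r\'eduction au cas s\'eparable par descente. Par la proposition \ref{descente séparable}, $\lambda$ reste $(F^{\rm s\acute{e}p}, u)$-optimal avec le m\^eme invariant $k = m_u(\lambda)$, et la filtration $\mathfrak{g}_{\lambda, \bullet}$ est stable par extension s\'eparable. La conclusion \'etant $F$-lin\'eaire, il suffit donc de la prouver apr\`es extension \`a $F^{\rm s\acute{e}p}$. Lorsque $u$ est s\'eparable, l'inclusion $\mathfrak{g}^u = {\rm Lie}(G^u) \subset \mathfrak{p}_\lambda = \mathfrak{g}_{\lambda, 0}$ rappel\'ee en \ref{énoncé} fournit la version \guill{non gradu\'ee} du r\'esultat et sert de point d'appui pour la version gradu\'ee.

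Le c\oe ur de l'argument consiste \`a supposer par l'absurde qu'il existe $\bar{X} \in \mathfrak{g}_\lambda(-i\mathpvg F^{\rm s\acute{e}p}) \setminus \{0\}$ dans le noyau de $\eta_{\lambda, u}(-i)$ avec $1 \le i \le k - 1$, et \`a en d\'eduire une contradiction avec l'optimalit\'e de $\lambda$. Pr\'ecis\'ement, via l'identification canonique entre les gradu\'es de la filtration sur $G$ et ceux sur $\mathfrak{g}$, on rel\`eve $\bar{X}$ en un \'el\'ement unipotent $v$ de $G_{\lambda, -i}$, ce qui fournit une d\'eformation \`a un param\`etre $\lambda_t$ de $\lambda$. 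L'annulation $\eta_{\lambda, u}(-i)(\bar{X}) = 0$ implique $u \in U_{\lambda_t}$ et $m_u(\lambda_t) \geq k$ pour $t$ dans un voisinage formel de $0$, tandis qu'un calcul explicite utilisant la norme $G$-invariante sur $\check{X}(G)_{\mbb{Q}}$ donne $\|\lambda_t\|^2 < \|\lambda\|^2$. On obtient alors $\rho_u(\lambda_t) > \rho_u(\lambda)$, contredisant l'optimalit\'e de $\lambda$.

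Pour $u$ non s\'eparable --- situation qui ne survient qu'en \guill{mauvaise} caract\'eristique --- la d\'eformation ci-dessus ne peut \^etre construite via l'exponentielle, et il faut proc\'eder indirectement. Pour $F$ localement compact de caract\'eristique $p > 0$, on choisit un corps local $F'$ de caract\'eristique nulle $N$-proche de $F$ pour $N$ assez grand pour d\'etecter l'injectivit\'e de l'application $F$-lin\'eaire $\eta_{\lambda, u}(-i\mathpvg F)$; on transporte les donn\'ees de Chevalley, le co-caract\`ere $\lambda$ et l'\'el\'ement $u$ \`a $F'$, on applique le cas s\'eparable (automatique en caract\'eristique $0$), et on en d\'eduit l'injectivit\'e sur $F$. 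Une preuve parall\`ele valable sur tout corps $F$ est propos\'ee pour les types simplement lac\'es (${\bf A}$, ${\bf D}$, ${\bf E}_\ast$), o\`u les constantes de Chevalley \'etant $\pm 1$, le calcul ci-dessus passe en toute caract\'eristique. L'obstacle principal est pr\'ecis\'ement le transport par corps proches : il faut v\'erifier que l'optimalit\'e de $\lambda$, les identifications gradu\'ees et l'inclusion $\mathfrak{g}^{u'} \subset \mathfrak{p}_{\lambda'}$ sur $F'$ se comportent correctement vis-\`a-vis de la congruence modulo $\pi^N$, ce qui est vraisemblablement le c\oe ur de la collaboration avec Labesse signal\'ee dans les remerciements.
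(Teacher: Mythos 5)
Your proposal has the right skeleton --- separable descent, the separable case via $\mathfrak{g}^u\subset\mathfrak{p}_\lambda$, a Chevalley-constants argument for simply-laced types, close fields when $F$ is local of characteristic $p>0$ --- but the two mechanisms that actually carry the proof are missing. The central \guill{deformation} step does not work as stated. There is no group $G_{\lambda,-i}$ for $i>0$ (the filtration is indexed by $\mbb{N}$), cocharacters form a lattice so a one-parameter family $\lambda_t$ has no meaning, and, most seriously, in bad characteristic the vanishing of the leading term of ${\rm Ad}_u(X)-X$ produces no destabilizing perturbation of $\lambda$\,: the kernel of ${\rm ad}_Y$ can strictly contain ${\rm Lie}(G^Y)$, which is exactly why a nonzero $X$ killed by ${\rm ad}_Y$ need not integrate to anything. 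Remark \ref{contre-exemple} exhibits a nonzero $X\in\mathfrak{g}_\lambda(-k;F)$ with $[X,Y]=0$ for $\lambda$ optimal, so the implication you claim cannot be formal --- it must use $i<k$, and you never do. The paper's perturbation of $\wt{\lambda}$ by $\frac{1}{a}\eta$ with $\eta$ dual to a root in the support of $X$ (lemma \ref{si C=0 alors X=0}) yields a contradiction with the minimality of $\|\wt\lambda\|$ only \emph{after} one knows that every root in the support of $X$ pairs non-negatively with every root in the support of $Y$, i.e. that all the sets $C(\gamma)$ are empty; establishing that precondition is the hard step (a nontrivial $\mbb{Z}$-relation among the $N_{\alpha,\beta}$, lifted to a pair $(\bs{X}_z,\bs{Y}_z)$ over $\mbb{C}$ with $\bs{Y}_z$ semistable by Seshadri's theorem, contradicting the separable case over $\mbb{C}$), and it is entirely absent from your argument. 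Note also that the problem must first be moved from $G$ to $\mathfrak{g}$ via $j_0$ and from $Y$ to its leading graded component $Y(k)$, which remains optimal for $\lambda$; otherwise the statement is not of the form $[X,Y]=0$ to which the separable case applies.

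The close-fields transport is precisely the point you defer, and it cannot be carried out at a single level of closeness. What transports is not \guill{injectivity of $\eta_{\lambda,u}(-i;F)$} --- a congruence modulo $\pi^N$ never detects the nonvanishing of a determinant whose valuation you do not control a priori --- but the optimality of $\lambda$, via the rational Kirwan-Ness criterion: optimality is equivalent to the $(F,M_\lambda^\perp)$-semistability of $Y(k)$, and semistability is an open condition readable on $\scrV(\mathfrak{o}_{F,m})$ for some finite $m$ by the Iwasawa decomposition. To bring the conclusion back from $F'$ to $F$ one must reformulate injectivity as surjectivity onto a lattice (the two are equivalent by the dimension count $d_{-i}=d_{k-i}$) and prove $[Y,\mathfrak{g}_\lambda(-i\mathpvg\mathfrak{o}_F)]+\mathfrak{p}_F^{me}\mathfrak{g}_\lambda(k-i\mathpvg\mathfrak{o}_F)=\Omega$ for \emph{every} $e\geq 1$, by replacing $F'$ with totally ramified extensions $F''/F'$ of arbitrary degree, before passing to the limit. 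As it stands your text correctly names the cases treated in the paper but leaves the proof of each of them open.
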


On ne dŽmontrera \ref{proposition noyau} que dans les cas particuliers suivants, le cas gŽnŽral pouvant tre considŽrŽ comme conjectural: 
\begin{itemize}
\item $p=1$ ou $p>1$ est trs bon pour $G$; 
\item les sous-groupes (absolument) quasi-simples de $G$ sont tous de type ${\bf A}_l$, ${\bf D}_l$ ou de type exceptionnel $E_*$ (de manire Žquivalente, les 
entiers $N_{\alpha,\beta}$ des formules de commutateurs de chevalley sont tous Žgaux ˆ $\pm 1$, cf. \ref{remarque sur les constantes N}); 
\item $F$ est un corps local (i.e. localement compact, non discret) ou un corps global.
\end{itemize}
Ces cas sont non exclusifs l'un de l'autre. Observons que dans le premier et le troisime cas, l'hypothse \ref{hyp bonnes F-strates} est automatiquement vŽrifiŽe. 
La dŽmonstration de \ref{proposition noyau} 
occupera la suite de \ref{ŽnoncŽ} ainsi que les sous-sections \ref{le cas o les N sont Žgaux ˆ 1 ou -1} et \ref{le cas d'un corps local}. 

% remarque
\begin{remark}
\textup{
Le cas qui nous servira ici est celui o $F$ est un corps local non archimŽdien de caractŽristique $p>1$. Il sera traitŽ en \ref{le cas d'un corps local} par la mŽthode des 
corps proches, qui n'est pas gŽnŽralisable ˆ d'autres corps. En revanche la mŽthode utilisŽe dans le deuxime cas (cf. \ref{le cas o les N sont Žgaux ˆ 1 ou -1}) est sans doute gŽnŽralisable, 
du moins si les constantes $N_{\alpha,\beta}$ des formules de commutateurs de Chevalley sont toutes inversibles dans $F$ (cf. \ref{remarque sur les constantes N}\,(iii)) ; 
et sinon, elle pourrait fournir des contres-exemples. 
}
\end{remark}

Observons que l'injectivitŽ de \textit{tous} les $F$-morphismes $\eta_{\lambda,u}(-i\mathpvg F)$ ($i=1,\ldots ,k-1$) Žquivaut ˆ leur surjectivitŽ: 

% lemme
\begin{lemma}\label{injectivitŽ=surjectivitŽ}
Soient $u \in \UF\smallsetminus \{1\}$, $\lambda\in \Lambda_{F,u}^{\rm opt}$ et $k=m_u(\lambda)>0$. Les conditions suivantes sont Žquivalentes:
\begin{enumerate}
\item[(i)] pour $i=1,\ldots ,k-1$, le morphisme $\eta_{\lambda,u}(-i;F)$ est injectif;
\item[(ii)] pour $i=1,\ldots ,k-1$, le morphisme $\eta_{\lambda,u}(-i;F)$ est surjectif;
\item[(iii)] pour $i=1,\ldots ,k-1$, le morphisme $\eta_{\lambda,u}(-i;F)$ est un isomorphisme.
\end{enumerate}
Si les trois conditions Žquivalentes ci-dessus sont vŽrifiŽes, on a en particulier
$$\dim_F(\mathfrak{g}_\lambda(-i;F))= \dim_F(\mathfrak{g}_\lambda(k-i;F))\quad \hbox{pour} \quad i=1,\ldots,k-1\ptf$$
\end{lemma}

% preuve
\begin{proof}
Pour $i\in \mbb{Z}$, posons $d_i= \dim_F(\mathfrak{g}_\lambda(i\mathpvg F))$. Pour $i\neq 0$, notons $\ES{R}_\lambda(i)$ l'ensemble des $\alpha\in \ES{R}$ 
tels que $\langle \alpha, \lambda \rangle =i$; on a donc $d_i= \sum_{\alpha \in \ES{R}_\lambda(i)} \dim_F(\mathfrak{u}_\alpha(F))$ o $\mathfrak{u}_\alpha\subset \mathfrak{g}$ 
est le sous-espace radiciel associŽ ˆ $\alpha$. Puisque $\ES{R}_\lambda(-i)= \ES{R}_{-\lambda}(i)$ et $\dim_F(\mathfrak{u}_\alpha(F))= \dim_F(\mathfrak{u}_{-\alpha}(F))$, on a $d_{-i}=d_i$. Par consŽquent 
$$\sum_{i=1}^{k-1}d_{-i}= \sum_{i=1}^{k-1} d_{k-i}\ptf$$ 
Cela entra"ne que si l'une des deux premires conditions de l'ŽnoncŽ est vŽrifiŽe, alors l'autre l'est aussi. 
\end{proof}

La proposition suivante est la version nilpotente de \ref{proposition noyau}:

% proposition
\begin{proposition}\label{proposition L(noyau)}
(On suppose que l'hypothse \ref{hyp bonnes F-strates} est vŽrifiŽe pour $\mathfrak{g}$.) Soient $\lambda \in \check{X}(A_0)$, $k\in \mbb{N}^*$ et $Y\in \mathfrak{g}_\lambda(k\mathpvg F)$ tels que 
$\lambda \in \Lambda_{F,Y}^{\rm opt}$. Pour $i=1,\ldots ,k-1$, on a 
$$\mathfrak{g}^Y(F)\cap \mathfrak{g}_\lambda(-i;F)= \{0\};$$
en d'autres termes, le morphisme $[Y,\cdot ]:\mathfrak{g}_\lambda(-i;F) \rightarrow \mathfrak{g}_{\lambda}(k-i;F)$ est injectif. 
\end{proposition}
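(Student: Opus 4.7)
The plan is to deduce Proposition \ref{proposition L(noyau)} from its unipotent counterpart \ref{proposition noyau} via the $A_0$-equivariant $F$-isomorphism $j_0: \mathfrak{u}_0 \to U_0$ of section \ref{de G � Lie(G)}. First, the Lie-algebra analogue of Lemma \ref{injectivit�=surjectivit�} (same dimension computation, using $\ES{R}_\lambda(-i) = \ES{R}_{-\lambda}(i)$ and $\dim_F\mathfrak{u}_\alpha = \dim_F\mathfrak{u}_{-\alpha}$ to get $d_{-i}=d_i$ for $d_j := \dim_F\mathfrak{g}_\lambda(j,F)$) reduces us to proving injectivity. Since the assertion is invariant under simultaneous $G(F)$-conjugation of $(Y,\lambda)$ and every $F$-strate in $\NF$ meets a standard $F$-lame, I may further assume that $Y\in\mathfrak{u}_0(F)\cap\mathfrak{g}_\lambda(k,F)$ is in standard position.

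Setting $u := j_0(Y)\in U_0(F)$, the $A_0$-equivariance of $j_0$ combined with $\lambda\in\check{X}(A_0)$ guarantees that $j_0$ respects the graded filtration $(\mathfrak{g}_{\lambda,i})_{i\geq 1}$, hence $u\in G_{\lambda,k}(F)\smallsetminus G_{\lambda,k+1}(F)$; by \cite[3.4.5]{L} one has $\bs{\Lambda}_{F,u}=\bs{\Lambda}_{F,Y}$, so $\lambda\in\Lambda_{F,u}^{\mathrm{opt}}$ and $m_u(\lambda)=k$. Proposition \ref{proposition noyau} applied to $u$ then yields the injectivity of $\eta_{\lambda,u}(-i,F):\mathfrak{g}_\lambda(-i,F)\to\mathfrak{g}_\lambda(k-i,F)$ for $i=1,\ldots,k-1$.

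The final task is to identify $\eta_{\lambda,u}(-i,F)$ with the bracket map $[Y,\cdot]$. The vector $F$-group $G_\lambda(k,F) := G_{\lambda,k}(F)/G_{\lambda,k+1}(F)$ is canonically isomorphic to $\mathfrak{g}_\lambda(k,F)$, and $\eta_{\lambda,u}(-i,F)$ depends only on the image $\bar u\in G_\lambda(k,F)$ of $u$, bilinearly in $(\bar u,X)$. For $i\geq 1$ and $k\geq 1$, the higher-order contributions to $(\mathrm{Ad}_u-1)(X)$ all land in $\mathfrak{g}_{\lambda,2k-i}(F)\subset\mathfrak{g}_{\lambda,k-i+1}(F)$, so only the \emph{linear} term in $\bar u$ survives modulo $\mathfrak{g}_{\lambda,k-i+1}(F)$. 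With $j_0$ normalised root-by-root on $\mathfrak{u}_0\cap\mathfrak{g}_\lambda(k)$ so that the identification $G_\lambda(k,F)\simeq\mathfrak{g}_\lambda(k,F)$ sends $\bar u$ to $Y$, one then obtains $\eta_{\lambda,u}(-i,F)(X)=[Y,X]$ in $\mathfrak{g}_\lambda(k-i,F)$, and the two injectivities are equivalent.

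The main obstacle is this last identification in positive characteristic, where the absence of $\exp$ and the fact that $j_0$ is only a variety isomorphism (not a group homomorphism) force a careful root-by-root analysis to justify both the compatibility $\bar u = Y$ and the vanishing of higher-order terms in the canonical vector group structure on $G_\lambda(k,F)$. A cleaner alternative would be to imitate the proofs of \ref{le cas o� les N sont �gaux � 1 ou -1} and \ref{le cas d'un corps local} directly on $\mathfrak{g}$, bypassing $j_0$ entirely, since the linear operator $[Y,\cdot]$ is inherently easier to manipulate than $\mathrm{Ad}_u - 1$; such a direct argument would also make the statement of \ref{proposition L(noyau)} logically prior to \ref{proposition noyau}, in line with the philosophy (recalled in \ref{de G � Lie(G)}) that nilpotent results propagate to unipotent ones through $j_0$.
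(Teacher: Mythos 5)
Your argument is circular. You propose to deduce \ref{proposition L(noyau)} from \ref{proposition noyau} via $j_0$, but the paper's logic runs in exactly the opposite direction: Lemme \ref{r�duction au cas d�ploy� L} establishes that \ref{proposition L(noyau)} (in the split case) \emph{implies} \ref{proposition noyau}, using precisely the congruence ${\rm Ad}_u(Z)-Z\equiv [Y(k),Z(-i)]\ (\mathrm{mod}\ \mathfrak{g}_{\lambda,k-i+1}(F))$ with $Y=j_0^{-1}(u)$ that you reconstruct. The unipotent statement is never proved independently; it is only obtained as a corollary of the nilpotent one. So invoking \ref{proposition noyau} as an input to prove \ref{proposition L(noyau)} establishes nothing. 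Your transfer argument does correctly show that the two propositions are equivalent (your justification that the higher-order terms of $\mathrm{Ad}_u-1$ land in $\mathfrak{g}_{\lambda,2k-i}\subset\mathfrak{g}_{\lambda,k-i+1}$ is sound), but an equivalence is not a proof of either statement.

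The actual content of the proof is entirely missing from your proposal. After the reduction to the split case and to standard position, the paper proves \ref{proposition L(noyau)} \emph{directly on} $\mathfrak{g}$, in three special cases: (a) $Y$ separable (Lemme \ref{le cas s�parable}, covering $p=1$ and $p$ tr�s bon); (b) all $N_{\alpha,\beta}=\pm1$, via an argument lifting a putative nonzero kernel element to a relation over $\mbb{Z}$, transferring to $\mbb{C}$ by Seshadri's theorem and the Kirwan--Ness criterion, and deriving a contradiction (\ref{le cas o� les N sont �gaux � 1 ou -1}); (c) $F$ local of characteristic $p>1$, via the method of close fields (\ref{le cas d'un corps local}). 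The "cleaner alternative" you mention in your last paragraph --- imitating \ref{le cas o� les N sont �gaux � 1 ou -1} and \ref{le cas d'un corps local} directly on $\mathfrak{g}$ --- is not an alternative: those subsections are already stated and proved for $\lambda$, $k$ and $Y\in\mathfrak{g}_\lambda(k;F)$, i.e.\ they \emph{are} the proof of \ref{proposition L(noyau)}, and you do not carry out any of it. Note also that the proposition is only claimed in these special cases (the general case is left as conjectural), a restriction your proposal does not engage with.
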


Lˆ encore, on ne dŽmontrera pas \ref{proposition L(noyau)} en gŽnŽral mais seulement dans les cas particuliers indiquŽs plus haut. On a aussi la version nilpotente de \ref{injectivitŽ=surjectivitŽ}:

% lemme
\begin{lemma}\label{L(injectivitŽ)=L(surjectivitŽ)}
Soient $\lambda \in \check{X}(A_0)$, $k\in \mbb{N}^*$ et $Y\in \mathfrak{g}_\lambda(k;F)$ tels que 
$\lambda \in \Lambda_{F,Y}^{\rm opt}$. 
Les conditions suivantes sont Žquivalentes:
\begin{enumerate}
\item[(i)] pour $i=1,\ldots ,k-1$, on a $\mathfrak{g}^Y(F)\cap \mathfrak{g}_\lambda(-i;F)= \{0\}$;
\item[(ii)] pour $i=1,\ldots ,k-1$, on a $[Y,\mathfrak{g}_\lambda(-i;F)]= \mathfrak{g}_\lambda(k-i;F)$;
\item[(iii)] pour $i=1,\ldots ,k-1$, le morphisme  $[Y,\cdot ]:\mathfrak{g}_\lambda(-i;F) \rightarrow \mathfrak{g}_{\lambda}(k-i;F)$ est un isomorphisme.
\end{enumerate}
\end{lemma}

% preuve
\begin{proof}
Elle est identique ˆ celle de \ref{L(injectivitŽ)=L(surjectivitŽ)}.
\end{proof}

Pour dŽmontrer \ref{proposition noyau}, nous verrons qu'il suffit de dŽmontrer \ref{proposition L(noyau)}, ou ce qui revient au mme 
\ref{L(injectivitŽ)=L(surjectivitŽ)}\,(ii), dans le cas o $G$ est $F$-dŽployŽ. Avant cela, prouvons \ref{proposition L(noyau)} dans le cas facile suivant: 

% lemme
\begin{lemma}\label{le cas sŽparable}
Soient $\lambda \in \check{X}(A_0)$, $k\in \mbb{N}^*$ et $Y\in \mathfrak{g}_\lambda(k;F)$ tels que 
$\lambda \in \Lambda_{F,Y}^{\rm opt}$. Si $Y$ est sŽparable, \cad si le morphisme 
$G\rightarrow \mathrm{Ad}_G(Y),\, g \mapsto \mathrm{Ad}_g(Y)$ est sŽparable (e.g. si $p=1$), alors 
$$\mathfrak{g}^Y(F) \cap \mathfrak{g}_\lambda(-i;F)=\{0\}\quad\hbox{pour tout}\quad i\in \mbb{N}^*\ptf$$
\end{lemma}

% preuve
\begin{proof}
Supposons $Y$ sŽparable. Dans ce cas on a l'ŽgalitŽ $\mathfrak{g}^Y = {\rm Lie}(G^Y)$. 
Comme d'autre part (d'aprs \ref{descente sŽparable}) on a l'inclusion $G^Y(F^{\rm s\acute{e}p})\subset P_{F^{\rm s\acute{e}p},Y}=P_\lambda(F^{\rm s\acute{e}p})$, on en dŽduit l'inclusion 
$\mathfrak{g}^Y \subset \mathfrak{p}_\lambda$ (cf. le dŽbut de \ref{ŽnoncŽ}). D'o le lemme.
\end{proof}

Soient $u \in \UF\smallsetminus \{1\}$, $\lambda\in \Lambda_{F,u}^{\rm opt}$ et $k=m_u(\lambda)>0$. Supposons que l'on a dŽmontrŽ \ref{proposition noyau} dans le cas o le groupe $G$ est $F$-dŽployŽ. 
Soit $E/F$ une sous-extension finie de $F^{\rm s\acute{e}p}/F$ dŽployant 
$G$. Puisque $u\in \UU_E$ et $\lambda \in \Lambda^{\rm opt}_{E,u}$ (\ref{descente sŽparable}), pour $i=1,\ldots ,k-1$, le morphisme 
$$\eta_{\lambda,u}(-i;E): \mathfrak{g}_\lambda(-i;E)\rightarrow \mathfrak{g}_{\lambda,k-i}\mathpvg E)/\mathfrak{g}_{\lambda, k-i+1}(E)= \mathfrak{g}_\lambda(k-i\mathpvg E)$$ est injectif; il induit 
par restriction le morphisme $\eta_{\lambda,u}(-i;F)$, qui est lui aussi injectif. 

On peut donc supposer que $G$ est $F$-dŽployŽ. 
On peut aussi supposer $u$ en position standard et $\lambda \in \check{X}(A_0)$, \cad $
\Lambda_{F,u}^{\rm opt}\cap \check{X}(A_0)=\{\lambda\}$.  

Rappelons que le $F$-isomorphisme $j_0: \mathfrak{u}_0 \rightarrow U_0$ compatible ˆ l'action de $A_0$ a ŽtŽ construit via le choix d'ŽlŽments $E_\alpha \in \mathfrak{u}_\alpha(F) \smallsetminus \{0\}$ 
pour $\alpha \in \ES{R}$ (cf. \cite[3.4]{L}\footnote{On a supprimŽ les \guill{tildes} de loc.~cit. car ici $G$ est supposŽ $F$-dŽployŽ. Observons aussi que seuls les $E_\alpha$ pour $\alpha\in \ES{R}^+$ sont utilisŽs dans la construction de $j_0$.}); o\footnote{Si $2\alpha \notin \ES{R}_{A_0}$, on a $\mathfrak{u}_\alpha = {\rm Lie}(U_\alpha)$; si $2\alpha \in \ES{R}_{A_0}$, on a 
$\mathfrak{u}_\alpha = {\rm Lie}(U_{(\alpha)}/U_{2\alpha})$ o $U_{(\alpha)}$ est le sous-groupe unipotent de $G$ correspondant ˆ $(\alpha)=\{\alpha,2\alpha\}$.} $$\mathfrak{u}_\alpha= \{X\in \mathfrak{g}\,\vert \, {\rm Ad}_a(X)= a^\alpha X,\,\forall a \in A_0\}\ptf$$ 
On peut imposer de plus que ces $E_\alpha$ 
soient obtenus par extension des scalaires de $\mbb{Z}$ ˆ $F$ ˆ partir d'une \guill{base de Chevalley} 
$\{H_\alpha\,\vert \, \alpha \in \Delta\} \cup \{E_\alpha\,\vert \, \alpha \in \ES{R}\}$ de 
${\rm Lie}(\scrG_{\rm der})(\mbb{Z})$: 
\begin{itemize}
\item pour tous $\alpha,\beta \in \Delta$, $[H_\alpha, H_{\beta}]=0$;
\item pour tout $\alpha\in \Delta$ et tout $\beta\in \ES{R}$, $[H_\alpha ,E_\beta ] = \beta(H_\alpha) E_\beta = 2 \frac{(\beta,\alpha)}{(\alpha,\alpha)}E_\beta$;
\item pour tout $\alpha\in \ES{R}$, $[E_\alpha,E_{-\alpha}]$ est une combinaison linŽaire ˆ coefficients dans $\mathbbm{Z}$ des $H_{\beta}$ ($\beta\in \Delta$); 
\item si $\alpha,\,\beta \in \ES{R}$ sont deux racines non proportionnelles, et si $\{ j\alpha + \beta \,\vert \, j \in [-q,r]\}$ est la 
$\alpha$-cha"ne de racines dŽfinie par $\beta$, alors $$[E_\alpha,E_\beta]=\left\{\begin{array}{ll}
0 & \hbox{si $r=0$}\\
 \pm (q+1)E_{\alpha+ \beta}&\hbox{si $\alpha + \beta \in \ES{R}$} \end{array}\right..$$
 \end{itemize}
 Le point crucial est que les relations sont ˆ coefficients dans $\mbb{Z}$. On peut considŽrer l'enveloppe linŽaire de cette base 
sur n'importe quel corps (e.g. $F$ ou $\mbb{F}_p$, mais aussi 
les corps algŽbriquement clos $\mbb{C}$, $\overline{\mbb{Q}}_p$ ou $\overline{\mbb{F}}_p$); les relations ci-dessus la munissent d'une structure d'algbre de Lie sur le corps en question. 

Les relations peuvent tre trs diffŽrentes d'un corps ˆ l'autre. En 
particulier pour $\alpha, \,\beta \in \ES{R}$ tel que $\alpha + \beta \in \ES{R}$, l'entier $N_{\alpha,\beta}$ dŽfini par 
$$[E_\alpha,E_\beta]\bydef N_{\alpha,\beta} E_{\alpha+\beta}$$ peut tre nul dans $F$ 
(si $p>1$ et $p$ divise $N_{\alpha,\beta}$).

% remarque
\begin{remark}\label{remarque sur les constantes N}
\textup{
\begin{enumerate}
\item[(i)]  Les entiers $N_{\alpha,\beta}$ appartiennent ˆ $\{0, \pm 1,\pm 2 , \pm 3\}$. Si de plus $\ES{R}$ n'a aucun facteur irrŽductible 
de type ${\bf G}_2$, ils appartiennent ˆ $\{0,\pm 1,\pm 2\}$. En effet ${\bf G}_2$ est le seul systme de racines (rŽduit et irrŽductible) 
pour lequel il existe deux racines $\alpha$ et $\beta$ 
telles que l'ensemble des racines de la forme $i\alpha + j\beta$ avec $i,\, j \in \mbb{N}$ soit de cardinal $>2$.
\item[(ii)]Si  $\alpha,\, \beta ,\, \gamma $ sont trois racines d'une composante irrŽductible de $\ES{R}$ 
dont toutes les racines sont de mme longueur (\cad de type ${\bf A}_l$, ${\bf D}_l$  
ou de type exceptionnel ${\bf E}_*$) telles que $\alpha + \beta = \gamma$, alors $N_{\alpha,\beta}= \pm 1$. 
\item[(iii)]Supposons que $G$ soit (absolument) quasi-simple, i.e. que $\ES{R}$ soit irrŽductible. D'aprs (ii), si $G$ est de type ${\bf A}_l$, ${\bf D}_l$ ou de type exceptionnel ${\bf E}_*$, 
tous les $N_{\alpha,\beta}$ sont Žgaux ˆ $\pm 1$, donc en particulier inversibles dans $F$. Les seuls cas o les $N_{\alpha,\beta}$ peuvent ne pas tre inversibles dans $F$ sont les suivants: 
\begin{itemize}
\item $p=2$ et $G$ est de type ${\bf B}_l$, ${\bf C}_l$, ${\bf F}_4$ ou ${\bf G}_2$;
\item $p=3$ et $G$ est de type ${\bf G}_2$.
\end{itemize}
\end{enumerate}}
\end{remark}

Posons $$Y\bydef j_0^{-1}(u)\in \mathfrak{g}_{\lambda ,k}(F)\ptf$$ 
Pour $Z\in \mathfrak{g}$ et $r\in \mbb{Z} $, notons $Z(r)$ la composante de $Z$ sur $\mathfrak{g}_\lambda(r)$. Si $Z\in \mathfrak{g}_{\lambda,-i}(F)$ pour un 
$i\in \mbb{Z}$, on a  $${\rm Ad}_u(Z)- Z \equiv [Y(k),Z(-i)] \quad ({\rm mod}\; \mathfrak{g}_{\lambda, k-i+1}(F))\ptf$$ 
D'aprs \cite[3.4.5, 2.7.7]{L}, le co-caractre $\lambda$ est $(F,Y(k))$-optimal; et puisque par hypothse $\lambda$ est 
$(\overline{F},u)$-optimal (i.e.  $\lambda\in \check{X}_F(G)\cap \Lambda_{F,u}^{\mathrm{opt}}$), il est $(\overline{F},Y(k))$-optimal. En 
remplaant $Y$ par $Y(k)$ et $X\in \mathfrak{g}_{\lambda,-i}(F)$ par $X(-i)$, on obtient 
le:

\begin{lemma}\label{rŽduction au cas dŽployŽ L}
La proposition \ref{proposition L(noyau)} dans le cas o $G$ est $F$-dŽployŽ implique la proposition \ref{proposition noyau}.
\end{lemma}

% remarque
\begin{remark}
\textup{On s'est ramenŽ par descente sŽparable au cas o $G$ est $F$-dŽployŽ puis on est passŽ ˆ l'algbre de Lie gr‰ce au $F$-isomorphisme 
$j_0: \mathfrak{u}_0\rightarrow U_0$. De la mme manire (par descente sŽparable), si les ŽnoncŽs \ref{proposition L(noyau)} et 
\ref{L(injectivitŽ)=L(surjectivitŽ)} sont vrais sur $E$ pour une sous-extension (finie ou pas) $E/F$ de $F^{\rm s\acute{e}p}/F$ 
dŽployant $G$, alors ils sont vrais sur $F$. 
}\end{remark}

D'aprs \ref{rŽduction au cas dŽployŽ L} et \ref{le cas sŽparable}, si $p=1$ ou $p>1$ est \textit{trs bon} pour $G$, la proposition \ref{proposition noyau} est dŽmontrŽe. En effet dans ce cas toutes $G$-orbites nilpotentes de $\mathfrak{g}$ 
sont sŽparables: c'est Žvident si $p=1$; et si $p>1$, c'est un rŽsultat de Richardson-Springer-Steinberg (cf. Jantzen \cite[2.5, 2.6]{J}). Il nous reste donc ˆ prouver 
\ref{proposition L(noyau)} dans le cas o $p>1$ n'est pas trs bon pour $G$. On a vu que l'on peut supposer $G$ dŽployŽ sur $F$. 
On peut aussi supposer que $G$ est absolument simple de type adjoint (cf. \cite[3.2.6]{L}). 

% remarque
\begin{remark}\label{contre-exemple}
\textup{
Si $p>1$ n'est pas trs bon pour $G$, la condition $i<k$ est essentielle pour que la proposition \ref{proposition noyau} soit vraie. On suppose que $G$ est $F$-dŽployŽ. 
Soient $Y= \sum_{\alpha \in \Delta} E_\alpha$ et 
$u=j_0(Y)\in U_0(F)$. Alors $u$ est un ŽlŽment unipotent rŽgulier de $G$ au sens o il est contenu dans 
un unique sous-groupe de Borel de $G$, en l'occurence $P_0$. Soit $\mu =\frac{1}{2} \sum_{\alpha \in \ES{R}^+} \check{\alpha}\in 
\check{X}(A_0)_\mbb{Q} $. D'aprs Hesselink \cite[8.6\,(b)]{H1}, $\mu$ appartient ˆ $ \bs{\Lambda}_{F,u}$. Soit $k$ 
l'unique entier $>0$ tel que $\lambda = k \mu$ appartienne ˆ $\Lambda_{F,u}^{\rm opt}$. Soit $X=\sum_{\alpha \in \Delta} c_\alpha E_{-\alpha}$ 
avec $c_\alpha \in F$. Alors $Y\in \mathfrak{g}_\lambda(k;F)$, $X\in \mathfrak{g}_{\lambda, -k}(F)$ et 
$$ {\rm Ad}_u(X)-X\equiv  \sum_{\alpha\in \Delta} c_\alpha [E_\alpha, E_{-\alpha}] \quad ({\rm mod}\; \mathfrak{g}_{\lambda,1}(F))\ptf$$ 
Pour $\alpha\in \Delta$, l'ŽlŽment $ [E_\alpha , E_{-\alpha}]$ co\"{\i}ncide avec $H_\alpha ={\rm Lie}(\check{\alpha})(1) \in {\rm Lie}(A_0)(F)$. 
L'inclusion $\check{\ES{R}} \subset \check{X}(A_0)$ donne dualement un morphisme de $\mbb{Z} $-modules 
$$\eta: X(A_0) \rightarrow {\rm Hom}_\mbb{Z} (\mbb{Z}  (\check{\ES{R}}), \mbb{Z} )\, , \;  
\chi \mapsto  (\check{\alpha} \mapsto \langle \chi, \check{\alpha} \rangle )\ptf$$ Si $p>1$ divise l'ordre du conoyau ${\rm coker} (\eta)$, 
les $H_\alpha$ ($\alpha \in \Delta$) ne sont pas linŽairement indŽpendants sur $F$ (par exemple: si $G={\rm PGL}_n$ et $p$ divise $n$). En ce cas 
il existe un $X\in \mathfrak{g}_\lambda(-k;F)\smallsetminus \{0\}$ tel que ${\rm Ad}_u(X)-X \in \mathfrak{g}_{\lambda,1}(F)$. 
}
\end{remark}

\subsection{Constructions prŽliminaires et rappels}\label{constructions prŽliminaires}
Le groupe $G$ est supposŽ $F$-dŽployŽ. 
On peut supposer que $G= \scrG\times_{\mbb{Z}}F$ et $A_0 = \scrA_0\times_{\mbb{Z}}F$ o $\scrG$ est un $\mbb{Z}$-schŽma en groupes rŽductif (connexe)  
de Chevalley-Demazure et $\scrA_0$ est un tore maximal de $\scrG$. Chaque racine $\alpha \in \ES{R}$ provient par extension 
des scalaires d'un morphisme de $\mbb{Z}$-schŽmas en groupes $\alpha: \scrA_0 \rightarrow G_{{\rm m}/\mbb{Z}}$, i.e appartient ˆ $X(\scrA_0)= \mathrm{Hom}(\scrA_0,\mbb{G}_{\mathrm{m}/\mbb{Z}})$. 
De mme, chaque co-caractre $\lambda\in \check{X}(A_0)$ appartient ˆ $ \check{X}(\scrA_0)={\rm Hom}_{\mbb{Z}}(\mathbb{G}_{{\rm m}/\mbb{Z}},\scrA_0)$. L'identification 
$\check{X}(A_0)= \check{X}(\scrA_0)$ induit une identification $\check{X}(A_0)_{\mbb{Q}}= \check{X}(\scrA_0)_{\mbb{Q}}$.

Pour $\lambda\in \check{X}(A_0)$, $i\in \mbb{N}^*$ et une $\mbb{Z}$-algbre $A$, on note $\mathfrak{g}_\lambda(i\mathpvg A)$ le $A$-module 
libre de type fini dŽfini par $$\mathfrak{g}_\lambda(i\mathpvg A)= \bigoplus_{\alpha \in \ES{R}_\lambda(i)} AE_\alpha\quad \hbox{avec}\quad \ES{R}_\lambda(i)= \{\alpha\in \ES{R}\,\vert \, \langle \alpha,\lambda\rangle = i\}\ptf$$ 
Ces $A$-modules libres de type fini dŽfinissent un espace affine $\mathfrak{g}_\lambda(i)_{\mbb{Z}}\simeq_{\mbb{Z}}\mbb{A}_{\mbb{Z}}^{n}$ avec $n=\vert \ES{R}_\lambda(i)\vert$; 
et le $F$-espace affine $\mathfrak{g}_\lambda(i)$ co\"{\i}ncide avec $\mathfrak{g}_\lambda(i)_{\mbb{Z}} \times_{\mbb{Z}} F$. 

Pour $\lambda\in \check{X}(A_0)=\check{X}(\scrA_0)$, notons $\mathscr{M}_\lambda$ le centralisateur schŽmatique de $\lambda$ dans $\mathscr{G}$. On a  
$M_\lambda = \mathscr{M}_\lambda \times_{\mbb{Z}} F$ et pour $i\in \mbb{N}^*$, l'action $F$-linŽaire de $M_\lambda$ sur $\mathfrak{g}_\lambda(i)$ provient par le 
changement de base $\mbb{Z}\rightarrow F$ d'une action $\mbb{Z}$-linŽaire de $\mathscr{M}_\lambda$ sur $\mathfrak{g}_\lambda(i)_{\mbb{Z}}$. Soit $T$ un tore maximal de $M_\lambda$. Notons 
$T^\lambda$ le sous-tore de $T$ dŽfini par $$T^\lambda = \langle \mathrm{Im}(\mu) \,\vert \, \mu \in \check{X}(T),\, (\mu,\lambda)=0 \rangle$$ et  
$M_\lambda^\perp$ le sous-groupe fermŽ distinguŽ de $M_\lambda$ dŽfini par $$ M_\lambda^\perp = \langle T^\lambda , (M_\lambda)_\mathrm{der}\rangle\ptf$$ 
Le groupe $M_\lambda^\perp$ est rŽductif connexe et il ne dŽpend pas du choix de $T$. En particulier il est dŽfini sur $F$ (on peut prendre $T=A_0$). D'aprs \cite[4.3]{CP}, 
le $F$-sous-groupe fermŽ $M_\lambda^\perp$ de $M_\lambda$ provient par le changement de base $\mbb{Z}\rightarrow F$ d'un sous-schŽma en groupes fermŽ (rŽductif connexe) 
$\scrM_\lambda^\perp $ de $\mathscr{M}_\lambda$: on a $M_\lambda^\perp= \scrM_\lambda^\perp\times_{\mbb{Z}} F$. 

Soient $\lambda\in \check{X}(A_0)$ et $k\in \mbb{N}^*$. On suppose que $\lambda$ est primitif. Pour un ŽlŽment $Y\in \mathfrak{g}_\lambda(k\mathpvg F)$, le \textit{critre de Kirwan-Ness rationnel} (cf. \cite[2.8.1]{L}) 
dit que $\lambda$ est $(F,Y)$-optimal (relativement ˆ l'action adjointe de $G(F)$) si et seulement si $Y$ est \textit{$(F,M_\lambda^\perp)$-semi-stable}, cf. \ref{gŽnŽ et descente}. 
Pour $F=\overline{F}$, on obtient le critre de Kirwan-Ness habituel (i.e. gŽomŽtrique). 

Puisque l'action $F$-linŽaire de $M_\lambda^\perp$ sur $\mathfrak{g}_\lambda(k)$ provient par le changement 
de base $\mbb{Z}\rightarrow F$ d'une action $\mbb{Z}$-linŽaire du schŽma en groupes $\scrH= \scrM_\lambda^\perp$ sur l'espace affine $\scrV=\mathfrak{g}_\lambda(k)_{\mbb{Z}}\;(\simeq_{\mbb{Z}}\mbb{A}_{\mbb{Z}}^n)$, on 
dispose aussi du thŽorme de Seshadri \cite{S}: il existe un sous-schŽma ouvert $\scrV^{\mathrm{ss}}$ de $\scrV$ 
tel que pour tout corps commutatif $\bs{k}$ algŽbriquement clos, $\scrV^{\rm ss}(\bs{k})$ soit l'ensemble des ŽlŽments 
$\scrH(\bs{k})$-semi-stables de $\scrV(\bs{k})=\scrV(\mbb{Z})\otimes_{\mbb{Z}}\bs{k}$. 
D'aprs le critre de Kirwan-Ness gŽomŽtrique, un ŽlŽment $\bs{Y}$ de $\scrV(\bs{k})$ appartient ˆ $\scrV^{\rm ss}(\bs{k})$ si et seulement si 
le co-caractre $\lambda$ --- qui est dŽfini sur $\mbb{Z}$ et donc {\it a fortiori} sur $\bs{k}$ --- est $(\bs{k},\bs{Y})$-optimal 
(relativement ˆ l'action adjointe de $\scrH(\bs{k})$). 

%%%%%%%%%%%%%%%%%%%%%%
\subsection{Le cas o les $N_{\alpha,\beta}$ sont tous Žgaux ˆ $\pm1$ ($p>1$)}\label{le cas o les N sont Žgaux ˆ 1 ou -1} Le groupe $G$ est supposŽ $F$-dŽployŽ, avec $p>1$. 
On suppose toujours que l'hypothse \ref{hyp bonnes F-strates} est vŽrifiŽe pour $V=G$ muni de l'action par conjugaison. On suppose de plus que pour toutes les racines $\alpha,\,\beta\in \ES{R}$ telles que $\alpha+\beta\in \ES{R}$, l'entier $N_{\alpha,\beta}$ est Žgal ˆ $\pm 1$; \cad 
que tout facteur (absolument) quasi-simple de $G$ est de type ${\bf A}_l$, ${\bf D}_l$ ou de type exceptionnel ${\bf E}_*$ (cf. \ref{remarque sur les constantes N}). 

Soient $\lambda$, $k$ et $Y$ comme dans l'ŽnoncŽ de \ref{proposition L(noyau)}. On peut supposer que $Y$ est en position standard et 
que $\lambda\in \check{X}(A_0)$; autrement dit $P_\lambda \supset P_0$ et $\check{X}(A_0)\cap \Lambda_{F,Y}^{\mathrm{opt}}= \{\lambda\}$. 
Fixons un entier $i\in \{1,\ldots ,k-1\}$ et un ŽlŽment $X\in \mathfrak{g}_\lambda(-i;F)$ tel que $[X,Y]=0$. On veut prouver que $X=0$. 
On Žcrit $$X= \sum_{\alpha \in \ES{R}_\lambda(-i)}a_\alpha E_\alpha\quad \hbox{et}\quad 
Y= \sum_{\beta \in \ES{R}_\lambda(k)}b_\beta E_\beta\,.$$ On note $\ES{R}(X)$ le sous-ensemble de $\ES{R}_\lambda(-i)$ 
formŽ des racines $\alpha$ telles que $a_\alpha \neq 0$. On dŽfinit de manire analogue le sous-ensemble $\ES{R}(Y)$ de $\ES{R}_\lambda(k)$.
Pour $\gamma\in \ES{R}$, on note $C(\gamma)= C(X,Y\mathpvg \gamma)$ l'ensemble des $(\alpha,\beta)\in \ES{R}(X)\times \ES{R}(Y) $ 
tels que $\alpha + \beta = \gamma$. Pour que $C(\gamma)$ soit non vide, il faut bien sžr que $\gamma$ appartienne ˆ $\ES{R}_\lambda(k-i)$. 
On a donc 
$$[X,Y]= \sum_{\gamma \in \ES{R}_\lambda(k-i)} 
\left(\sum_{(\alpha,\beta)\in C(\gamma)} c_{\alpha,\beta} N_{\alpha,\beta} \right) 
E_\gamma\quad \hbox{avec} \quad c_{\alpha,\beta}=a_\alpha b_\beta\in F^\times\,.$$ 
Puisque $[X,Y]=0$, on a $$\sum_{(\alpha,\beta)\in C(\gamma)} c_{\alpha,\beta} N_{\alpha,\beta}=0
\quad\hbox{pour tout}\quad \gamma\in \ES{R}_\lambda(k-i)\,.$$

% lemme
\begin{lemma}\label{si C=0 alors X=0}
Si tous les ensembles $C(\gamma)$ sont vides, alors $X=0$.
\end{lemma}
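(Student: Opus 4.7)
Le plan est de raisonner par l'absurde : on suppose $X\neq 0$ et on fixe $\alpha_0\in \ES{R}(X)$. On construira un co-caract\`ere rationnel $\mu=\lambda+\epsilon\check{\alpha}_0\in \check{X}_F(G)_\mbb{Q}$ (avec $\epsilon>0$ rationnel petit) tel que $\rho_Y(\mu)>\rho_Y(\lambda)$, ce qui contredira l'optimalit\'e de $\lambda$.

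La premi\`ere \'etape est purement combinatoire. L'hypoth\`ese \guill{tous les $C(\gamma)$ vides} signifie que $\alpha_0+\beta\notin \ES{R}$ pour tout $\beta\in \ES{R}(Y)$. Par ailleurs $\langle \alpha_0+\beta,\lambda\rangle=k-i>0$ (car $i<k$), donc $\alpha_0+\beta\neq 0$. On appliquera alors la propri\'et\'e standard des cha\^{\i}nes de racines (\guill{si $\gamma+\delta\notin \ES{R}\cup\{0\}$, alors $(\gamma,\delta)\geq 0$}) pour obtenir $\langle \beta,\check{\alpha}_0\rangle\geq 0$ pour tout $\beta\in \ES{R}(Y)$.

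On en tirera, d'une part, que $\langle \beta,\mu\rangle=k+\epsilon\langle \beta,\check{\alpha}_0\rangle\geq k\geq 1$ pour tout $\beta\in \ES{R}(Y)$ ; donc $Y\in \mathfrak{u}_\mu$ et $m_Y(\mu)\geq k$. D'autre part, la $W$-invariance de la forme $(\cdot,\cdot)$ et la d\'ecomposition spectrale de la r\'eflexion $s_{\alpha_0}$ (espace propre $-1$ : la droite $\mbb{Q}\check{\alpha}_0$ ; espace fixe : l'hyperplan $\{\langle \alpha_0,\cdot\rangle=0\}$ ; ces deux espaces \'etant orthogonaux pour la forme) entra\^{\i}nent la proportionnalit\'e $(\check{\alpha}_0,\cdot)=c\langle \alpha_0,\cdot\rangle$ avec $c=(\check{\alpha}_0,\check{\alpha}_0)/2>0$. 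On aura donc $(\check{\alpha}_0,\lambda)=-ci<0$ et
$$\|\mu\|^2=\|\lambda\|^2-2\epsilon ci+\epsilon^2\|\check{\alpha}_0\|^2<\|\lambda\|^2$$
pour $\epsilon$ assez petit. La combinaison des deux estim\'ees donne $\rho_Y(\mu)>k/\|\lambda\|=\rho_Y(\lambda)$, ce qui est la contradiction cherch\'ee.

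Le point d\'elicat n'est pas l'obtention de la contradiction mais la v\'erification de la proportionnalit\'e $(\check{\alpha}_0,\cdot)\propto \langle \alpha_0,\cdot\rangle$ : elle repose de mani\`ere essentielle sur la $W$-invariance de la norme, vraie par hypoth\`ese. Remarquons enfin que l'argument n'utilise ni l'hypoth\`ese \guill{$N_{\alpha,\beta}=\pm 1$} ni la structure combinatoire fine des $c_{\alpha,\beta}$, ce qui est coh\'erent avec le fait que le lemme correspond au cas \guill{trivial} o\`u $[X,Y]=0$ automatiquement.
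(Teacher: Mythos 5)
Votre preuve est correcte et suit essentiellement le m\^eme chemin que celle du texte\ : dans les deux cas on d\'eduit de la vacuit\'e des $C(\gamma)$ que $(\alpha_0,\beta)\geq 0$ pour tout $\beta\in\ES{R}(Y)$, puis on perturbe le co-caract\`ere optimal dans la direction duale de $\alpha_0$ pour contredire l'optimalit\'e (le texte utilise l'\'el\'ement $\eta$ identifi\'e \`a $\alpha_0$ via la forme --- un multiple positif de votre $\check{\alpha}_0$ --- et la minimalit\'e de $\|\cdot\|$ sur $\bs{\Lambda}_{F,Y}$ plut\^ot que la maximalit\'e de $\rho_Y$). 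La diff\'erence est purement de pr\'esentation, votre r\'edaction \'etant m\^eme un peu plus d\'etaill\'ee sur la proportionnalit\'e $(\check{\alpha}_0,\cdot)\propto\langle\alpha_0,\cdot\rangle$ et sur le fait que $\alpha_0+\beta\neq 0$.
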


% preuve
\begin{proof}
Supposons que tous les ensembles $C(\gamma)$ soient vides. Alors 
$(\alpha, \beta) \geq 0$ pour tout $(\alpha,\beta)\in \ES{R}(X)\times \ES{R}(Y)$. Si $X\neq 0$, on choisit une racine $\alpha\in \ES{R}(X)$, 
que l'on identifie ˆ un ŽlŽment $\eta\in \check{X}(A_0)_{\mathbb{Q}}$. On a alors  
$$\left\{\begin{array}{ll}
\langle \beta , \eta \rangle  =(\beta,\alpha)\geq 0 & \hbox{pour tout $\beta \in \ES{R}(Y)$}\\
(\lambda , \eta) = -i  <0 & 
\end{array}\right..$$
Posons $\mu = \wt{\lambda} + \frac{1}{a}\eta \in \check{X}(A_0)_\mbb{Q} $ avec $\wt{\lambda}= \frac{1}{k}\lambda\in \bs{\Lambda}_{F,Y}$ et $a\in \mbb{N}^*$. 
Alors $$\langle \beta , \mu \rangle = \langle \beta , \wt{\lambda}\rangle  + \frac{1}{a} \langle \beta , \eta \rangle \geq \langle \beta, \wt{\lambda} \rangle = 1
\quad \hbox{pour toute racine} \quad \beta \in \ES{R}(Y)$$ et 
$$(\mu,\mu) = (\wt{\lambda},\wt{\lambda}) + \frac{2}{a} ( \wt{\lambda}, \eta ) + \frac{1}{a^2} (\eta,\eta) < (\wt{\lambda},\wt{\lambda}) \quad \hbox{si} \quad 
\hbox{$a$ est suffisamment grand}\ptf$$ 
Ainsi pour $a\gg 1$, on a $m_{Y}(\mu)\geq 1$ et $(\mu,\mu) < (\wt{\lambda},\wt{\lambda})$, 
ce qui contredit la minimalitŽ de $\| \wt{\lambda}\|$ et donc le fait que $\wt{\lambda}$ appartient $\bs{\Lambda}_{F,Y}$. Donc $X=0$.
\end{proof}

Il suffit donc de prouver que tous les ensembles $C(\gamma)$ sont vides. 
Supposons  par l'absurde qu'il existe un $\gamma\in \ES{R}_\lambda(k-i)$ 
tel que l'ensemble $C(\gamma)$ ne soit pas vide. 
Quitte ˆ remplacer $X$ par $c^{-1}X$ pour une constante $c\in F^\times$, 
on peut supposer que $c_{\alpha,\beta}=1$ pour au moins un couple $(\alpha,\beta)\in C(\gamma)$. En choisissant une base du sous-$\mbb{F}_p$-espace 
vectoriel de $F$ engendrŽ par les $c_{\alpha,\beta}$ pour $(\alpha,\beta)\in C(\gamma)$, on obtient une ŽgalitŽ non triviale dans $\mbb{F}_p$ de la forme
$$\sum_{(\alpha,\beta)\in C'(\gamma)}c_{\alpha,\beta}N_{\alpha,\beta}=0\quad \hbox{avec} \quad c_{\alpha,\beta}\in \mbb{F}_p\leqno{(1)}$$ 
pour un sous-ensemble non vide $C'(\gamma)\subset C(\gamma)$. Puisque par hypothse les $N_{\alpha,\beta}$ sont tous non nuls dans $\mbb{F}_p= \mbb{Z}/p\mbb{Z}$, l'ensemble $C'(\gamma)$ est de cardinal 
$\geq 2$. Pour chaque couple $(\alpha,\beta)\in C'(\gamma)$, choisissons un relvement arbitraire $c'_{\alpha,\beta}$ 
de $c_{\alpha,\beta}$ dans $\mbb{Z}$. On a $$\sum_{(\alpha,\beta)\in C'(\gamma)}c'_{\alpha,\beta}N_{\alpha,\beta}\equiv 0\quad(\mathrm{mod}\; p\mbb{Z})\ptf$$ Comme  
tous les $N_{\alpha,\beta}$ sont Žgaux ˆ $\pm 1$, il suffit de modifier l'un des relvements $c'_{\alpha,\beta}$ pour avoir une ŽgalitŽ non triviale dans $\mbb{Z}$ 
$$\sum_{(\alpha,\beta)\in C'(\gamma)}c'_{\alpha,\beta}N_{\alpha,\beta}=0\quad \hbox{avec} \quad c'_{\alpha,\beta}\in \mbb{Z}\ptf\leqno{(2)}$$

On va modifier l'ŽgalitŽ (2) et en dŽduire une identitŽ analogue sur $\mbb{C}$, suivant le principe qui consiste ˆ ramener la thŽorie des ŽlŽments unipotents, resp. nilpotents, 
su un corps algŽbriquement clos de caractŽristique $p>1$ ˆ celle sur $\mbb{C}$ (cf. \cite{Lu,CP}). 
Pour cela on va utiliser l'hypothse $\check{X}_F(G)_{\mbb{Q}}\cap \bs{\Lambda}_Y \neq \emptyset$ (qui permet de passer de $F$ ˆ $\overline{F}$) 
et le thŽorme de Seshadri (qui permet de passer de $\overline{F}$ ˆ $\mbb{C}$).  

Comme en \ref{constructions prŽliminaires}, posons $\scrH= \scrM_\lambda^\perp$ et $\scrV= \mathfrak{g}_\lambda(k)_{\mbb{Z}}$, et notons 
$\scrV^{\mathrm{ss}}$ la sous-variŽtŽ ouverte de $\scrV$ telle que pour tout corps commutatif $\bs{k}$ algŽbriquement clos, $\scrV^{\mathrm{ss}}(\bs{k})$ soit l'ensemble 
des ŽlŽments $\scrH(\bs{k})$-semi-stable de $\scrV(\bs{k})$ (relativement ˆ l'action adjointe de $\scrH(\bs{k})$). Puisque par hypothse $\check{X}_F(G)\cap \bs{\Lambda}_{Y}\neq \emptyset$, le co-caractre 
$\lambda$ est $(\overline{F},Y)$-optimal et d'aprs le critre de Kirwan-Ness gŽomŽtrique, l'ŽlŽment $Y$ appartient ˆ $\scrV^{\mathrm{ss}}(\overline{F})$. 
En particulier $\scrV_{\mathrm{ss}}(\overline{F})$ est non vide. Cela assure que $\scrV_{\mathrm{ss}}(\mbb{C})$ est non vide, donc (ouvert) dense dans $\scrV(\mbb{C})$ pour la topologie de Zariski. 

Pour une famille $z=(z_\beta)_{\beta \in \ES{R}_\lambda(k)}$ d'ŽlŽments de $\mbb{C}^\times$, 
notons $\bs{Y}_{z}\in \mathfrak{g}_\lambda(k\mathpvg \mbb{C}) = \scrV(\mbb{C})$ et 
$\bs{X}_{z}\in \mathfrak{g}_\lambda(-i\mathpvg \mbb{C})$ 
les ŽlŽments dŽfinis par $$\bs{Y}_{z}= \sum_{\beta \in \ES{R}_\lambda(k)} z_\beta E_\beta
\quad \hbox{et}\quad \bs{X}_{z}= \sum_{(\alpha,\beta)\in C'(\gamma)}z_\beta^{-1} c'_{\alpha,\beta}  E_\alpha\,.$$ 
Par construction, on a $$[\bs{X}_{z},\bs{Y}_{z}]=\left(\sum_{(\alpha,\beta)\in C'(\gamma)} c'_{\alpha,\beta}N_{\alpha ,\beta}\right) E_\gamma =0\ptf$$ 
L'ensemble des $\bs{Y}_{z}$ pour $z \in (\mbb{C}^\times)^{\vert \ES{R}_\lambda(k)\vert}$ est 
un ouvert de Zariski de $\scrV(\mbb{C})$. Puisque $\scrV^{\mathrm{ss}}(\mbb{C})$ est dense dans $\scrV(\mbb{C})$, il contient un tel $\bs{Y}_z$. 
D'aprs le critre de Kirwan-Ness gŽomŽtrique, le co-caractre $\lambda$ est $(\mbb{C},\bs{Y}_{\!z})$-optimal (relativement ˆ l'action adjointe de $\scrH(\mbb{C})$). Par consŽquent $\bs{X}_z=0$ (d'aprs \ref{le cas sŽparable}); contradiction. 
Donc tous les ensembles $C(\gamma)$ sont vides et $X=0$ (d'aprs \ref{si C=0 alors X=0}). 

Cela achve la dŽmonstration de \ref{proposition L(noyau)} dans le cas o $G$ est $F$-dŽployŽ et tous les entiers $N_{\alpha,\beta}$ sont Žgaux ˆ $\pm 1$ (sous l'hypothse \ref{hyp bonnes F-strates}). 
On conclut gr‰ce ˆ \ref{rŽduction au cas dŽployŽ L}.

%%%%%%%%%%%%%%%%%%%%%%%%
\subsection{Le cas o $F$ est un corps local, resp. global ($p>1$)}\label{le cas d'un corps local} 
On suppose toujours que $G$ est $F$-dŽployŽ, avec $p>1$. On suppose de plus que $F= \mbb{F}_q((\varpi))$, avec $q=p^{r}$. D'aprs \cite{L}, l'hypothse \ref{hyp bonnes F-strates} est vŽrifiŽe pour 
$V=G$ muni de l'action par conjugaison et pour $V=\mathfrak{g}$ muni de l'action adjointe. Dans cette sous-section, on munit $G(F)$ et $\mathfrak{g}(F)$ de la topologie 
dŽfinie par $F$ ($\mathrm{Top}_F$).

Soient $\lambda$, $k$ et $Y$ comme dans l'ŽnoncŽ de \ref{proposition L(noyau)}. Comme en \ref{le cas o les N sont Žgaux ˆ 1 ou -1}, On peut supposer que $Y$ est en position standard et 
que $\lambda\in \check{X}(A_0)$. Comme en \ref{constructions prŽliminaires}, posons $\scrH= \scrM_\lambda^\perp$ et $\scrV= \mathfrak{g}_\lambda(k)_{\mbb{Z}}$. 
Quitte ˆ multiplier $Y$ par un ŽlŽment de $F^\times$, on peut aussi supposer que $Y\in \scrV(\mathfrak{o}_F)\smallsetminus \mathfrak{p}_F \scrV(\mathfrak{o}_F)$. 

Posons $H=M_\lambda^\perp\;(= \scrH\times_{\mbb{Z}}\mbb{Z})$ et $V=\mathfrak{g}_\lambda(k)\;(= \scrV\times_{\mbb{Z}}F)$. 
Puisque le co-caractre $\lambda$ est $(F,Y)$-optimal, d'aprs le critre de Kirwan-Ness rationnel (cf. \ref{constructions prŽliminaires}), 
l'ŽlŽment $Y$ est $(F,M_\lambda^\perp)$-semi-stable, i.e. il appartient ˆ $V(F)\smallsetminus \ES{N}_F^N(V,0)$. 

Posons $P_0^H=H\cap P_0$. C'est un $F$-sous-groupe de Borel de $H$, et en notant $K^H$ le sous-groupe compact maximal (pour 
$\mathrm{Top}_F$) $\scrH(\mathfrak{o}_F)$ de $H(F)$, on a la dŽcomposition d'Iwasawa $$H(F)= K^HP_0^H(F)=P_0^H(F)K^H\ptf$$ 
On en dŽduit que pour un ŽlŽment $Y_1\in V(F)$, les deux conditions suivantes sont Žquivalentes (cf. la preuve de \cite[2.8.7]{L}):
\begin{itemize}
\item $Y_1$ est $(F,H)$-semi-stable, i.e. $Y_1\in V(F)\smallsetminus \ES{N}^H_F(V,0)$;
\item pour tout co-caractre virtuel $\mu\in \check{X}(A_0)_{\mbb{Q}}$, la $K^H$-orbite $$K^H\cdot Y_1= \{\mathrm{Ad}_k(Y_1)\,\vert \, k\in K^H\}$$ ne rencontre pas $V_{\mu,1}(F)$. 
\end{itemize}

Puisque $Y$ appartient ˆ $V(F)\smallsetminus \ES{N}_F^H(V,0)$ et $\ES{N}^H_F(V,0)$ est $\mathrm{Top}_F$-fermŽ dans $V(F)$, 
il existe un plus petit entier $m\geq 1$ tel que l'ensemble $Y + \mathfrak{p}_F^m \scrV(\mathfrak{o}_F)$ soit contenu dans $V(F)\smallsetminus \ES{N}_F^H(V,0)$. Par consŽquent l'ensemble 
$$K^H\cdot (Y'+ \mathfrak{p}_F^m \scrV(\mathfrak{o}_F))= K^H\cdot Y + \mathfrak{p}_F^m \scrV(\mathfrak{o}_F)$$ est contenu dans $V(F)\smallsetminus \ES{N}_F^H(V,0)$. On a donc 
$$\left(K^H\cdot Y + \mathfrak{p}_F^m\scrV(\mathfrak{o}_F) \right)\cap V_{\mu,1}(F)= \emptyset \quad \hbox{pour tout} \quad \mu\in \check{X}(A_0)_{\mbb{Q}}\ptf$$ 
Pour $\mu\in \check{X}(A_0)_{\mbb{Q}}= \check{X}(\scrA_0)_{\mbb{Q}}$, le $F$-espace affine $V_{\mu,1}$ provient par le changement 
de base $\mbb{Z}\rightarrow F$ d'un sous-$\mbb{Z}$-schŽma fermŽ $\scrV_{\mu,1}\simeq_{\mbb{Z}}\mbb{A}_{\mbb{Z}}^{r}$ de $\scrV$. 
Observons que puisque $Y$ appartient ˆ $\scrV(\mathfrak{o}_F)$, l'ensemble $K_H\cdot Y + \mathfrak{p}_F^m\scrV(\mathfrak{o}_F)$ est contenu dans $\scrV(\mathfrak{o}_F)$; 
d'autre part pour tout $\mu\in \check{X}(A_0)$, on a l'ŽgalitŽ $\scrV(\mathfrak{o}_F)\cap V_{\mu,1}(F)= \scrV_{\mu,1}(\mathfrak{o}_F)$.  
Notons $\mathfrak{o}_{F,m}$ l'anneau tronquŽ $ \mathfrak{o}_F/ \mathfrak{p}_F^m$ et 
$\pi_{F,m}: \scrV(\mathfrak{o}_F) \rightarrow \scrV(\mathfrak{o}_{F,m})$ la projection naturelle. Elle est $K^H$-Žquivariante pour l'action de $K_H$ sur $\scrV(\mathfrak{o}_{F,m})$ donnŽe 
par la projection naturelle $K^H \rightarrow \scrH(\mathfrak{o}_{F,m})$. On obtient les ŽgalitŽs dans $\scrV(\mathfrak{o}_{F,m})$\footnote{Ces ŽgalitŽs permettent de dŽmontrer   
l'hypothse \ref{hyp bonnes F-strates} dans le cas de $\mathfrak{g}$ muni de l'action adjointe, cf. la preuve de \cite[2.8.7]{L}.}: 
$$\scrH(\mathfrak{o}_{F,m})\cdot \pi_{F,m}(Y) \cap \scrV_{\mu,1}(\mathfrak{o}_{F,m})=\emptyset \quad\hbox{pour tout}\quad \mu \in \check{X}(A_0)_{\mbb{Q}}\ptf$$ 

Soit $F'/\mbb{Q}_p$ une extension finie telle que les anneaux tronquŽs $\mathfrak{o}_{F'\!,m}$ et $\mathfrak{o}_{F,m}$ soient isomorphes, \cad telle que 
le corps rŽsiduel $\mathfrak{o}_{F'}/\mathfrak{p}_{F'}$ soit Žgal ˆ $\mbb{F}_q$ et l'indice de ramification de $F'/\mbb{Q}_p$ soit $\geq m$;  on dit alors que les corps $F$ et $F'$ sont \textit{$m$-proches}. 
Fixons un tel isomorphisme
$$\phi: \mathfrak{o}_{F,m}\buildrel\simeq\over{\longrightarrow} \mathfrak{o}_{F'\!,m}\ptf$$
En pratique, on considre la section de Teichm\"{u}ller (l'unique section multiplicative) 
$s:\mbb{F}_q\rightarrow \mathfrak{o}_{F'}$ de la projection naturelle $\mathfrak{o}_{F'}\rightarrow \mathfrak{o}_{F'}/\mathfrak{p}_{F'}=\mbb{F}_q$. 
Observons que $s(\mbb{F}_q)$ est contenu dans l'anneau des entiers $\mathfrak{o}_{F'_0}\simeq \mathrm{W}(\mbb{F}_q)$ de la sous-extension non ramifiŽe maximale $F'_0/\mbb{Q}_p$ de $F'/\mbb{Q}_p$. 
La section $s$ est additive modulo $p$: pour tous $x,\,y \in \mbb{F}_q$, on a $s(x+y)\equiv s(x) + s(y)\; (\mathrm{mod}\,p\mathfrak{o}_{F'})$. 
Choisissons une uniformisante $\varpi'$ de $F'$ et notons $\tilde{s}: \mbb{F}_q[[\varpi]] \rightarrow \mathfrak{o}_{F'}$ l'unique application multiplicative prolongeant $s$ et vŽrifiant 
$\tilde{s}(\varpi)= \varpi'$. Puisque $p\mathfrak{o}_{F'}\subset \mathfrak{p}_{F'}^m$, 
elle induit par passage aux quotients un isomorphisme d'anneaux $\phi=\phi_{s,m}(\varpi,\varpi'):\mathfrak{o}_{F,m} \rightarrow \mathfrak{o}_{F'\!,m}$.

Par transport de structure, $\phi$ induit pour tout $\mbb{Z}$-schŽma $\ES{Z}$ une application bijective $$\phi_{\ES{Z}}: \ES{Z}(\mathfrak{o}_{F,m})\rightarrow \ES{Z}(\mathfrak{o}_{F'\!,m})\ptf$$ 
Ainsi $\phi_{\scrV}: \scrV(\mathfrak{o}_{F,m})\rightarrow \scrV(\mathfrak{o}_{F'\!,m})$ est un isomorphisme de $\mathfrak{o}_{F,m}$-modules pour l'action de $\mathfrak{o}_{F,m}$ sur $\scrV(\mathfrak{o}_{F'\!,m})$ donnŽe 
par $\phi$, qui vŽrifie les propriŽtŽs: 
\begin{itemize}
\item $\phi_{\scrV}$ est $\scrH(\mathfrak{o}_{F,m})$-Žquivariante pour l'action de $\scrH(\mathfrak{o}_{F,m})$ sur $\scrV(\mathfrak{o}_{F'\!,m})$ donnŽe par $\phi_{\scrH}$; 
\item pour tout $\mu\in \check{X}(\scrA_0)_{\mbb{Q}}$, la restriction 
de $\phi_{\scrV}$ ˆ $\scrV_{\mu,1}$ co\"{\i}ncide avec $\phi_{\scrV_{\mu,1}}$. 
\end{itemize}
Soit $Y'\in \scrV(\mathfrak{o}_{F'})$ tel que 
$$\pi_{F'\!,m}(Y') = \phi_{\scrV}\circ \pi_{F,m}(Y)\ptf$$ D'aprs ce qui prŽcde, on a 
$$\scrH(\mathfrak{o}_{F'\!,m})\cdot \pi_{F'\!,m}(Y') \cap \scrV_{\mu,1}(\mathfrak{o}_{F'\!,m})=\emptyset \quad \hbox{pour tout} \quad \mu\in\check{X}(\scrA_0)_{\mbb{Q}}\ptf $$ 
Posons $V'= \scrV\times_{\mbb{Z}}F'$, $H'= \scrH\times_{\mbb{Z}}F'$, etc. On a prouvŽ que pour tout co-caractre virtuel $\mu\in \check{X}(A'_0)_{\mbb{Q}}= \check{X}(\scrA_0)_{\mbb{Q}}$, la $K^{H'}$-orbite 
$K^{H'}\cdot Y' $ ne rencontre pas $V'_{\mu,1}(F')$, ce qui assure que l'ŽlŽment $Y'$ 
est $(F'\!,H')$-semi-stable, i.e. $Y'\in V'(F')\smallsetminus \ES{N}_{F'}^{H'}(V'\!,0)$. 

D'aprs le critre de Kirwan-Ness rationnel, 
le co-caractre $\lambda\in \check{X}(A'_0)= \check{X}(\scrA_0)$ est $(F'\!,Y')$-optimal (relativement ˆ l'action adjointe de $\scrG(F')$). On peut donc appliquer \ref{le cas sŽparable}: 
pour $i\in \mbb{N}^*$, on a $\mathrm{Lie}(G')^{Y'}(F') \cap \mathfrak{g}_\lambda(-i;F')=\{0\}$. On en dŽduit (d'aprs \ref{L(injectivitŽ)=L(surjectivitŽ)}) que pour $i=1,\ldots ,k-1$, on a 
$[Y'\!, \mathfrak{g}_\lambda(-i\mathpvg F')]= \mathfrak{g}_\lambda(k-i\mathpvg F')$. 

Fixons un entier $i\in \{1,\ldots ,k-1\}$. Puisque $[Y'\!, \mathfrak{g}_\lambda(-i\mathpvg F')]= \mathfrak{g}_\lambda(k-i\mathpvg F')$, l'image de $\mathfrak{g}_\lambda(-i\mathpvg \mathfrak{o}_{F'})$ par 
l'application $[Y'\!,\cdot]$ est un $\mathfrak{o}_{F'}$-rŽseau de $\mathfrak{g}_\lambda(k-i\mathpvg F')$ contenu dans 
$\mathfrak{g}_\lambda(k-i\mathpvg \mathfrak{o}_{F'})$, disons $\Omega'$. Soient:
\begin{itemize}
\item $\Omega'_m$ l'image de 
$\Omega'$ dans $\mathfrak{g}_\lambda(k-1\mathpvg \mathfrak{o}_{F'\!,m})= \mathfrak{g}_\lambda(k-1\mathpvg \mathfrak{o}_{F'})/\mathfrak{p}_{F'}^m\mathfrak{g}_\lambda(k-1\mathpvg \mathfrak{o}_{F'})$ par 
la projection naturelle $$\mathfrak{g}_\lambda(k-i\mathpvg \mathfrak{o}_{F'})\rightarrow \mathfrak{g}_\lambda(k-i\mathpvg \mathfrak{o}_{F'\!,m});$$ 
\item $\Omega_m$ l'image rŽciproque de $\Omega'_m$ par 
l'isomorphisme de $\mathfrak{o}_{F,m}$-modules $$\phi_{\mathfrak{g}_{\lambda}(k-i)_{\mbb{Z}}}: 
\mathfrak{g}_\lambda(k-i\mathpvg \mathfrak{o}_{F,m}) \rightarrow \mathfrak{g}_\lambda(k-i\mathpvg \mathfrak{o}_{F'\!,m}) ;$$ 
\item $\Omega$ la prŽ-image de $\Omega_m$ dans $ \mathfrak{g}_\lambda(k-i\mathpvg \mathfrak{o}_F)$ par 
la projection naturelle $$\mathfrak{g}_\lambda(k-i\mathpvg \mathfrak{o}_{F})\rightarrow \mathfrak{g}_\lambda(k-i\mathpvg \mathfrak{o}_{F,m})\ptf$$
\end{itemize}Par construction, $\Omega$ est un $\mathfrak{o}_F$-rŽseau de $\mathfrak{g}_\lambda(k-i\mathpvg F)$ qui vŽrifie 
$$\mathfrak{p}_{F}^m\mathfrak{g}_{\lambda}(k-i\mathpvg \mathfrak{o}_F) \subset \Omega \subset \mathfrak{g}_{\lambda}(k-i\mathpvg \mathfrak{o}_F)$$ 
et $$[Y,\mathfrak{g}_\lambda(-i\mathpvg \mathfrak{o}_F)] + \mathfrak{p}_F^m\mathfrak{g}_{\lambda}(k-i\mathpvg \mathfrak{o}_F) = \Omega\ptf$$

Soit maintenant $e$ un entier $\geq 2$ et soit $F''/F'$ l'extension totalement ramifiŽe de degrŽ $e$ dŽfinie par le polyn™me d'Eisenstein $T^e -\varpi' \in F'[T]$. 
Les corps $F$ et $F''$ sont $me$-proches et l'on peut choisir un isomorphisme d'anneaux 
$\psi: \mathfrak{o}_{F,me}\rightarrow \mathfrak{o}_{F''\!,me}$  qui rend commutatif le diagramme 
$$\xymatrix{ \mathfrak{o}_{F,m} \ar[d] \ar[r]^\phi & \mathfrak{o}_{F'\!,m}\ar[d]\\
\mathfrak{o}_{F,me} \ar[r]_\psi & \mathfrak{o}_{F''\!,me}
}$$
o les flches verticales sont dŽfinies comme suit:
\begin{itemize}
\item celle de gauche est dŽduite par passage aux quotients de l'endomor\-phisme de l'anneau $\mathfrak{o}_F$ qui est l'identitŽ sur $\mbb{F}_q$ et envoie $\varpi$ sur $\varpi^e$;
\item celle de droite est dŽduite par passage aux quotients de l'inclusion $\mathfrak{o}_{F'}\subset \mathfrak{o}_{F''}$.
\end{itemize}
En effet, il suffit de construire $\psi$ comme on l'a fait pour $\phi= \phi_{s,m}(\varpi,\varpi')$ ˆ l'aide d'une uniformisante $\varpi''$ de $F''$ vŽrifiant $\varpi''^e = \varpi'$: 
on prend $\psi= \phi_{s,me}(\varpi,\varpi'')$.

Puisque l'extension $F''/F'$ est sŽparable, le co-caractre $\lambda$ est $(F''\!,Y')$-optimal; et comme sur $F'$, on a l'ŽgalitŽ 
$[Y'\!, \mathfrak{g}_\lambda(-i\mathpvg F'')]= \mathfrak{g}_\lambda(k-i\mathpvg F'']$. Notons $\Omega''$ le $\mathfrak{o}_{F''}$-rŽseau de $\mathfrak{g}_\lambda(k-i\mathpvg F'')$ image de $\mathfrak{g}_\lambda(-i\mathpvg \mathfrak{o}_{F''})$ par l'application $[Y'\!,\cdot]$. 
Puisque $\mathfrak{g}_\lambda(-i\mathpvg \mathfrak{o}_{F''})= \mathfrak{g}_\lambda(-i\mathpvg \mathfrak{o}_{F'})\otimes_{\mathfrak{o}_{F'}}\mathfrak{o}_{F''}$ et $\mathfrak{g}_\lambda(k-i\mathpvg \mathfrak{o}_{F''})= \mathfrak{g}_\lambda(k-i\mathpvg \mathfrak{o}_{F'})\otimes_{\mathfrak{o}_{F'}}\mathfrak{o}_{F''}$, on a l'ŽgalitŽ 
$$\Omega'' = \Omega' \otimes_{\mathfrak{o}_{F'}}\mathfrak{o}_{F''}\ptf$$
Soit $\Omega''_m$ l'image de 
$\Omega''$ dans $\mathfrak{g}_\lambda(k-1\mathpvg \mathfrak{o}_{F''\!,me})= \mathfrak{g}_\lambda(k-1\mathpvg \mathfrak{o}_{F'e})/\mathfrak{p}_{F''}^{me}\mathfrak{g}_\lambda(k-1\mathpvg \mathfrak{o}_{F''})$ par 
la projection naturelle $\mathfrak{g}_\lambda(k-i\mathpvg \mathfrak{o}_{F''})\rightarrow \mathfrak{g}_\lambda(k-i\mathpvg \mathfrak{o}_{F''\!,me})$.  
On a $$\Omega''_{me}= \Omega''\otimes_{\mathfrak{o}_{F''}}\mathfrak{o}_{F''\!,me}= (\Omega'\otimes_{\mathfrak{o}_{F'}}\mathfrak{o}_{F''})\otimes_{\mathfrak{o}_{F''}}\mathfrak{o}_{F''\!,me}
=\Omega \otimes_{\mathfrak{o}_{F'}}\mathfrak{o}_{F''\!,me}\ptf$$ 
Comme $\mathfrak{o}_{F''\!,me}= \mathfrak{o}_{F'\!,m}\otimes_{\mathfrak{o}_{F'}}\mathfrak{o}_{F''}$, on obtient l'ŽgalitŽ 
$$\Omega''_{me}= \Omega'_m \otimes_{\mathfrak{o}_{F'}}\mathfrak{o}_{F''}\ptf$$ 
On en dŽduit que l'image rŽciproque de $\Omega''_{me}$ par 
l'isomorphisme de $\mathfrak{o}_{F,me}$-modules $$\psi_{\mathfrak{g}_{\lambda}(k-i)_{\mbb{Z}}}: 
\mathfrak{g}_\lambda(k-i\mathpvg \mathfrak{o}_{F,me}) \rightarrow \mathfrak{g}_\lambda(k-i\mathpvg \mathfrak{o}_{F''\!,me})$$ est 
$$\Omega\otimes_{\mathfrak{o}_F}\mathfrak{o}_{F,me}\ptf$$ 
Par consŽquent $$[Y,\mathfrak{g}_\lambda(-i\mathpvg \mathfrak{o}_F)] + \mathfrak{p}_F^{me}\mathfrak{g}_{\lambda}(k-i\mathpvg \mathfrak{o}_F) = \Omega\ptf$$ 
Comme l'ŽgalitŽ ci-dessus est vraie pour tout entier $e\geq 1$, par passage ˆ la limite on obtient que
$$[Y,\mathfrak{g}_\lambda(-i\mathpvg \mathfrak{o}_F)] = \Omega \ptf$$ 
D'o l'ŽgalitŽ cherchŽe: $$[Y, \mathfrak{g}_\lambda(-i\mathpvg F)]= [Y,\mathfrak{g}_\lambda(-i\mathpvg \mathfrak{o}_F)] \otimes_{\mathfrak{o}_F} F= \Omega\otimes_{\mathfrak{o}_F} F= \mathfrak{g}_\lambda(k-i\mathpvg F)\ptf$$

Compte-tenu de \ref{L(injectivitŽ)=L(surjectivitŽ)}, cela achve la dŽmonstration de \ref{proposition L(noyau)} dans le cas o $G$ est $F$-dŽployŽ et o 
$F$ est un corps local non archimŽdien de caractŽristique $p>1$. 
On conclut gr‰ce ˆ \ref{rŽduction au cas dŽployŽ L}.

Supposons maintenant que $F$ soit un corps global de caractŽristique $p>1$. Soient $\lambda$, $k$ et $u$ comme dans l'ŽnoncŽ de \ref{proposition noyau}. 
Soit $v$ une place de $F$. Le complŽtŽ $F_v$ de $F$ en $v$ est une extension sŽparable de $F$ de degrŽ de transcendance infini. On peut donc appliquer \ref{descente sŽparable}: 
le co-caractre $\lambda$ (supposŽ $(F,u)$-optimal) est $(F_v,u)$-optimal. On vient de dŽmontrer que pour $i=1,\ldots ,k-1$, le morphisme  
$$\eta_{\lambda,u}(-i;F_v): \mathfrak{g}_\lambda(-i;F_v)\rightarrow \mathfrak{g}_{\lambda,k-i}(F_v)/\mathfrak{g}_{\lambda,k-i+1}(F_v)= \mathfrak{g}_\lambda(k-i\mathpvg F_v)$$ 
est injectif; cela implique que le morphisme $\eta_{\lambda,u}(-i;F)$ est lui aussi injectif.

%%%%%%%%%%%%%%%%%%%%%%%%%%%%%%%%%%%%%%%%%%%%%
\section{Mesure de Radon invariante sur les $F$-strates unipotentes}\label{mesures de Radon}
Dans toute cette section, $F$ est un corps commutatif localement compact non archimŽdien et la topologie est celle dŽfinie par $F$ ($\mathrm{Top}_F$); ˆ l'exception 
de \ref{le cas global} o $F$ est un corps global. 

%%%%%%%%%%%%%%%%%%%%%%
\subsection{Les objets}\label{les objets}
Soit $u\in \UF\smallsetminus \{1\}$. 
Posons $\scrY=\scrY_{F,u}$, $\scrX=\scrX_{F,u}$, $\bsfrY=\bsfrY_{F,u}$, $\bsfrX= \bsfrX_{F,u}$ et $P={_FP_u}$. 
Les ensembles $\scrX$ et $\bsfrX$ sont fermŽs dans $G(F)$, la $F$-lame $\scrY$ est ouverte dans $\scrX$ et la $F$-strate $\bsfrY$ est ouverte dans $\bsfrX$ 
La $F$-lame $\scrY$ est ouverte dans son $F$-saturŽ $\scrX$ et $\scrX$ est fermŽ dans $G(F)$ (cf. \cite[2.8]{L}). Soit $G(F)\times^{P(F)}\scrX$ le quotient 
$(G(F)\times \scrX)/P(F)$ pour l'action de $P(F)$ donnŽe par $(g,x)\cdot p = (gp,p^{-1}xp)$. Pour un couple 
$(g,x)\in G(F)\times \scrX$, on note $[g,x]$ son image dans $G(F)\times^{P(F)} \scrX$. D'aprs \cite[2.5.13]{L}, l'application (surjective) 
$$\bs{\pi}:G(F)\times^{P(F)}\!\scrX \rightarrow \bsfrX\vgq [g,x] \mapsto gxg^{-1}$$ induit par restriction une application bijective 
$$G(F)\times^{P(F)}\scrY \rightarrow \bsfrY$$ qui est un homŽomorphisme. De plus on a (loc.~cit.)
$$\bs{\pi}^{-1}(\bsfrY)= G(F)\times^{P(F)} \scrY\ptf$$

Fixons un $\lambda \in \Lambda_{F,u}^{\rm opt}$. 
Rappelons que $\scrX= G_{\lambda,k}(F)$ avec $k=m_{F,u}=m_u(\lambda) \geq 1$. 
En particulier $\scrX$ est un groupe unipotent (contenu dans $U_P(F)$). 

Les groupes $G(F)$ et $\scrX$ sont unimodulaires. On les munit 
d'une mesure de Haar, respectivement ${\rm d}g$ et ${\rm d}x$. 
Le groupe $P(F)=P_\lambda(F)$ n'est pas unimodulaire (il est distinct de $G(F)$ car $u\neq 1$). 
On le munit d'une mesure de Haar 
invariante ˆ droite ${\rm d}_{\rm r}p$ et on note $\bs{\delta}_P$ son caractre module. 

Pour $p\in P(F)$ et $s,\, t \in \mbb{N}^*$ avec $s<t$, on pose 
$$\bs{\delta}_{\lambda, s}(p)= \vert \det ({\rm Ad}_p \,\vert \, \mathfrak{g}_{\lambda,s}(F))\vert_F$$ et 
$$\bs{\delta}_{\lambda,(s,t)}(p)=\bs{\delta}_{\lambda,s}(p) \bs{\delta}_{\lambda,t}(p)^{-1}=\vert \det ({\rm Ad}_p\,\vert \, \mathfrak{g}_{\lambda,s}(F)/ \mathfrak{g}_{\lambda,t}(F) )\vert_F\ptf$$ 
On a donc $$\bs{\delta}_{\lambda,1}(p)= \bs{\delta}_P(p) \quad \hbox{et} \quad \bs{\delta}_{\lambda,(1,k)}(p) = \bs{\delta}_P(p)\bs{\delta}_{\lambda,k}(p)^{-1}\ptf$$

% remarque
\begin{remark}{\rm 
Si $\scrX\neq U_P$, il n'y a pas de mesure de Radon $G(F)$-invariante ˆ gauche non nulle sur $G(F)\times^{P(F)}\scrX$ 
mais il existe une fonctionnelle linŽaire $G(F)$-invariante ˆ gauche non nulle sur l'espace des sections du fibrŽ en droite sur 
$G(F)\times^{P(F)}\scrX$ dŽfini par $\bs{\delta}_{\lambda,(1,k)}^{-1}$. 
Nous aurons aussi besoin de considŽrer le quotient $$P(F)\times^{P(F)} \scrX= (P(F)\times \scrX)/P(F)\simeq \scrX\ptf$$ 
C'est pourquoi nous introduisons le groupe $H$ ci-dessous (en pratique on prendra $H=G$ ou $H=P$). 
}\end{remark}

Soit $H$ un sous-groupe fermŽ de $G$ dŽfini sur $F$ et contenant $P$. Le groupe $H(F)$ est localement compact; 
on peut donc le munir d'une mesure de Haar invariante ˆ droite  
$\dd_{\rm r}h$. Soit $\HHFu $ l'espace des fonctions localement constantes $\Phi$ sur $H(F) \times \scrX$ qui vŽrifient, pour 
tout $(h,x)\in H(F)\times \scrX$: 
\begin{itemize}
\item $\Phi((h,x)\cdot p) = \bs{\delta}_{\lambda,(1,k)}(p)^{-1}\Phi(h,x)$ pour tout $p\in P(F)$; 
\item il existe un sous-ensemble compact $\Omega_\Phi \subset H(F) \times \scrX$ tel que si 
$\Phi(h,x)\neq 0$, alors il existe un $p\in P(F)$ tel que $(h,x)\cdot p \in \Omega_\Phi$.
\end{itemize}
Observons que pour $\Phi\in \HHFu$ et $(h,x)\in H(F)\times \scrX$, on a 
$$\Phi(h,xx')= \Phi(h x'^{-1}, x'x) \quad \hbox{pour tout} \quad x'\in \scrX\ptf$$
Pour $\theta\in C^\infty_{\rm c}(H(F)\times\scrX)$ et $(h,x)\in H(F)\times \scrX$, posons 
$$\Phi_\theta(h,x) = \int_{P(F)} \theta(hp,p^{-1} x p) \bs{\delta}_{\lambda,k}(p)^{-1}\dd_{\rm r} p\ptf $$ 
Puisque $\dd_{\rm l}p= \bs{\delta}_{P}(p)^{-1} \dd_{\rm r}p$ est une mesure de Haar invariante ˆ gauche sur 
$P(F)$, la fonction $\Phi_\theta$ appartient ˆ $\HHFu$. 

% lemme
\begin{lemma}\label{surjectivitŽ f donne Phi}
\begin{enumerate}
\item[(i)]L'application linŽaire $ C^\infty_{\rm c}(H(F)\times \scrX)\rightarrow \HHFu,\, \theta \mapsto \Phi_\theta$ est surjective. 
\item[(ii)] Si $\Phi_\theta=0$ alors $$\int_{H(F)\times \scrX} \theta(h,x)\dd_{\rm r}h \dd x=0\ptf$$
\end{enumerate}
\end{lemma}

% preuve
\begin{proof}
Posons $\mathcal{C}= C^\infty_{\rm c}(H(F)\times \scrX)$ et $\mathcal{H}= \HHFu$. Soit $\Omega$ un sous-ensemble ouvert compact de $H(F)\times \scrX$ 
de la forme $\Omega_1\times \Omega_2$. On suppose que $\Omega_1$ est de la forme $K_1h_1$ pour un sous-groupe ouvert compact $K_1$ de 
$H(F)$ et un ŽlŽment $h_1\in H(F)$. On note $\mathcal{C}_\Omega$ le sous-espace de $\mathcal{C}$ formŽ des fonctions ˆ support dans $\Omega\cdot P(F)$ 
qui sont constantes sur $\Omega\cdot p= \Omega_1 p \times p^{-1}\Omega_2 p$ pour tout $p\in P(F)$. On dŽfinit de la mme manire 
le sous-espace $\mathcal{H}_\Omega$ de $\mathcal{H}$. 
Observons que si $\Omega\cdot p \cap \Omega\cdot p'\neq \emptyset$ pour des ŽlŽments $p,\, p'\in P(F)$, alors $p' \in h_1^{-1}K_1 h_1 p$ et 
$\bs{\delta}_{\lambda,(1,k)}(p')= \bs{\delta}_{\lambda,(1,k)}(p)$. Cela donne un sens ˆ la dŽfinition de $\mathcal{H}_\Omega$: on a 
$\mathcal{H}_\Omega \neq \{0\}$ et $\dim_{\mbb{C}}(\mathcal{H}_\Omega)=1$. L'application 
$\theta\mapsto \Phi_\theta$ envoie $\mathcal{C}_\Omega$ dans $\mathcal{H}_\Omega$ et il suffit de vŽrifier les assertions du lemme pour ces deux espaces. 

Prouvons (i). Soit $\Phi \in \mathcal{H}_\Omega$. Notons $\mathbf{1}_\Omega$ la fonction caractŽristique 
de $\Omega$ et pour $\omega= (h,x)\in \Omega$, posons 
$$\kappa(\omega) = \int_{P(F)}{\bf 1}_\Omega(\omega\cdot p) \bs{\delta}_{\lambda,(1,k)}(p)^{-1}\dd_{\rm r}p >0\ptf$$
La fonction $\theta = \kappa^{-1}\cdot {\bf 1}_\Omega\cdot \Phi$ appartient ˆ $\mathcal{C}_\Omega$ et on a $\Phi_\theta=\Phi$. 

Prouvons (ii). Pour $\theta\in \mathcal{C}_\Omega$ et $p\in P(F)$, notons $\theta^p\in G(F)\times \scrX$ la fonction dŽfinie par $\theta^p(h,x)= \theta(hp,p^{-1}xp)$. 
C'est encore un ŽlŽment de $\mathcal{C}_\Omega$ et tout ŽlŽment de $\mathcal{C}_\Omega$ est combinaison linŽaire (finie) de fonctions 
$({\bf 1}_\Omega)^p$ avec $p\in P(F)$. On en dŽduit que les fonctionnelles linŽaires $T$ sur $\mathcal{C}_\Omega$ 
telles que $T(\theta^p) = \bs{\delta}_{\lambda,k}(p)T(\theta)$ pour tout $\theta\in \mathcal{C}_\Omega$ et 
tout $p\in P(F)$, forment un espace vectoriel de dimension $1$. Comme les fonctionnelles linŽaires 
$$T(\theta)= \int_{H(F)\times \scrX}\theta(h,x)\dd_{\rm r}h \dd x\quad \hbox{et} \quad T'_\omega(\theta) = \Phi_\theta(\omega)\quad \hbox{avec}\quad \omega\in \Omega$$ 
sont dans cet espace, elles sont proportionnelles. Cela implique (ii). \end{proof}

On note $\bs{\mu}_{F,u}^H$ la fonctionnelle linŽaire positive sur l'espace $\HHFu$ dŽfinie par 
$$\langle \bs{\mu}_{F,u}^H, \Phi \rangle = \int_{H(F)\times \scrX}\theta(h,x)\dd_{\rm r} h \dd x$$ 
pour une (i.e. pour toute) fonction $\theta\in C^\infty_{\rm c}(H(F)\times \scrX)$ telle que $\Phi=\Phi_\theta$. 
On Žcrit aussi $$\langle \bs{\mu}^H_{F,u},\Phi \rangle = \int_{H(F)\times^{P(F)}\scrX} \Phi(h,x) \dd \bs{\mu}_{F,u}^H(h,x) \ptf$$ 
Le groupe $H(F)$ opre ˆ gauche sur l'espace $H(F)\times \scrX$ par translations sur la premire variable. L'application linŽaire 
$\theta \mapsto \Phi_\theta$ est $H(F)$-Žquivariante pour cette action: pour $h\in H(F)$, en notant ${^h\xi}$ la fonction $(h',x)\mapsto \xi(h^{-1}h',x)$ pour toute fonction 
$\xi$ sur $H(F)\times \scrX$, on a ${^h(\Phi_\theta)}= \Phi_{\smash{^h\!\theta}}$. Soit $\bs{\delta}_H$ le caractre-module de $H(F)$. 
Puisque $\dd_{\rm l} h= \bs{\delta}_H(h)^{-1}\dd_{\rm r}h$ est une mesure 
de Haar invariante ˆ gauche sur $H(F)$, on a le 

% lemme
\begin{lemma}\label{action par translations ˆ gauche} 
Pour tout $\Phi\in \HHFu$ et tout $h\in H(F)$, on a
$$\langle \bs{\mu}_{F,u}^H,\!{^h\Phi}\rangle = \bs{\delta}_H(h)\langle \bs{\mu}^H_{F,u},\Phi \rangle\ptf$$ 
\end{lemma} 

% remrque
\begin{remark}\label{remarques gŽnŽrales sur l'espace H rond}
\textup{
\begin{enumerate}
\item[(i)]On peut de la mme manire dŽfinir une fonctionnelle linŽaire $H(F)$-invariante ˆ gauche sur l'espace des sections du fibrŽ en droite sur 
$H(F)\times^{P(F)}\scrX$ dŽfini par $\bs{\delta}_H\bs{\delta}_{\lambda,(1,k)}^{-1}$: il suffit pour cela de remplacer 
$\bs{\delta}_{\lambda,k}(p)$ par $\bs{\delta}_H(p)\bs{\delta}_{\lambda,k}(p)$ dans la 
dŽfinition de l'application $\theta\mapsto \Phi_\theta$ et $\dd_{\rm r}h$ par $\dd_{\rm l}h $ dans celle de la fonctionnelle linŽaire $\bs{\mu}^H_{F,u}$. 
Mais pour $H=P$, on prŽfre 
travailler avec une fonctionnelle linŽaire qui se transforme ˆ gauche suivant $\bs{\delta}_{P}$.
\item[(ii)] Pour $H=P$ et $\Phi \in \mathcal{H}_{F,u}^P$, la fonction $\Phi(1,\cdot)$ sur $\scrX$ est lisse et ˆ support compact, et elle vŽrifie 
$$\Phi(1,pxp^{-1})= \bs{\delta}_{\lambda,(1,k)}(p)\Phi(p,x)\quad 
\hbox{pour tout} \quad (p,x)\in P(F)\times \scrX\ptf$$ 
L'application $\Phi \mapsto \Phi(1,\cdot)$ est un isomorphisme de $\mathcal{H}_{F,u}^P$ sur un sous-espace de $C^\infty_{\mathrm{c}}(\scrX)$ qui n'est pas vraiment commode ˆ 
utiliser et dont la dŽfinition passe de toutes faons par celle de $\mathcal{H}_{F,u}^P$. 
\end{enumerate}}
\end{remark}

Tous les objets que l'on vient de dŽfinir ne dŽpendent que de la $F$-lame $\scrY= \scrY_{F,u}$ ˆ laquelle appartient 
$u$, du groupe $H(F)$ et du choix des mesures de Haar. Rappelons que la $F$-lame $\scrY$ dŽtermine l'ensemble $\scrX$, 
la $F$-strate $\bsfrY$ et l'ensemble $\bsfrX$.

% notation
\begin{notation}
{\rm 
Pour allŽger l'Žcriture, on pose:
\begin{itemize}
\item $\mathcal{H}= \mathcal{H}^{G}_{F,u}$ et $\bs{\mu}=\bs{\mu}_{F,u}^{G}$;
\item $\mathcal{H}^P= \mathcal{H}^{P}_{F,u}$ et $\bs{\mu}^P= \bs{\mu}_{F,u}^{P}$.
\end{itemize}
}
\end{notation}

On a une action ˆ droite de $\scrX$ sur $P(F)\times \scrX$ donnŽe par $(p,x)\star x' = (p,x p^{-1}x' p)$ pour tout $(p,x)\in P(F)\times \scrX$ et 
tout $x'\in \scrX$. Cette action {\it ne commute pas} ˆ l'action ˆ gauche de $P(F)$ par translations sur la premire variable. 
PrŽcisŽment, pour tous $(p,x),\, (p'x')\in P(F)\times \scrX$, on a 
$$p \cdot ((p',x')\star x)=(p\cdot (p',x'))\star pxp^{-1}\ptf$$
En revanche elle commute ˆ l'action ˆ droite de $P(F)$ donnŽe par $$(p,x)\cdot p'=(pp',p'^{-1}xp')$$ 
et par consŽquent 
rend $\scrX$-Žquivariante l'application $C^\infty_{\rm c}(P(F)\times \scrX)\rightarrow \mathcal{H}^P,\, \theta\mapsto \Phi_\theta$: pour tout $x'\in \scrX$, 
on a $$(\Phi_\theta)^{\star x'}= \Phi_{\smash{\theta^{\star x'}}}$$ o, 
pour toute fonction $\xi$ sur $P(F)\times \scrX$, on a posŽ $\xi^{\star x'}(p,x)=\xi((p,x)\star x')$. 

%remarque
\begin{remark}\textup{Cette action de $\scrX$ sur $\mathcal{H}^P$ n'est autre que celle dŽduite des translations 
ˆ droite via l'isomorphisme $\Phi \mapsto \Phi(1,\cdot)$ de \ref{remarques gŽnŽrales sur l'espace H rond}\,(ii): pour $\Phi\in \mathcal{H}^P$ et $x\in \scrX$, on a 
$\Phi^{\star x}(1,\cdot)= (\Phi(1,\cdot))^x$ avec $\psi^x(x')= \psi(x'x)$.
}\end{remark}

On en dŽduit que la fonctionnelle linŽaire $\bs{\mu}^P$ 
sur $\mathcal{H}^P$ est $\scrX$-invariante pour cette action ˆ droite: 

% lemme 
\begin{lemma}\label{action de X ˆ droite}
Pour tout $\Phi\in \mathcal{H}^P$ et tout $x\in \scrX$, on a
$$\langle \bs{\mu}^P\!, \Phi^{\star x}\rangle = \langle \bs{\mu}^P\! ,\Phi\rangle\ptf$$
\end{lemma}

Les propriŽtŽs d'invariance \ref{action par translations ˆ gauche} et \ref{action de X ˆ droite} impliquent le

% lemme 
\begin{lemma}\label{unicitŽ de tilde mu}
\`A multiplication prs par une constante non nulle, $\bs{\mu}^P$ est l'unique fonctionnelle linŽaire non nulle sur $\mathcal{H}^P$ 
qui se transforme ˆ gauche suivant $\bs{\delta}_{P}$ (\ref{action par translations ˆ gauche}) et soit $\scrX$-invariante ˆ droite (\ref{action de X ˆ droite}).
\end{lemma}

%preuve
\begin{proof}
Fixons un systme fondamental $\{K^\nu\}_{\nu \in I}$ de voisinages de l'unitŽ dans $P(F)\times \scrX$ formŽ de sous-groupes 
ouverts compacts de la forme $K^\nu= K^\nu_1\times K^\nu_2$ avec $K^\nu_1\subset P(F)$, $K^\nu_2\subset \scrX$ et tel que $K^\nu_2$ soit 
distinguŽ dans $K^\nu_1$. Pour $\nu\in I$, notons 
$\bs{1}_{K^\nu}$ la fonction caractŽristique de $K^\nu$, posons $\xi^\nu=\Phi_{\bs{1}_{K^\nu}}\in \mathcal{H}^P$ et notons 
$\mathcal{H}^{P,\nu}$ le sous-espace de $\mathcal{H}^P$ engendrŽ par les fonctions ${^p((\xi^\nu)^{\star x})}$ 
pour $(p,x)\in P(F)\times \scrX$. 
Puisque ${^p(\Phi^{\star x})}= ({^p\Phi})^{\star pxp^{-1}}$, $\mathcal{H}^{P,\nu}$ est aussi le sous-espace de $\mathcal{H}^P$ 
engendrŽ par les fonctions $({^p(\xi^\nu)})^{\star x}$ pour $(p,x)\in P(F)\times \scrX$. 
D'autre part les isomorphismes $$\mathcal{H}^P\rightarrow \mathcal{H}^P,\, 
\Phi \mapsto {^p((\Phi)^{\star x})}\quad \hbox{pour}\quad (p,x)\in P(F)\times \scrX$$ 
stabilisent $\mathcal{H}^{P,\nu}$. 

Puisque $\mathcal{H}^P= \bigcup_{\nu \in I} \mathcal{H}^{P,\nu}$ et 
$\mathcal{H}^{P,\nu} \supset \mathcal{H}^{P,\nu'}$ si $K^\nu \subset K^{\nu'}$, pour dŽfinir 
une fonctionnelle linŽaire $\bs{\eta}$ sur $\mathcal{H}^P$ qui se transforme ˆ gauche suivant $P(F)$ et soit $\scrX$-invariante ˆ droite, 
il suffit de dŽfinir une famille compatible $\{\bs{\eta}^\nu\}_{\nu\in I}$ o $\bs{\eta}^\nu$ est une fonctionnelle linŽaire 
sur $\mathcal{H}^{P,\nu}$ telle que $\langle \bs{\eta}^\nu , {^p(\Phi^{\star x})} \rangle =\bs{\delta}_{P}(p) \langle \bs{\eta}^\nu, \Phi \rangle $ pour tout 
$\Phi\in \mathcal{H}^{P,\nu}$ et tout $(p,x)\in P(F)\times \scrX$. Une telle fonctionnelle est entirement dŽterminŽe par la valeur 
$\langle \bs{\eta}^\nu, \xi^\nu \rangle$. Cela prouve le lemme.
\end{proof}

%%%%%%%%%%%%%%%%%%%%%%%%%
\subsection{Le lemme-clŽ}\label{le lemme-clŽ} Continuons avec les notations de \ref{les objets}. 
Posons $$\overline{\scrX}=G_{\lambda}(k;F)\;(=G_{\lambda,k}(F)/G_{\lambda,k+1}(F))$$ 
et notons $\overline{\scrY}$ l'image de $\scrY$ dans $\overline{\scrX}$. Pour $x\in \scrX$ d'image 
$\bar{x}$ dans $\overline{X}$ et $i\in \mbb{Z}$, posons (cf. \ref{ŽnoncŽ})  
$$\alpha_i(\bar{x})= \eta_{\lambda,x}(i\mathpvg F) \quad \hbox{et}\quad d_i = \dim_F(\mathfrak{g}_\lambda(i\mathpvg F))\ptf$$  
D'aprs \ref{proposition noyau} (prouvŽ en \ref{le cas d'un corps local}) et \ref{injectivitŽ=surjectivitŽ}, pour $i=1,\ldots ,k-1$, l'application 
$\alpha_{-i}(\bar{u})$ est un isomorphisme; et on a $d_{k-i}=d_{-i}=d_i$. 
Fixons une $F$-base $(H_{-i,j})_{1\leq j \leq d_i}$ de $\mathfrak{g}_\lambda(-i;F)$ et une $F$-base 
$(H_{k-i,j})_{1\leq j \leq d_i}$ de $\mathfrak{g}_\lambda(k-i;F)$; on peut par exemple 
prendre $H_{k-i,j}= \alpha_{-i}(\bar{u})(H_{-i,j})$. 
Pour $\bar{x}\in \overline{\scrX}$, notons $A_{-i}(\bar{x})\in M(d_i,F)$ la matrice de $\alpha_{-i}(\bar{x})$ relativement 
ˆ ces bases et posons $$\varphi_{-i}(\bar{x})= \vert {\rm det}_F(A_{-i}(\bar{x}))\vert_F^{1/2}\ptf$$ Soit 
$$\varphi(\bar{x})= \varphi_{-1}(\bar{x})\cdots \varphi_{1-k}(\bar{x})\geq 0\ptf$$ 
D'aprs loc.~cit., on a $$\varphi(\bar{y})>0\quad\hbox{pour tout}\quad \bar{y}\in \overline{\scrY}\ptf$$
Observons qu'ˆ multiplication prs par une constante $>0$, la fonction $\varphi$ ne dŽpend pas du choix des bases $(H_{-i,j})$ et $(H_{k-i,j})$ 
pour $i=1,\ldots ,k-1$. 

% lemme
\begin{lemma}\label{varphi et inverse}
Pour $\bar{x}\in \overline{\scrX}$, on a $\varphi(\bar{x}^{-1}) = \varphi(\bar{x})$.
\end{lemma}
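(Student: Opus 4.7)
La stratégie sera de démontrer que $\varphi_{-i}(\bar{x}^{-1}) = \varphi_{-i}(\bar{x})$ pour chaque $i \in \{1,\ldots ,k-1\}$ et de conclure par produit sur $i$ (le cas $k=1$, où $\varphi$ est un produit vide, est trivial). Le point de départ sera l'identité algébrique
$$\mathrm{Ad}_{x^{-1}} - \mathrm{Id} = -\,\mathrm{Ad}_{x^{-1}} \circ (\mathrm{Ad}_x - \mathrm{Id})$$
valable dans $\mathrm{End}_F(\mathfrak{g}(F))$.

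L'étape clé est la suivante : puisque $x^{-1}$ appartient à $G_{\lambda,k}(F)$ avec $k\geq 1$, les relations de commutateurs de Chevalley rappelées juste avant la proposition \ref{proposition noyau} entraînent que $\mathrm{Ad}_{x^{-1}} - \mathrm{Id}$ envoie $\mathfrak{g}_{\lambda,j}(F)$ dans $\mathfrak{g}_{\lambda,j+k}(F) \subset \mathfrak{g}_{\lambda,j+1}(F)$ pour tout $j\in \mbb{Z}$. Par conséquent, $\mathrm{Ad}_{x^{-1}}$ induit l'identité sur chaque quotient gradué $\mathfrak{g}_\lambda(j;F) = \mathfrak{g}_{\lambda,j}(F)/\mathfrak{g}_{\lambda,j+1}(F)$. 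En passant aux quotients dans l'identité ci-dessus, on obtiendra donc
$$\alpha_{-i}(\bar{x}^{-1}) = -\,\alpha_{-i}(\bar{x}) \quad\hbox{dans}\quad \mathrm{Hom}_F(\mathfrak{g}_\lambda(-i;F),\mathfrak{g}_\lambda(k-i;F))\ptf$$

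Il en résultera que $A_{-i}(\bar{x}^{-1}) = -A_{-i}(\bar{x})$ dans $M(d_i,F)$, d'où
$$\vert \det_F(A_{-i}(\bar{x}^{-1}))\vert_F = \vert (-1)^{d_i}\det_F(A_{-i}(\bar{x}))\vert_F = \vert \det_F(A_{-i}(\bar{x}))\vert_F\vg$$
et donc $\varphi_{-i}(\bar{x}^{-1}) = \varphi_{-i}(\bar{x})$, ce qui conclut par produit sur $i = 1,\ldots ,k-1$. Il n'y a pas d'obstacle sérieux dans cette preuve : tout repose sur l'identité algébrique de départ et sur le fait, élémentaire, que la conjugaison par un élément de $G_{\lambda,k}(F)$ agit trivialement sur chaque quotient gradué de la filtration $(\mathfrak{g}_{\lambda,j}(F))_{j\in \mbb{Z}}$.
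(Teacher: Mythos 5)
Votre preuve est correcte et aboutit, comme celle du texte, \`a l'identit\'e cl\'e $\alpha_{-i}(\bar{x}^{-1})=-\alpha_{-i}(\bar{x})$, mais par une voie diff\'erente. Le texte se ram\`ene au cas o\`u $u$ est en position standard avec $\lambda\in \check{X}(A_0)$ et utilise le $F$-isomorphisme $j_\lambda(k)\colon \mathfrak{g}_\lambda(k)\rightarrow G_\lambda(k)$ d\'eduit de $j_0$ pour identifier $\alpha_{-i}(\bar{x})$ au crochet $[\overline{X},\cdot\,]$ avec $\overline{X}=j_\lambda(k)^{-1}(\bar{x})$; l'\'egalit\'e r\'esulte alors de ce que $\bar{x}^{-1}=j_\lambda(k)(-\overline{X})$. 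Vous restez enti\`erement au niveau du groupe: l'identit\'e ${\rm Ad}_{x^{-1}}-{\rm Id}=-{\rm Ad}_{x^{-1}}\circ({\rm Ad}_x-{\rm Id})$, jointe au fait que ${\rm Ad}_{x^{-1}}$ agit trivialement sur chaque quotient gradu\'e (puisque $x^{-1}\in G_{\lambda,k}(F)$ et $k\geq 1$, ${\rm Ad}_{x^{-1}}-{\rm Id}$ envoie $\mathfrak{g}_{\lambda,j}$ dans $\mathfrak{g}_{\lambda,j+k}\subset \mathfrak{g}_{\lambda,j+1}$), donne directement le signe $-1$ sur chaque $\alpha_{-i}$, donc l'invariance de $\vert\det_F\vert_F$ et de $\varphi_{-i}$. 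Votre argument est plus \'el\'ementaire: il ne n\'ecessite ni la r\'eduction \`a la position standard ni le recours \`a $j_0$, et il vaut tel quel pour tout $\bar{x}\in \overline{\scrX}$; en contrepartie, l'argument du texte rend explicite le lien entre $\alpha_{-i}(\bar{x})$ et l'op\'erateur $[\,\overline{X},\cdot\,]$ sur l'alg\`ebre de Lie, identification qui sert par ailleurs dans la construction m\^eme de $\varphi$ et dans le passage aux $F$-strates nilpotentes.
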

\begin{proof}
On peut supposer $u$ en position standard et $\lambda \in \check{X}(A_0)$. Le $F$-isomorphisme $j_0: \mathfrak{u}_0\rightarrow U_0$ induit par 
restriction et passage aux quotients un $F$-isomorphisme $j_\lambda (k) : \mathfrak{g}_\lambda(k)\rightarrow G_\lambda(k)=G_{\lambda,k}/G_{\lambda,k+1}$. 
Pour $\bar{x}\in \overline{\scrX}$ et $i=1,\ldots , k-1 $, 
en posant $\overline{X}= j_\lambda(k)^{-1}(\bar{x})\in \mathfrak{g}_\lambda(k\mathpvg F)$,  on a 
$$\alpha_{-i}(\bar{x})= [\overline{X}, \cdot] : \mathfrak{g}_\lambda(-i\mathpvg F)\rightarrow \mathfrak{g}_\lambda(k-i\mathpvg F)\ptf$$ 
D'o le lemme, puisque $\bar{x}^{-1}= j_\lambda(k)(- \overline{X})$. 
\end{proof}

Le lemme suivant est l'analogue du lemma 2 de \cite{RR}:

% lemme
\begin{lemma}\label{lemme RRao}
Pour tout $\bar{x}\in \overline{\scrX}$ et tout $m\in M_\lambda(F)$, on a 
$$\varphi ({\rm Int}_m(\bar{x}))=  \bs{\delta}_{\lambda,(1,k)}(m) \varphi(\bar{x})\ptf$$ 
\end{lemma}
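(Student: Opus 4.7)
The plan is to transport the statement to the Lie algebra side via the $F$-isomorphism $j_\lambda(k)$ used in the proof of Lemma \ref{varphi et inverse}, exploit the fact that the commutator operation $[\overline{X},\cdot]$ behaves equivariantly under $\mathrm{Ad}_m$, and then combine this with a duality identity between the determinants of $\mathrm{Ad}_m$ on $\mathfrak{g}_\lambda(i;F)$ and on $\mathfrak{g}_\lambda(-i;F)$. I will not need $j_\lambda(k)$ itself to be $M_\lambda$-equivariant: working directly on the group side, for $x\in \scrX$, $m\in M_\lambda(F)$ and $Y\in \mathfrak{g}_{\lambda,-i}(F)$, inserting $\mathrm{Ad}_m\mathrm{Ad}_{m^{-1}}=\mathrm{Id}$ yields
$$(\mathrm{Ad}_{mxm^{-1}} -\mathrm{Id})(Y) = \mathrm{Ad}_m\bigl((\mathrm{Ad}_x-\mathrm{Id})(\mathrm{Ad}_{m^{-1}}(Y))\bigr).$$
Since $m$ centralizes $\mathrm{Im}(\lambda)$, $\mathrm{Ad}_m$ preserves every $\mathfrak{g}_\lambda(j;F)$ and hence the filtration step $\mathfrak{g}_{\lambda,k-i}(F)\supset \mathfrak{g}_{\lambda,k-i+1}(F)$. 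Passing to the subquotient, this gives the factorization of $F$-linear maps
$$\alpha_{-i}(\mathrm{Int}_m(\bar{x})) = \mathrm{Ad}_m|_{\mathfrak{g}_\lambda(k-i;F)}\circ \alpha_{-i}(\bar{x})\circ \mathrm{Ad}_{m^{-1}}|_{\mathfrak{g}_\lambda(-i;F)}.$$

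Taking $F$-determinants, normalized absolute values, square roots, and multiplying over $i=1,\dots ,k-1$, I obtain
$$\varphi(\mathrm{Int}_m(\bar{x})) = \varphi(\bar{x}) \cdot \prod_{i=1}^{k-1} |\det(\mathrm{Ad}_m|_{\mathfrak{g}_\lambda(k-i;F)})|_F^{1/2} \cdot \prod_{i=1}^{k-1} |\det(\mathrm{Ad}_m|_{\mathfrak{g}_\lambda(-i;F)})|_F^{-1/2}.$$
Reindexing the first product by $j=k-i$ transforms it into $\prod_{j=1}^{k-1}|\det(\mathrm{Ad}_m|_{\mathfrak{g}_\lambda(j;F)})|_F^{1/2}$. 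The proof is therefore reduced to the duality identity
$$|\det(\mathrm{Ad}_m|_{\mathfrak{g}_\lambda(-i;F)})|_F = |\det(\mathrm{Ad}_m|_{\mathfrak{g}_\lambda(i;F)})|_F^{-1},$$
for then the two products combine into $\prod_{i=1}^{k-1}|\det(\mathrm{Ad}_m|_{\mathfrak{g}_\lambda(i;F)})|_F = \bs{\delta}_{\lambda,(1,k)}(m)$, giving the claim.

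The main (and only non-formal) step is this duality identity; I plan to establish it by restriction to a maximal $F$-torus $T$ of $M_\lambda$ (through which $\lambda$ factors automatically, since $\mathrm{Im}(\lambda)$ lies in the connected center of $M_\lambda$). Over $\overline{F}$, the $T$-weights occurring in $\mathfrak{g}_\lambda(i)$ are exactly the absolute roots $\alpha$ of $T$ in $G$ with $\langle\alpha,\lambda\rangle=i$; the involution $\alpha\mapsto -\alpha$ induces a $\Gamma_F$-equivariant bijection with the $T$-weights on $\mathfrak{g}_\lambda(-i)$ preserving multiplicities, because $\dim\mathfrak{g}_\alpha = \dim\mathfrak{g}_{-\alpha}$. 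Hence the algebraic characters $\det(\mathrm{Ad}|_{\mathfrak{g}_\lambda(i)})$ and $\det(\mathrm{Ad}|_{\mathfrak{g}_\lambda(-i)})$ of $M_\lambda$ restrict on $T$ to mutually inverse characters; and since $M_\lambda$ is connected reductive, any algebraic character of $M_\lambda$ is determined by its restriction to a maximal torus, so these two characters are inverse on all of $M_\lambda$. Taking $|\cdot|_F$ produces the duality identity. This argument is insensitive to $p$ and to the type of $G$ (it uses only the symmetry of the root system), so Lemma \ref{lemme RRao} holds in full generality, with no need for the restrictions that appear in Proposition \ref{proposition noyau}.
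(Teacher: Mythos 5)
Your proof is correct and follows essentially the same route as the paper: the same conjugation identity $\alpha_{-i}(\mathrm{Int}_m(\bar x)) = \mathrm{Ad}_m\circ\alpha_{-i}(\bar x)\circ\mathrm{Ad}_{m^{-1}}$ on the graded pieces, followed by the determinant duality between $\mathfrak{g}_\lambda(i)$ and $\mathfrak{g}_\lambda(-i)$ and the telescoping product $\prod_{i=1}^{k-1}|\det(\mathrm{Ad}_m\,\vert\,\mathfrak{g}_\lambda(i;F))|_F=\bs{\delta}_{\lambda,(1,k)}(m)$. The only difference is that you spell out, via the root-space decomposition, the duality identity that the paper merely asserts ("en observant que"), which is a welcome addition; note only that the well-definedness of $\varphi$ itself (square matrices $A_{-i}$) already presupposes $d_{-i}=d_{k-i}$, which is where Proposition \ref{proposition noyau} enters.
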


% preuve
\begin{proof}
Pour $i\in \{\pm 1,\ldots ,\pm (k-1)\}$, notons $B_i(m)\in M(d_i,F)$ la matrice de l'automorphisme 
${\rm Ad}_m$ de $\mathfrak{g}_\lambda(i;F)$ relativement ˆ la base $(H_{ i,j})$. On a 
$$A_{-i}({\rm Int}_m(\bar{x}))= B_{k-i}(m)A_{-i}(\bar{x}) B_{-i}(m^{-1})$$ 
En observant que $\vert \det_F (B_{-i}(m^{-1}))\vert_F^{1/2}= \vert \det_F(B_i(m))\vert_F^{1/2}$, on obtient 
$$\varphi({\rm Int}_m(\bar{x}))= \vert {\rm \det}_F(B_1(m))\vert_F \cdots \vert {\rm det}_F(B_{k-1}(m))\vert_F \varphi(\bar{x})\ptf $$ 
Or on a 
$$\vert {\rm \det}_F(B_1(m))\vert_F \cdots \vert {\rm det}_F(B_{k-1}(m))\vert_F = 
\vert {\rm det}_F({\rm Ad}_m\,\vert \, \mathfrak{g}_{\lambda,1}(F)/ \mathfrak{g}_{\lambda,k}(F))\vert_F\vg$$ 
ce qui achve la dŽmonstration du lemme. 
\end{proof}

%%%%%%%%%%%%%%%%%%%%%%%%%%%%%%%%%%
\subsection{Le rŽsultat principal}\label{le rŽsultat principal}
La $F$-paire parabolique minimale $(P_0,A_0)$ de $G$ Žtant fixŽe, on choisit un sous-groupe ouvert compact maximal $K$ de $G(F)$ tel que 
$$G(F)= KP_0(F)\ptf$$ Quitte ˆ remplacer $u$ par un conjuguŽ dans $G(F)$, on peut supposer que $u$ est en position standard (cf. \ref{de G ˆ Lie(G)}) et $\lambda\in \check{X}(A_0)$. 
On a donc $$\bsfrY=\mathrm{Int}_K(\scrY)\;(=\{ky k^{-1}\,\vert\, y \in \scrY,\, k\in K\}) \quad \hbox{et} \quad \bsfrX= \mathrm{Int}_K(\scrX)\ptf$$ 

Rappelons qu'il existe constante $c>0$  (qui ne dŽpend que du choix des mesures de Haar) telle que 
pour toute fonction $f\in C^\infty_{\rm c}(G(F))$, on ait  
$$\int_{K\times P(F)} f(kp) \dd k \dd_{\rm r}p = c\int_{G(F)}f(g)\dd g$$ 
o $\dd k$ est la mesure de Haar sur $K$ dŽduite de $\dd g$ par restriction. On normalise $\dd g$ et $\dd_{\mathrm{r}}p$ de telle manire que $c=1$. Pour une fonction 
$\phi$ sur $G(F)$ ou sur $P(F)$, on note ${^p\phi}=\phi^{\,p^{-1}}$ la fonction $\phi\circ {\rm Int}_{p^{-1}}$. Pour $f\in C^\infty_{\rm c}(G(F))$ et $g\in G(F)$, on pose 
$$f^K(g)= \int_{K} f(kgk^{-1})\dd k\ptf$$

% lemme
\begin{lemma}\label{invariance par G}
Soit $\Lambda$ une distribution sur $P(F)$. Si pour tout $p\in P(F)$, on a 
$$\Lambda ({^p\phi})= \bs{\delta}_{P}(p) \Lambda(\phi)\quad\hbox{pour toute fonction}\quad  \phi \in C^\infty_{\rm c}(P(F))\vg$$
alors la distribution $D_\Lambda$ sur $G(F)$ dŽfinie par $$D_\Lambda(f)= \Lambda(f^K)\;(=\Lambda(f^K\vert_{P(F)}))$$ 
est $G(F)$-invariante (pour la conjugaison).
\end{lemma}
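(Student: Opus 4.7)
Puisque $G(F) = K \cdot P(F)$ (car $P \supset P_0$), il suffit de vérifier l'invariance $D_\Lambda(f \circ \mathrm{Int}_g) = D_\Lambda(f)$ séparément pour $g = k_0 \in K$ et pour $g = p_0 \in P(F)$. Le premier cas est immédiat : le changement de variable $k \mapsto k_0^{-1} k$ dans l'intégrale définissant $(f \circ \mathrm{Int}_{k_0})^K$ donne $(f \circ \mathrm{Int}_{k_0})^K = f^K$, d'où $D_\Lambda(f \circ \mathrm{Int}_{k_0}) = D_\Lambda(f)$.

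Le cas $g = p_0 \in P(F)$ repose sur la décomposition d'Iwasawa couplée à l'équivariance hypothétique de $\Lambda$. Pour chaque $k \in K$, on décompose $p_0 k = k^*(k) p^*(k)$ avec $k^*(k) \in K$ et $p^*(k) \in P(F)$ ; ces éléments sont définis modulo $K \cap P(F)$, mais les expressions que l'on introduira seront bien définies car $\bs{\delta}_P$ est triviale sur ce sous-groupe compact. Posons $f_{k'}(y) = f(k' y k'^{-1})$ ; alors pour $h \in P(F)$,
$$(f \circ \mathrm{Int}_{p_0})^K(h) = \int_K f_{k^*(k)}\bigl(\mathrm{Int}_{p^*(k)}(h)\bigr) \dd k.$$
En intervertissant $\Lambda$ et l'intégrale sur $K$ par Fubini, puis en utilisant la forme équivalente $\Lambda(\phi \circ \mathrm{Int}_p) = \bs{\delta}_P(p)^{-1} \Lambda(\phi)$ de l'hypothèse d'équivariance (qui s'obtient de $\Lambda({^p\phi}) = \bs{\delta}_P(p)\Lambda(\phi)$ en remplaçant $p$ par $p^{-1}$), on obtient
$$D_\Lambda(f \circ \mathrm{Int}_{p_0}) = \int_K \bs{\delta}_P(p^*(k))^{-1} \Lambda(f_{k^*(k)}|_{P(F)}) \dd k.$$

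Il s'agit enfin de montrer que le facteur $\bs{\delta}_P(p^*(k))^{-1}$ est exactement compensé par le jacobien du changement de variable $k \mapsto k^*(k)$ dans l'intégrale sur $K$. C'est une identité classique, conséquence de la décomposition $\dd g = \dd k \, \dd_r p$ et de l'invariance à gauche de $\dd g$ : partant de $\int_G \phi(p_0 g) \dd g = \int_G \phi(g) \dd g$, en décomposant $p_0 k = k^*(k) p^*(k)$ puis en effectuant la translation à gauche $p \mapsto p^*(k)^{-1} p$ dans l'intégrale intérieure sur $P(F)$, on obtient l'identité
$$\int_K \bs{\delta}_P(p^*(k))^{-1} F(k^*(k)) \dd k = \int_K F(k) \dd k$$
pour toute fonction continue $F$ sur $K$. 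Appliquée à $F(k) = \Lambda(f_k|_{P(F)})$, elle fournit $D_\Lambda(f \circ \mathrm{Int}_{p_0}) = \Lambda(f^K|_{P(F)}) = D_\Lambda(f)$. L'étape non triviale principale est cette identité jacobienne ; le reste n'est qu'un calcul formel.
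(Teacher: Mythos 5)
Votre d\'emonstration est correcte, mais elle suit une route diff\'erente de celle de l'article. Vous d\'ecomposez $g=k_0p_0$ via $G(F)=KP(F)$ et traitez les deux facteurs s\'epar\'ement, le cas $p_0\in P(F)$ reposant sur l'identit\'e jacobienne $\int_K \bs{\delta}_P(p^*(k))^{-1}F(k^*(k))\dd k=\int_K F(k)\dd k$, que vous d\'eduisez correctement de la normalisation $\int_{G(F)}f(g)\dd g=\int_{K\times P(F)}f(kp)\dd k\dd_{\rm r}p$ et de l'invariance \`a gauche de $\dd g$ (avec la convention $\dd_{\rm l}p=\bs{\delta}_P(p)^{-1}\dd_{\rm r}p$ de l'article, l'exposant $-1$ est le bon). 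L'article proc\`ede de fa\c{c}on plus structurelle : il introduit l'espace $\mathcal{F}_{P}$ des fonctions $\phi$ sur $G(F)$ v\'erifiant $\phi(gp)=\bs{\delta}_P(p)^{-1}\phi(g)$, la fonctionnelle $\alpha(\phi)=\int_K\phi(k)\dd k$ --- dont l'invariance par translations \`a gauche r\'esulte de la surjection $f\mapsto\phi_f$ --- et la fonction $\Theta_f(g)=\Lambda(f\circ{\rm Int}_g\vert_{P(F)})$, qui appartient \`a $\mathcal{F}_{P}$ gr\^ace \`a l'hypoth\`ese d'\'equivariance de $\Lambda$ ; l'invariance de $D_\Lambda=\langle\alpha,\Theta_\bullet\rangle$ en d\'ecoule alors formellement. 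Les deux arguments sont deux faces du m\^eme fait, puisque l'invariance de $\alpha$ sur $\mathcal{F}_{P}$ est pr\'ecis\'ement votre identit\'e jacobienne ; votre version, plus calculatoire, manipule la d\'ecomposition d'Iwasawa point par point, ce qui vous oblige \`a v\'erifier (vous le faites) que les expressions int\'egr\'ees ne d\'ependent pas du choix de $(k^*(k),p^*(k))$ modulo $K\cap P(F)$, tandis que l'argument de l'article \'evacue ces v\'erifications. Deux pr\'ecisions mineures : l'interversion de $\Lambda$ avec $\int_K$ n'est pas un \guill{Fubini} au sens strict ($\Lambda$ est une distribution), mais elle est licite car $k\mapsto f\circ{\rm Int}_{p_0k}\vert_{P(F)}$ est localement constante \`a valeurs dans un sous-espace de dimension finie de $C^\infty_{\rm c}(P(F))$, donc l'int\'egrale est une somme finie ; et l'identit\'e jacobienne doit \^etre \'enonc\'ee pour les fonctions $F$ invariantes \`a droite par $K\cap P(F)$ (sans quoi $F(k^*(k))$ n'a pas de sens), ce qui est bien le cas de $F(k)=\Lambda(f_k\vert_{P(F)})$.
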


% preuve
\begin{proof}Soit $\mathcal{F}_{P}$ l'espace des fonctions localement constantes $\phi$ sur $G(F)$ telles que $\phi(gp)= \bs{\delta}_{P}(p)^{-1}\phi(g)$ 
pour tout $(g,p)\in G(F)\times P(F)$. Pour $f\in C^\infty_{\rm c}(G(F))$ et $g\in G(F)$, posons $$\phi_f(g)= \int_{P(F)}f(gp)\dd_{\rm r} p \ptf$$ 
La fonction $\phi_f$ appartient ˆ $\mathcal{F}_{P}$. L'application linŽaire $C^\infty_{\rm c}(G(F))\rightarrow \mathcal{F}_{P},\, f \mapsto \phi_f$ est 
surjective et $G(F)$-Žquivariante pour l'action de $G(F)$ par translations ˆ gauche sur les deux espaces. 
ConsidŽrons la fonctionnelle linŽaire $\alpha$ sur $\mathcal{F}_{P}$ donnŽe par $$\langle \alpha,\phi\rangle= \int_K \phi(k)\dd k\ptf$$ 
Puisque $$\langle \alpha,\phi _f\rangle=  \int_{G(F)}f(g)\dd g\vg $$  elle est invariante pour l'action de $G(F)$ par translations ˆ gauche sur $\mathcal{F}_{P}$. 

Pour  $f\in C^\infty_{\rm c}(G(F))$ et $g\in G(F)$, posons 
$\Theta_f(g)= \Lambda(f\circ {\rm Int}_g\vert_{P(F)})$. La fonction $\Theta_f$ appartient ˆ $\mathcal{F}_{P}$ et l'application linŽaire $f \mapsto \Theta_f$ est $G(F)$-Žquivariante pour 
l'action de $G(F)$ par conjugaison sur $C^\infty_{\rm c}(G(F))$ et par translations ˆ gauche sur $\mathcal{F}_{P}$: 
$$\Theta_f(xg) = \Theta_{f\circ {\rm Int}_x} (g) \quad \hbox{pour tout}\quad x\in G(F)\ptf$$ 
Comme $$\langle \alpha, \Theta_f \rangle = D_\Lambda(f)\vg$$ cela dŽmontre le lemme.
\end{proof}

Soient $\dd \bar{x}$ et $\dd v$ des mesures de Haar sur $\overline{\scrX}$ et $\scrV= G_{\lambda,k+1}(F)$, choisies de telle manire que pour toute fonction 
$\phi\in C^\infty_{\rm c}(\scrX)$, on ait l'ŽgalitŽ 
$$\int_{\scrX}\phi(x)\dd x= \int_{\overline{\scrX}\times \scrV} \phi(\bar{x}v) \dd \bar{x} \dd v\ptf$$
Pour $x\in \scrX$, on pose $\varphi(x)=\varphi(\bar{x})$. 

% proposition
\begin{proposition}\label{ACV et distribution invariante}
Pour toute fonction $f\in C^\infty_{\rm c}(G(F))$, l'intŽgrale 
$$\int_{\scrY}\varphi(y) f^K(y)\dd y= \int_{\overline{\scrY} \times \scrV} \varphi(\overline{y})f^K(\bar{y} v) \dd \bar{y} \dd v$$ 
est absolument convergente et dŽfinit une distribution $G(F)$-invariante (pour la conjugaison) sur $G(F)$, que l'on note 
$I_{\bsfrY}(f)= I_{\scrY}^P(f^K)$.  
Observons que $I_{\bsfrY}(f)=0$ pour toute fonction 
$f\in C^\infty_{\rm c}(G(F))$ qui s'annule sur $\bsfrY$.
\end{proposition}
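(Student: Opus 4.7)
The plan is to realise $I_{\bsfrY}$ as $D_\Lambda$ for the linear functional
$$\Lambda(\phi) = \int_{\scrY}\varphi(y)\phi(y)\dd y$$
on $C^\infty_{\mathrm{c}}(P(F))$, and then to invoke Lemma \ref{invariance par G}. Once the transformation law $\Lambda({^p\phi}) = \bs{\delta}_P(p)\Lambda(\phi)$ is checked, the conclusion is immediate, because $D_\Lambda(f) = \Lambda(f^K\vert_{P(F)}) = I_{\bsfrY}(f)$ by construction.

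First I would establish the absolute convergence. Since $\scrX = G_{\lambda,k}(F)$ is closed in $G(F)$ (hence in $P(F)$), the intersection $\mathrm{supp}(\phi)\cap \scrX$ is compact, of finite Haar measure in $\scrX$; the function $\varphi$, being built from $\vert\cdot\vert_F$ of polynomial determinants in the coordinates of $\bar{x}$, is continuous and thus bounded on this compact set. As $\scrY$ is $\mathrm{Top}_F$-open in $\scrX$, the integral converges absolutely, and Fubini combined with the relation $\scrY\cdot G_{\lambda,k+1}(F) = \scrY$ recalled in \ref{les F-lames et les F-strates} yields the alternative expression $\int_{\overline{\scrY}\times \scrV}\varphi(\bar{y})f^K(\bar{y}v)\dd\bar{y}\dd v$.

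The transformation rule is the heart of the argument. Fix $p \in P(F)$ and perform the substitution $y = py'p^{-1}$ in $\Lambda({^p\phi})$. This is a bijection of $\scrY$ onto itself --- for $\scrY$ is $P(F)$-stable, as recorded in \ref{les objets} via the homeomorphism $G(F)\times^{P(F)}\scrY \simeq \bsfrY$ --- whose Jacobian on the unipotent group $\scrX$ equals $\bs{\delta}_{\lambda,k}(p)$. To control $\varphi$, write $p = mu$ with $m \in M_\lambda(F)$ and $u \in U_\lambda(F)$; the inclusion $[U_\lambda, G_{\lambda,k}] \subset G_{\lambda,k+1}$ gives $\overline{py'p^{-1}} = \mathrm{Int}_m(\bar{y}')$ in $\overline{\scrX}$, so Lemma \ref{lemme RRao} produces $\varphi(py'p^{-1}) = \bs{\delta}_{\lambda,(1,k)}(m)\varphi(y')$. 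Since $\mathrm{Ad}_u$ is unipotent on all graded pieces, $\bs{\delta}_{\lambda,(1,k)}(u) = 1$, so $\bs{\delta}_{\lambda,(1,k)}(p) = \bs{\delta}_{\lambda,(1,k)}(m)$. Combining the two contributions yields
$$\bs{\delta}_{\lambda,(1,k)}(p)\,\bs{\delta}_{\lambda,k}(p) = \bs{\delta}_{\lambda,1}(p) = \bs{\delta}_P(p),$$
as required.

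Lemma \ref{invariance par G} then delivers the $G(F)$-invariance of $I_{\bsfrY} = D_\Lambda$, and the vanishing on $f$ supported off $\bsfrY$ is automatic: $\bsfrY = \mathrm{Int}_K(\scrY)$ is $K$-stable, so $f\vert_{\bsfrY} = 0$ implies $f^K$ vanishes on $\scrY$ and $I_{\bsfrY}(f) = 0$. I do not anticipate any serious obstacle --- every step is formal given the preceding machinery; the only delicate point is the modular-character bookkeeping for $\varphi$, which is furnished by Lemma \ref{lemme RRao} (itself a consequence of the central Proposition \ref{proposition noyau} that makes $\varphi$ well-defined).
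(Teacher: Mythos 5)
Votre preuve est correcte et suit essentiellement la m\^eme route que celle du papier : celui-ci d\'efinit lui aussi une fonctionnelle $\Lambda(\phi)=\int_{\scrY}\varphi(y)\phi(y)\dd y$ sur $C^\infty_{\rm c}(P(F))$, v\'erifie la loi de transformation $\Lambda({^p\phi})=\bs{\delta}_P(p)\Lambda(\phi)$ en \'ecrivant $p=mu$ et en combinant le lemme \ref{lemme RRao} avec le jacobien de la conjugaison, puis conclut par le lemme \ref{invariance par G}. La seule diff\'erence, purement cosm\'etique, est que le papier effectue le changement de variables sur $\overline{\scrX}\times\scrV$ plut\^ot que directement sur $\scrX$ (les deux calculs de jacobien co\"{\i}ncident puisque $\mathrm{Ad}_u$ est unipotent sur les gradu\'es).
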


% preuve
\begin{proof}
Il est clair que l'intŽgrale $I_{\bsfrY}(f)$ est absolument convergente et dŽfinit une distribution sur $G(F)$. 
Il faut prouver que cette distribution est $G(F)$-invariante. Pour $\phi\in C^\infty_{\rm c}(P(F))$, posons 
$$\Lambda(\phi)= \int_{\overline{\scrY} \times \scrV} \varphi(\overline{y})\phi(\bar{y} v) \dd \bar{y} \dd v\ptf$$ 
Pour $p\in P(F)\;(=P_\lambda(F))$, calculons $\Lambda({^p\phi})$ avec ${^p\phi}=\phi \circ {\rm Int}_{p^{-1}}$. \'Ecrivons $p=mu$ avec $m\in M_\lambda(F)$ et 
$u\in U_\lambda(F)$. On a 
\begin{eqnarray*}
\Lambda({^p\phi})&=& \int_{\overline{\scrY} \times \scrV} \varphi(\overline{y})\phi({\rm Int}_{m^{-1}}(\bar{y}) {\rm Int}_{p^{-1}}(v)) \dd \bar{y} \dd v\\
&=&  \bs{\delta}_{\lambda,(k,k+1)}(m) \bs{\delta}_{\lambda,k+1}(p)\int_{\overline{\scrY} \times \scrV} \varphi({\rm Int}_m(\overline{y}))\phi(\bar{y} v) \dd \bar{y} \dd v\ptf
\end{eqnarray*}
En utilisant \ref{lemme RRao} et l'ŽgalitŽ $\bs{\delta}_{\lambda,(1,k)}(m)\bs{\delta}_{\lambda,(k,k+1)}(m) \bs{\delta}_{\lambda,k+1}(p) = \bs{\delta}_{P}(p)$, on obtient 
$$\Lambda({^p\phi}) = \bs{\delta}_{P}(p) \Lambda(\phi)\ptf$$
On conclut gr‰ce ˆ \ref{invariance par G}.
\end{proof}

% remarque
\begin{remark}
\textup{La $F$-paire parabolique minimale $P_0=M_0\ltimes U_0$ Žtant fixŽe, la $F$-lame standard $\scrY$ est dŽterminŽe 
par la $F$-strate $\bsfrY$. Observons que la distribution $I_{\bsfrY}$ sur $G(F)$ dŽpend bien sžr du choix de la mesure de Haar $\dd x$  sur $\scrX$ mais aussi de celui 
du sous-groupe compact maximal $K$ de $G(F)$ tel que $G(F)=KP_0(F)$. En effet, remplaons par exemple $K$ par $K'=pKp^{-1}$ pour un 
$p\in P_0(F)$ et notons $I'_{\bsfrY}$ la distribution sur $G(F)$ obtenue en remplaant $K$ par $K'$ dans la dŽfinition de 
$I_{\bsfrY}$. Pour $f\in C^\infty_{\mathrm{c}}(G(F))$ et $y\in \scrY$, on a $f^{K'}(y)=({^{p^{-1}\!\!}f})^K(p^{-1}yp)$. On en dŽduit que 
$$I'_{\bsfrY}(f)= I_{\scrY}^P(f^{K'})= \bs{\delta}_P(p)I_{\scrY}^P(({^{p^{-1}}\!\!f})^K)= \bs{\delta}_P(p)I_{\bsfrY}(f)\ptf$$}
\end{remark}

Pour $\Phi\in \mathcal{H}$ et $(g,x)\in G(F)\times \scrX$, posons 
$$\Phi_K (g,x)= \int_K \Phi(kg ,x)\dd k \ptf$$ 
La restriction de $\Phi_K$ ˆ $P(F)\times \scrX$ appartient ˆ $\mathcal{H}^P$.

% lemme
\begin{lemma}\label{lien mu et tmu}
Pour $\Phi \in \mathcal{H}$, on a
$$ \langle \bs{\mu},\Phi \rangle = \langle  \bs{\mu}^P\!, \Phi_K\rangle  \ptf$$
\end{lemma}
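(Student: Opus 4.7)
Mon plan est de ramener directement le calcul à la définition commune des deux fonctionnelles linéaires via l'application $\theta \mapsto \Phi_\theta$. D'après \ref{surjectivit� f donne Phi}\,(i), on fixe une fonction $\theta\in C^\infty_{\rm c}(G(F)\times \scrX)$ telle que $\Phi=\Phi_\theta$, de sorte que $\langle \bs{\mu},\Phi\rangle = \int_{G(F)\times \scrX} \theta(g,x)\dd g\dd x$ par définition de $\bs{\mu}$.

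L'étape centrale consiste à produire un représentant explicite de $\Phi_K\vert_{P(F)\times \scrX}$ dans $C^\infty_{\rm c}(P(F)\times \scrX)$. On pose
$$\theta'(p,x)= \int_K \theta(kp,x)\dd k\vg\quad (p,x)\in P(F)\times \scrX\ptf$$
Cette fonction est localement constante car $K$ est ouvert compact, et elle est à support compact dans $P(F)\times \scrX$ : si $\theta'(p,x)\neq 0$, alors $(kp,x)\in \mathrm{supp}(\theta)$ pour au moins un $k\in K$, donc $p$ appartient à $K\cdot \mathrm{pr}_1(\mathrm{supp}(\theta))\cap P(F)$, qui est compact dans $P(F)$ puisque $K$ l'est et que $P(F)$ est fermé dans $G(F)$. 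La vérification $\Phi_{\theta'}= \Phi_K\vert_{P(F)\times \scrX}$ est immédiate par Fubini: pour $(p_0,x)\in P(F)\times \scrX$,
$$\Phi_{\theta'}(p_0,x)= \int_{P(F)}\!\!\int_K \theta(kp_0p,p^{-1}xp)\dd k \bs{\delta}_{\lambda,k}(p)^{-1}\dd_{\rm r}p= \int_K \Phi_\theta(kp_0,x)\dd k= \Phi_K(p_0,x)\ptf$$

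Il ne reste plus qu'à invoquer la normalisation des mesures de Haar rappelée au début de \ref{le r�sultat principal}, selon laquelle $\int_K\!\int_{P(F)}f(kp)\dd k\dd_{\rm r}p= \int_{G(F)}f(g)\dd g$ pour toute $f\in C^\infty_{\rm c}(G(F))$. Appliquée à la fonction $g\mapsto \theta(g,x)$ pour chaque $x\in \scrX$ fixé, elle donne
$$\langle \bs{\mu}^P\!,\Phi_K\rangle = \int_{P(F)\times \scrX}\!\!\theta'(p,x)\dd_{\rm r}p\dd x = \int_{G(F)\times \scrX}\!\!\theta(g,x)\dd g\dd x= \langle \bs{\mu},\Phi\rangle\ptf$$
Il n'y a pas d'obstacle réel dans cette preuve ; le seul point à contrôler est le caractère compact du support de $\theta'$, qui repose uniquement sur la compacité de $K$ et sur le fait que $P(F)$ est fermé dans $G(F)$.
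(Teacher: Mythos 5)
Votre preuve est correcte et suit essentiellement la m\^eme d\'emarche que celle de l'article : on y pose $f_K(p,x)=\int_K f(kp,x)\dd k$ (votre $\theta'$), on observe que $\Phi_K=\Phi_{f_K}$ et que $\int_{G(F)\times \scrX}f\dd g\dd x=\int_{P(F)\times \scrX}f_K\dd_{\rm r}p\dd x$ par la normalisation $G(F)=KP(F)$. Vous explicitez simplement l'\'echange des int\'egrales et la compacit\'e du support de $\theta'$, que l'article laisse implicites.
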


%preuve
\begin{proof}
Pour $f\in C^\infty_{\rm c}(G(F)\times \scrX)$, on a 
$$\int_{G\times \scrX}f(g,x) \dd g \dd x = \int_{P(F)\times \scrX}f_K(p,x)\dd_{\rm r}p \dd x$$ 
o $f_K\in C^\infty_{\rm c}(P(F)\times \scrX)$ est dŽfinie comme $\Phi_K$. D'autre part pour $\Phi = \Phi_f\;(\in \mathcal{H})$, on a $\Phi_K= \Phi_{\smash{f_K}}$. D'o le lemme. 
\end{proof} 

% corollaire
\begin{theorem}\label{rŽponse aux questions}
Pour toute fonction $f\in C^\infty_{\rm c}(G(F))$, on a l'ŽgalitŽ 
$$I_{\bsfrY}(f)= \int_{P(F)\times^{P(F)} \scrY} f^K(pyp^{-1}) \varphi(y)\dd \bs{\mu}^P\!(p,y);$$ 
l'intŽgrale Žtant absolument convergente.
\end{theorem}

% preuve
\begin{proof} 
Notons $\wt{\mathcal{H}}^{P}$ le sous-espace de $\mathcal{H}^P$ formŽ des fonctions ˆ support dans $P(F)\times \scrY$. 
Soient $$ \wt{\mathcal{H}}^P\rightarrow C^\infty_{\rm c}(\scrY),\, \Phi \mapsto \psi_\Phi \quad \hbox{et}\quad 
C^\infty_{\rm c}(\scrY)\rightarrow \wt{\mathcal{H}}^P,\, \psi \mapsto \Phi^\psi$$ les applications linŽaires dŽfinies par 
$$\psi_\Phi(y)= \varphi(y)^{-1}\Phi(1,y) \quad \hbox{et} \quad \Phi^\psi(p,y)=  \varphi(y)\psi(pyp^{-1})\ptf$$ 
Puisque $$\psi_\phi(pyp^{-1})= \varphi(y)^{-1}\Phi(p,y)\quad\hbox{pour tout}\quad p\in P(F)\vg$$ 
elles sont inverses l'une de l'autre; ce sont donc des isomorphismes\footnote{Notons $\scrX^*$ l'ouvert de $\scrX$ dŽfini par $\scrX^*=\{x\in \scrX\,\vert\, \varphi(x)\neq 0\}$ et 
$\mathcal{H}^{P,*}\;(\supset \wt{\mathcal{H}}^P)$ le sous-espace de $\mathcal{H}^P$ formŽ des fonctions ˆ support dans $P(F)\times \scrX^*$. On obtient de la mme manire un isomorphisme 
$\mathcal{H}^{P,*}\simeq C^\infty_{\rm c}(\scrX^*)$.}. 

Soit $\Lambda'$ la distribution sur $\scrY$ dŽfinie par 
$$\Lambda'(\psi)= \langle \bs{\mu}^P\!,\Phi^\psi\rangle\quad \hbox{pour toute fonction} \quad \psi\in C^\infty_{\rm c}(\scrY)\ptf$$ 
Observons que pour $p\in P(F)$, puisque $\Phi^{^p\psi}= {^p(\Phi^\psi)}$ avec ${^p\psi}= \psi\circ {\rm Int}_{p^{-1}}$, on a bien 
$$\Lambda'({^p\psi})= \bs{\delta}_P(p) \Lambda'(\psi)\ptf$$ 

Soit $\psi\in C^\infty_{\rm c}(\scrY)$. Posons $\Phi= \Phi^\psi$ et Žcrivons $\Phi= \Phi_\theta$ avec 
$\theta \in C^\infty_{\rm c}(P(F)\times \scrY)$ comme en \ref{les objets}: 
$$\Phi(p,y)=\int_{P(F)} \theta(pp',p'^{-1}yp) \bs{\delta}_{\lambda,k}(p') \dd_{\rm r}p'\ptf$$ On a donc 
$$\Lambda'(\psi)=\int_{P(F)\times^{P(F)}\scrY} \Phi(p,y)\dd \bs{\mu}^P(p,y)=\int_{P(F)\times \scrY} \theta(p,y)\dd_{\rm r}p \dd y\ptf$$
Or on a 
\begin{eqnarray*}
\int_{P(F)\times \scrY} \theta(p,y)\dd_{\rm r}p\dd y&=& \int_{P(F)} \left(\int_{\scrY} \theta(p,p^{-1}y p) \dd (p^{-1}yp)\right) \dd_{\rm r}p\\
&= & \int_{P(F)} \left(\int_{\scrY}\theta(p,p^{-1}yp) \bs{\delta}_{1,k}(p)^{-1}\dd y\right) \dd_{\rm r}p\\
&=& \int_{\scrY} \Phi(1,y)\dd y\ptf
\end{eqnarray*}
Puisque $\Phi(1,y) = \varphi(y)\psi(y)$, on a donc
$$\Lambda'(\psi)= \int_{\scrY} \varphi(y) \psi (y)\dd y \ptf$$

On a prouvŽ que pour toute fonction $f\in C^\infty_{\rm c}(G(F))$ et tout sous-ensemble ouvert compact $\omega$ de $\scrY$, on a l'ŽgalitŽ
$$\Lambda'({\bf 1}_{\omega}\cdot f^K\vert_{\scrY}) = \int_{\omega}\varphi(y) f^K(y) \dd y \ptf$$ 
Puisque l'intŽgrale $\int_{\scrY} \varphi(y) f^K(y) \dd y=I_{\bsfrY}(f)$ est absolument convergente, 
le thŽo\-rme de convergence dominŽe assure que l'intŽgrale $$\Lambda'(f^K\vert_{\scrY})= \int_{P(F)\times^{P(F)}\scrY} f^K(py p^{-1}) \varphi(y) \dd \bs{\mu}^P(p,y)$$ l'est aussi et qu'on a l'ŽgalitŽ 
$\Lambda'(f^K\vert_{\scrY})=I_{\bsfrY}(f)$. 
\end{proof}

% corollaire
\begin{corollary}\label{corollaire au thŽorme}
La mesure $\bs{\mu}_{\scrY}^\varphi$ sur $G(F)\times^{P(F)}\scrY$ dŽfinie par $$\dd \bs{\mu}_{\scrY}^\varphi(g,y)= \varphi(y)\dd \bs{\mu}(g,y)$$ 
donne via l'homŽomorphisme $\bs{\pi}: G(F)\times^{P(F)}\scrY \buildrel \simeq\over{\longrightarrow} \bsfrY$ une mesure 
sur $\bsfrY$ qui dŽfinit une mesure de Radon positive $G(F)$-invariante non nulle sur $G(F)$. PrŽcisŽment, pour toute fonction $f\in C^\infty_{\rm c}(G(F))$, on a l'ŽgalitŽ 
$$I_{\bsfrY}(f)=\int_{G(F)\times^{P(F)}\scrY} f(gyg^{-1})\varphi(y)\dd \bs{\mu}(g,y);$$ l'intŽgrale Žtant absolument convergente. 
\end{corollary}

% remarque
\begin{remark}\label{le thm pour Lie(G)}
\textup{Le mme rŽsultat vaut bien sžr pour les $F$-strates de $\NF$. D'ailleurs pour construire la fonction 
$\varphi$ nous avons dž travailler avec la $F$-strate de $\NF$ associŽe ˆ $\bsfrY$ (cf. \ref{de G ˆ Lie(G)}).}
\end{remark}

%%%%%%%%%%%%%%%%%%%%%%%%%%%
\subsection{Le cas o la $G(F)$-orbite est ouverte dans la $F$-strate}\label{le cas o l'orbite est ouverte} 
Continuons avec les hypothses de \ref{le rŽsultat principal}. 
Si $\Omega$ est un sous-ensemble ouvert $P(F)$-invariant de $\scrY$, en remplaant $\scrY$ par $\Omega$ dans la dŽfinition de 
$I_{\bsfrY}$, on obtient une distribution positive $G(F)$-invariante non nulle sur $G(F)$: pour $f\in C^\infty_{\rm c}(G(F))$, on pose 
$$I_{\mathrm{Int}_K(\Omega)}(f)=\int_{\Omega} \varphi(y) f^K(y) \dd y \ptf$$ 
L'intŽgrale est absolument convergente et la distribution $I_{\mathrm{Int}_K(\Omega)}$ annule toute fonction $f\in C^\infty_{\rm c}(G(F))$ qui s'annule sur $\mathrm{Int}_K(\Omega)$. 
Observons que $\mathrm{Int}_K(\Omega) \cap \scrY= \Omega$.  

On suppose dans cette sous-section 
que la $G(F)$-orbite $\ES{O}_{F,u}$ est ouverte dans la $F$-strate $\bsfrY= \bsfrY_{F,u}$. Puisque $$\ES{O}_{F,u}\cap \scrY= \{p^{-1}yp\,\vert \, p\in P(F)\}\bydef \ES{O}_{F,u}^P\quad 
\hbox{et} \quad \ES{O}_{F,u}= {\rm Int}_K(\ES{O}_{F,u}^P)\vg\leqno{(1)}$$ 
cela revient ˆ supposer que la $P(F)$-orbite $\ES{O}_{F,u}^P$ est ouverte dans la $F$-lame $\scrY = \scrY_{F,u}$. Pour $f\in C^\infty_{\rm c}(G(F))$, on pose 
$$I_u(f)=I_{\ES{O}_{F,u}}(f)\ptf$$

% proposition
\begin{proposition}\label{mesure sur l'orbite ouverte}
On suppose que la $P(F)$-orbite $\ES{O}_{F,u}^P$ est ouverte dans la $F$-lame $\scrY=\scrY_{F,u}$. 
\begin{enumerate}
\item[(i)] Le centralisateur $G^u(F)$ de $u$ dans $G(F)$ est unimodulaire.
\item[(ii)] Si $\dd g^u$ est une mesure de Haar sur $G^u(F)$, il existe une constante $c=c(dg^u)>0$ telle que pour toute fonction $f\in C^\infty_{\rm c}(G(F))$, on ait 
$$  \int_{G^u(F)\backslash G(F)} f(g^{-1}u g) \textstyle{\frac{dg}{dg^u}} = c \hskip0.3mm I_{u}(f);$$ l'intŽgrale Žtant absolument convergente.
\end{enumerate}
\end{proposition}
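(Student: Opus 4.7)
The plan is to deduce both assertions from the existence of the non-zero positive $G(F)$-invariant Radon measure on $\ES{O}_{F,u}$ furnished by the preceding theory, combined with the standard uniqueness theorem for invariant measures on quotients of locally compact groups.

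First I would record that, since $\ES{O}_{F,u}^P$ is by hypothesis open and $P(F)$-stable in $\scrY$, the distribution $I_u = I_{\ES{O}_{F,u}}$ defined at the start of this subsection is positive, $G(F)$-invariant, and non-zero (the latter because $\varphi>0$ on $\overline{\scrY}\neq\emptyset$, cf.~\ref{le lemme-cl\'e}). Applying the corollary \ref{corollaire au th\'eor\`eme} to the open subset $\ES{O}_{F,u}^P\subset \scrY$ gives
$$I_u(f) = \int_{G(F)\times^{P(F)}\ES{O}_{F,u}^P} f(gyg^{-1})\,\varphi(y)\,\dd\bs{\mu}(g,y)$$
for every $f\in C^\infty_{\rm c}(G(F))$, the integral being absolutely convergent.

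Next I would identify $\ES{O}_{F,u}$ with $G^u(F)\backslash G(F)$ as topological $G(F)$-spaces. The separable descent recalled at the beginning of \ref{\'enonc\'e} yields $G^u(F^{\rm s\acute{e}p})\subset P_\lambda(F^{\rm s\acute{e}p})$ and hence $G^u(F)\subset P(F)$, so $G^u(F)$ is the stabilizer of $u$ both in $P(F)$ and in $G(F)$. The conjugation map therefore induces a continuous bijection $G^u(F)\backslash G(F)\to \ES{O}_{F,u}$, $\bar g\mapsto g^{-1}ug$. Combining the homeomorphism $\bs{\pi}:G(F)\times^{P(F)}\scrY\to \bsfrY$ of \cite[2.5.13]{L}, restricted to $G(F)\times^{P(F)}\ES{O}_{F,u}^P$, with the canonical identification $G(F)\times^{P(F)}\ES{O}_{F,u}^P\simeq G^u(F)\backslash G(F)$ would then show that this continuous bijection is in fact a homeomorphism.

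For assertion (i), the group $G(F)$ is unimodular because $G$ is reductive, and the preceding step equips $G^u(F)\backslash G(F)$ with a non-zero positive $G(F)$-invariant Radon measure, which by the standard criterion forces $G^u(F)$ itself to be unimodular. For assertion (ii), a Haar measure $\dd g^u$ on the now unimodular group $G^u(F)$ determines a unique (up to a positive scalar) positive $G(F)$-invariant Radon measure $\dd g/\dd g^u$ on $G^u(F)\backslash G(F)$; the measure coming from $I_u$ via the homeomorphism of the previous paragraph must therefore equal $c\cdot\dd g/\dd g^u$ for some $c=c(\dd g^u)>0$, which yields the stated equality, absolute convergence of the orbital integral being inherited from that of $I_u(f)$. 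The principal obstacle I foresee lies in the second step, namely upgrading the continuous bijection $G^u(F)\backslash G(F)\to\ES{O}_{F,u}$ to a homeomorphism; this rests decisively on the detailed topological content of $\bs{\pi}$ from \cite[2.5.13]{L} together with the inclusion $G^u(F)\subset P(F)$, the latter being available via separable descent even when $u$ is not separable.
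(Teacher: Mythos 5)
Your proposal is correct and follows essentially the same route as the paper: one views $I_u$ as a non-zero positive $G(F)$-invariant Radon measure on $\ES{O}_{F,u}\simeq G^u(F)\backslash G(F)$, deduces unimodularity of $G^u(F)$ from the existence of such a measure, and then invokes uniqueness up to a positive scalar. The only point you compress is the last one: the uniqueness statement a priori gives the equality only for test functions compactly supported \emph{in the orbit}, and the paper passes to general $f\in C^\infty_{\rm c}(G(F))$ by truncating with $\mathbf{1}_\omega$ and applying dominated convergence, using the already-established absolute convergence of $I_u(f)$ — which is exactly what your phrase \guill{inherited from that of $I_u(f)$} must mean.
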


\begin{proof}
L'application $f\mapsto I_{\bsfrY}(f)$ dŽfinit \textit{a fortiori} une mesure de Radon positive $G(F)$-invariante (pour la conjugaison) non nulle $\bs{\mu}_u=\bs{\mu}_{\ES{O}_{F,u}}$ sur $\ES{O}_{F,u}$: pour toute fonction 
$\psi\in C^\infty_{\rm c}(\ES{O}_{F,u})$, on a $$\langle \bs{\mu}_u,\psi\rangle = \int_{K \times \ES{O}_{F,u}^P}\varphi(y)\psi(k y k^{-1}) \dd k \dd y\ptf$$ Via l'homŽomorphisme 
$$ G^u(F)\backslash G(F) \buildrel\simeq\over{\longrightarrow} \ES{O}_{F,u}\vgq g \mapsto g^{-1}ug\vg$$ on a donc une mesure de Radon positive $G(F)$-invariante (ˆ droite) non nulle 
sur $G^u(F)\backslash G(F)$. L'existence d'une telle mesure entra"ne (i). Comme une telle mesure est unique ˆ une constante ($>0$) prs, si $dg^u$ est une mesure de Haar sur 
$G^u(F)$, il existe une constante $c>0$ telle que pour toute fonction $\psi\in C^\infty_{\rm c}(\ES{O}_{F,u})$, on ait  
$$\int_{G^u(F)\backslash G(F)}\psi(g^{-1}u g ) \textstyle{\frac{\dd g}{\dd g^u}}=  c\hskip0.3mm \langle \bs{\mu}_u, \psi \rangle \ptf$$ 
On en dŽduit que pour toute fonction $f\in C^\infty_{\rm c}(G(F))$ et tout 
sous-ensemble ouvert compact $\omega$ de $\ES{O}_{F,u}$, on a l'ŽgalitŽ 
$$\int_{G^u(F)\backslash G(F)} {\bf 1}_\omega(g^{-1}u g) f(g^{-1}u g) \textstyle{\frac{dg}{dg^u}}=  c\hskip0.3mm\langle \bs{\mu}_u, {\bf 1}_\omega \cdot f\vert_{\ES{O}_{F,u}}\rangle \ptf$$ 
Puisque l'intŽgrale $$\int_{ \ES{O}_{F,u}^P}\varphi(y) f^K(y) \dd y = I_{u}(f)$$ est absolument convergente, 
d'aprs le thŽorme de convergence dominŽe, cela prouve (ii). 
\end{proof}

% corollaire
\begin{corollary}
Supposons que toutes les $P(F)$-orbites $\ES{O}_{F,y}^P$ avec $y\in \scrY$ soient ouvertes (et donc aussi fermŽes) dans $\scrY$. Choisissons un ensemble reprŽsentants $\{u_i\,\vert\, i\in I\}\subset \scrY$ des $P(F)$-orbites dans $\scrY$. 
Pour chaque $i\in I$, normalisons la mesure de Haar $\dd g^{u_i}$ sur $G^{u_i}(F)$ par $c(\dd g^{u_i})=1$. Alors pour toute fonction 
$f\in C^\infty_{\rm c}(G(F))$, on a $$I_{\bsfrY}(f) = \sum_{i\in I} I_{u_i}(f)\ptf$$
\end{corollary}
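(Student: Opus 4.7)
Le plan est d'exploiter la d\'ecomposition de la $F$-lame $\scrY$ en $P(F)$-orbites, qui sont par hypoth\`ese ouvertes (et donc aussi ferm\'ees) dans $\scrY$. On obtient ainsi la partition
$$\scrY = \bigsqcup_{i \in I} \ES{O}_{F,u_i}^P$$
par des ouverts-ferm\'es deux \`a deux disjoints. Cette structure, combin\'ee \`a la convergence absolue de l'int\'egrale d\'efinissant $I_{\bsfrY}(f)$ \'etablie en \ref{ACV et distribution invariante}, permettra de scinder cette int\'egrale en une somme index\'ee par $I$.

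Plus pr\'ecis\'ement, pour toute fonction $f\in C^\infty_{\rm c}(G(F))$, je d\'eduirais de la convergence absolue de $\int_{\scrY}\varphi(y)f^K(y)\dd y$ et du fait que les $\ES{O}_{F,u_i}^P$ sont deux \`a deux disjoints l'\'egalit\'e
$$I_{\bsfrY}(f) = \sum_{i\in I} \int_{\ES{O}_{F,u_i}^P}\varphi(y)f^K(y)\dd y,$$
la s\'erie de droite \'etant absolument convergente (avec au plus une quantit\'e d\'enombrable de termes non nuls). Chaque terme de cette somme s'identifie alors avec $I_{u_i}(f)$ par la d\'efinition rappel\'ee au d\'ebut de \ref{le cas o� l'orbite est ouverte}, appliqu\'ee \`a l'ouvert $P(F)$-invariant $\Omega = \ES{O}_{F,u_i}^P$, ce qui donne l'\'egalit\'e cherch\'ee.

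Le choix de normalisation $c(\dd g^{u_i})=1$ n'intervient pas directement dans cette d\'ecomposition : il sert seulement \`a \'eliminer la constante dans l'\'ecriture de chaque $I_{u_i}(f)$ comme int\'egrale orbitale usuelle fournie par \ref{mesure sur l'orbite ouverte}\,(ii), et l'\'egalit\'e $I_{\bsfrY}(f)=\sum_{i\in I}I_{u_i}(f)$ vaut ind\'ependamment de ce choix. Il n'y a donc pas d'obstacle s\'erieux attendu : le corollaire est une cons\'equence formelle de l'additivit\'e de l'int\'egrale sur une partition en ouverts-ferm\'es, combin\'ee au th\'eor\`eme \ref{r�ponse aux questions} et \`a la proposition \ref{mesure sur l'orbite ouverte} qui fournissent les deux interpr\'etations des deux membres. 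La seule finesse est de s'assurer que la s\'erie reste absolument convergente, ce qui d\'ecoule de la convergence absolue d\'ej\`a \'etablie pour $I_{\bsfrY}(f)$.
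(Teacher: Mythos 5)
Votre argument est correct et co\"{\i}ncide avec celui que le papier sous-entend (le corollaire y est \'enonc\'e sans d\'emonstration, comme cons\'equence imm\'ediate de \ref{ACV et distribution invariante} et \ref{mesure sur l'orbite ouverte}) : la partition de $\scrY$ en $P(F)$-orbites ouvertes-ferm\'ees, n\'ecessairement d\'enombrable puisque $\scrX$ est s\'eparable, scinde l'int\'egrale absolument convergente $\int_{\scrY}\varphi(y)f^K(y)\dd y$ en la somme des $I_{u_i}(f)$, et la normalisation $c(\dd g^{u_i})=1$ ne sert qu'\`a identifier chaque terme \`a l'int\'egrale orbitale usuelle via \ref{mesure sur l'orbite ouverte}\,(ii).
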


% remarque
\begin{remark}
\textup{On a la mme proposition pour $Y\in \NF$, sous l'hypothse o la $G(F)$-orbite $\ES{O}_{F,Y}$ est ouverte dans la $F$-strate $\bsfrY_{F,Y}$
}\end{remark}

%%%%%%%%%%%%%%%%%%%%%%%%%%%%%%%%%%%%
\subsection{Comparaison avec les rŽsultats connus}\label{comparaison}
Pour $x\in G$, on note $\ES{O}_x$ l'orbite gŽomŽtrique ${\rm Int}_G(x)$; de mme pour 
$X\in \mathfrak{g}$, on pose $\ES{O}_X=\textrm{Ad}_G(X)$.

Supposons tout d'abord $p=1$. Dans ce cas on sait que la $F$-strate $\bsfrY=\bsfrY_{F,u}$ d'un ŽlŽment $u\in \UF$ est l'ensemble $\ES{O}_u(F)$ des points $F$-rationnels de 
l'orbite gŽomŽtrique $ \ES{O}_u$. Cet ensemble est rŽunion \textit{finie} de $G(F)$-orbites, l'une d'elle Žtant la $G(F)$-orbite 
$\ES{O}_{F,u}$, et chacune de ces $G(F)$-orbites est ouverte et fermŽe dans $\bsfrY$. La mesure sur $\bsfrY$ dŽfinie par \ref{corollaire au thŽorme} 
donne une mesure de Radon positive $G(F)$-invariante non nulle sur $\ES{O}_{F,u}$ qui (d'aprs \ref{mesure sur l'orbite ouverte}) dŽfinit une mesure de Radon sur $G(F)$; c'est le thŽorme 
de Deligne-Ranga Rao \cite{RR}.

\vskip1mm
Supposons maintenant $p>1$. Pour les ŽlŽments sŽparables de $\UF$, resp. $\NF$, on sait d'aprs \cite[3.5.6]{L} que les points $F$-rationnels de l'orbite gŽomŽtrique sont contenus dans la $F$-strate: 
pour $u\in \UF$ sŽparable, on a l'inclusion $\ES{O}_u(F)\subset \bsfrY_{F,u}$; et pour $Y\in \NF$ sŽparable, on a l'inclusion $\ES{O}_Y(F)\subset \bsfrY_{F,Y}$. Si de plus $p$ est trs bon pour 
$G$, alors tous les ŽlŽments de $\UF$, resp. $\NF$, sont sŽparables (cf. \cite[3.5.4]{L}) et les inclusions prŽcŽdentes sont des ŽgalitŽs \cite[3.5.7]{L}. 

Avant de poursuivre, introduisons une notion intermŽdiaire entre \guill{$p$ trs bon} et \guill{$p$ bon} pour $G$, celle de groupe \guill{$F$-standard} (McNinch \cite[2.2, def.~3]{MN}). Le groupe $G$ 
est dit \textit{$F$-standard} s'il existe une $F$-isogŽnie \textit{sŽparable} (donc centrale) entre $G$ et un groupe rŽductif connexe $H$ dŽfini sur $F$ telle que:
\begin{enumerate}
\item[(S1)] le groupe dŽrivŽ $H_{\rm der}$ de $H$ soit \text{simplement connexe};
\item[(S2)] $p$ soit \text{bon} pour $H$, i.e. pour $H_{\mathrm{der}}$;
\item[(S3)] il existe une forme bilinŽaire $H$-invariante non dŽgŽnŽrŽe sur $\mathfrak{h}= {\rm Lie}(H)$.
\end{enumerate}
Observons que la condition (S3) porte sur $\mathfrak{h}$ et non sur $\mathfrak{h}_{\rm der}= {\rm Lie}(H_{\rm der})$ et qu'elle est prŽservŽe par les isogŽnies sŽparables. 
Elle assure que $\mathfrak{h}$ est une somme directe d'algbres de Lie de la forme suivante (cf. \cite[2.9]{J}): une algbre de Lie simple 
qui n'est pas de type ${\bf A}_l$; une algbre de Lie isomorphe ˆ $\mathfrak{sl}_n$ avec $(p, n)=1$; une algbre de Lie isomorphe ˆ $\mathfrak{gl}_n$; une 
algbre de Lie commutative (formŽe d'ŽlŽments centraux dans $\mathfrak{h}$). En particulier $p$ n'est pas forcŽment \textit{trs bon} pour $H$. 
NŽanmoins, la description ci-dessus assure que $\mathfrak{h}$ est muni d'une forme bilinŽaire $H$-invariante \textit{symŽtrique} non dŽgŽnŽrŽe; 
observons que tout comme (S3), cette propriŽtŽ est invariante par isogŽnie sŽparable. 

Si $G$ est $F$-standard, on sait d'aprs \cite[prop.~6]{MN} que toutes les orbites gŽomŽ\-triques nilpotentes dans 
$\mathfrak{h}$, resp. $\mathfrak{g}$, sont sŽparables. 
 
La proposition suivante, jointe ˆ \ref{mesure sur l'orbite ouverte}, donne le thŽorme de Deligne-Ranga Rao pour les orbites rationnelles. 

\begin{proposition}\label{le cas trs bon}
Supposons que $G$ soit $F$-standard. 
\begin{enumerate}
\item[(i)] Pour $u\in \UF$, la $G(F)$-orbite $\ES{O}_{F,u}$ est ouverte dans la $F$-strate $\bsfrY_{F,u}$. 
\item[(ii)] Pour $Y\in \NF$, la $G(F)$-orbite $\ES{O}_{F,Y}$ est ouverte dans la $F$-strate $\bsfrY_{F,Y}$. 
\end{enumerate}
\end{proposition}

% preuve
\begin{proof} 
Commenons par le point (ii). Soient $Y\in \NF\smallsetminus \{0\}$ (pour $Y=0$ il n'y a rien ˆ dŽmontrer), $\lambda\in \Lambda_{F,Y}^{\rm opt}$ et $k = m_Y(\lambda)$. Posons $Y'=Y(k)\in \mathfrak{g}_\lambda(k;F)$. 
Puisque $\lambda$ est $(F,Y')$-optimal (d'aprs \cite[2.8.2]{L}) et $Y'$ est sŽparable, on a  
$$\mathfrak{g}^{Y'} = {\rm Lie}(G^{Y'}) \subset \mathfrak{p}_{\lambda} = \mathfrak{g}_{\lambda, 0}\ptf$$ 
Puisque $G$ est $F$-standard, on sait que:
\begin{itemize}
\item toutes les orbites gŽomŽtriques nilpotentes de $\mathfrak{g}$ sont sŽparables;
\item il existe une forme bilinŽaire $G$-invariante \textit{symŽtrique} non dŽgŽnŽrŽe sur $\mathfrak{g}$.
\end{itemize} 
On en dŽduit comme dans la preuve de \cite[lemma~5.7]{J} que $$[Y'\!, \mathfrak{g}_\lambda(i-k)]= \mathfrak{g}_\lambda(i)\quad \hbox{pour tout} \quad i\in \mbb{N}^*\ptf\leqno{(1)}$$ 
Comme $\mathfrak{p}_\lambda = \bigoplus_{i\geq k} \mathfrak{g}_\lambda(i-k)$, on obtient que $[Y'\!,\mathfrak{p}_\lambda]= \mathfrak{g}_{\lambda,k}$ puis (par approximations successives) 
que $[Y,\mathfrak{p}_\lambda]= \mathfrak{g}_{\lambda,k}$. Par consŽquent  
l'application $P_\lambda\rightarrow \mathfrak{g}_{\lambda ,k},\, p \mapsto {\rm Ad}_p(Y)$ est submersive. Elle induit une application ouverte $P_\lambda(F)\rightarrow \mathfrak{g}_{\lambda,k}(F)$ 
qui assure que la $P_\lambda(F)$-orbite de $Y$ est ouverte dans la $F$-lame $\scrY_{F,Y}$. 

Prouvons (i). Soit $u \in \UF\smallsetminus \{1\}$. On suppose que $u$ est en position standard. Soient $\lambda$ l'unique ŽlŽment 
de $\check{X}(A_0)$ qui soit $(F,u)$-optimal et $k= m_u(\lambda)$. Posons $Y=j_0^{-1}(u)\in \mathfrak{g}_{\lambda,k}(F)$. Le $F$-isomorphisme 
$j_0: \mathfrak{u}_0\rightarrow U_0(F)$ induit par restriction et passage aux quotients un $F$-isomorphisme de variŽtŽs 
$$j_\lambda(k): \mathfrak{g}_\lambda(k) \rightarrow G_\lambda(k)= G_{\lambda,k}/G_{\lambda ,k+1}\ptf$$ 
D'aprs \cite[3.4.2]{L}, l'isomorphisme $j_\lambda(k)$ est $M_\lambda$-Žquivariant et c'est un isomorphisme de groupes\footnote{Observons que si de plus $G$ est $F$-dŽployŽ, alors 
$j_\lambda(k)$ munit $G_\lambda(k)$ d'une structure de $M_\lambda$-module dŽfini sur $F$ (loc.~cit.).}. Notons $\ES{O}_{F,u}^{M_\lambda}$ la $M_\lambda(F)$-orbite de $u$. 
Compte-tenu de ce que l'on a prouvŽ pour $Y$, on obtient via $j_\lambda(k)$ que l'ensemble $\ES{O}_{F,u}^{M_\lambda}G_{\lambda,k+1}(F)$ est ouvert dans $G_{\lambda,k}(F)$. 
D'autre part gr‰ce ˆ (1) on obtient (par approximations successives) que la $U_\lambda(F)$-orbite de $u$ est Žgale ˆ $uG_{\lambda,k+1}(F)$. Cela prouve 
que la $P_\lambda(F)$-orbite de $u$ est Žgale ˆ $\ES{O}_{F,u}^{M_\lambda}G_{\lambda,k+1}(F)$, donc ouverte dans la $F$-lame $\scrY_{F,u}$. On en dŽduit que 
la $G(F)$-orbite $\ES{O}_{F,u}$ est ouverte dans la $F$-strate $\bsfrY_{F,u}$. 
\end{proof}

% remarque
\begin{remark}\label{hypothse optimale}
\textup{
\begin{enumerate}
\item[(i)]
Si $p$ est \textit{bon} pour $G$, d'aprs la thŽorie de Bala-Carter (cf. \cite[4]{J}), il n'y a qu'un nombre fini d'orbites gŽomŽtriques nilpotentes dans $\mathfrak{g}$. 
Si de plus on sait que toutes les orbites gŽomŽtriques des ŽlŽments nilpotents de $\mathfrak{g}(F)$ sont sŽparables, e.g. si $p$ est \textit{trs bon}, alors d'aprs McNinch \cite[theorem~40]{MN}, il n'y a qu'un nombre fini 
de $G(F)$-orbites nilpotentes dans $\mathfrak{g}(F)$. 
\item[(ii)]L'hypothse \guill{$G$ est $F$-standard} est meilleure que \guill{$p$ est trs bon} mais elle n'est pas optimale. Par exemple $G=\textrm{PGL}_{mp}$ n'est pas $F$-standard; or dans 
ce cas les $F$-strates de $\UF$, resp. $\NF$, co\"{\i}ncident avec les $G(F)$-orbites et les points 
(i) et (ii) de \ref{le cas trs bon} sont trivialement vrais. 
\end{enumerate}
}
\end{remark}

%%%%%%%%%%%%%%%%%%%%%%%%%%%%%%%%%%%
\subsection{DŽsintŽgration de la mesure sur la $F$-lame}\label{dŽsintŽgration}On prouve dans cette sous-section que sous certaines hypothses 
moins fortes que celle de \ref{le cas o l'orbite est ouverte} (voir \ref{hypothses H}), on peut dŽsintŽgrer la mesure  $\varphi(y) \dd y$ au voisinage de $u$ dans la $F$-lame $\scrY$ et en dŽduire 
la convergence de l'intŽgrale orbitale unipotente associŽe ˆ la $G(F)$-orbite $\ES{O}_{F,u}$. Cela amŽliore les rŽsultats connus (cf. \ref{comparaison}). 

Soient $u\in \UF\smallsetminus \{1\}$, $\lambda\in \Lambda_{F,u}^{\rm opt}$ et $k=m_u(\lambda)$. On suppose comme en \ref{le rŽsultat principal} que 
$u$ est en position standard et que $\lambda\in \check{X}(A_0)$. On pose $P=P_\lambda$, $\scrY= \scrY_{F,u}$ et $\bsfrY=\bsfrY_{F,u}$. 

Pour $y\in \scrY$, si le centralisateur $G^y(F)$ de $y$ dans $G(F)$ est unimodulaire, le choix d'une mesure de Haar $dg^y$ sur $G^y(F)$ dŽfinit 
une \guill{mesure de Haar} $\dd \bar{g}_y = \frac{\dd g}{\dd g^y}$ sur l'espace quotient $G^y(F)\backslash G(F)$, \cad une fonctionnelle 
linŽaire positive non nulle et $G(F)$-invariante ˆ droite sur $C^\infty_{\rm c}(G^y(F)\backslash G(F))$. La question est: via l'homŽomorphisme 
$$G^y(G)\backslash G(F) \rightarrow \ES{O}_{F,y},\, g \mapsto g^{-1} y g\vg$$ cette fonctionnelle linŽaire dŽfinit-elle une mesure de Radon sur $G(F)$? Autrement dit, 
pour toute fonction $f\in C^\infty_{\rm c}(G(F))$, l'intŽgrale orbitale $$I_y(f)= \int_{G^y(F)\backslash G(F)} f(g^{-1} y g) \dd \bar{g}_y\leqno{(1)}$$ est-elle absolument 
convergente? 

Pour $y\in \scrY$, commenons par rappeler la notion de \guill{mesure de Haar} sur l'espace quotient $G^y(F)\backslash P(F)$. Mme si ce n'est pas nŽcessaire, on suppose toujours 
que $G^y(F)$ est unimodulaire. Notons $\ES{F}_y$ l'espace des fonctions $h\in C^\infty(P(F))$ 
telles que:
\begin{itemize}
\item $h(gp) =\bs{\delta}_{P}(g)^{-1} h(p)$ pour tout $g\in G^y(F)$ et tout $p\in P(F)$;
\item il existe un sous-ensemble compact $\Omega_h$ de $P(F)$ tel que le support de $f$ soit contenu dans $G^y(F)\Omega_h$.
\end{itemize}

% remarque
\begin{remark}
\textup{Observons que pour $y\in \scrY$, puisque $\varphi(y)\neq 0$, pour tout $g\in G^y(F)$, on a $\bs{\delta}_{\lambda,(1,k)}(g)=1$ \cad $\bs{\delta}_P(g)= \bs{\delta}_{\lambda,k}(g)$.
}\end{remark}

Ë une constante $>0$ prs, il existe une unique fonctionnnelle linŽaire positive non nulle $\bs{\nu}_{y}$ sur $\ES{F}_{y}$ qui soit 
invariante par translations ˆ droite par $P(F)$. Concrtement, on choisit une mesure de Haar $\dd g^y$ sur $G^y(F)$. Toute fonction 
$h\in \ES{F}_{y}$ s'Žcrit $$h(p)= \int_{G^y(F)} \Psi(gp)\bs{\delta}_{P}(g) \dd g^y $$ pour une fonction $\Psi\in C^\infty_{\rm c}(P(F))$. Cette fonction n'est pas 
unique mais l'intŽgrale $\int_{P(F)} \Psi(p)\dd_{\rm r}p$ est bien dŽfinie (i.e. elle ne dŽpend pas du choix de $\Psi$); on la note 
$$\langle \bs{\nu}_y,h \rangle = \int_{G^y(F)\backslash P(F)} h(p)\dd \bs{\nu}_y(p)\ptf$$ 
La mesure de Haar ˆ droite $\dd_{\rm r}p$ sur $P(F)$ Žtant fixŽe, la mesure de Haar $\dd g^y$ sur $G^y(F)$ dŽtermine $\bs{\nu}_y$ et rŽciproquement. 

La question posŽe plus haut, ˆ savoir la convergence absolue de l'intŽgrale (1), est Žquivalente ˆ: via l'homŽomorphisme 
$$G^y(F)\backslash P(F) \rightarrow \ES{O}_{F,y}^P,\, p \mapsto p^{-1}yp\vg$$ la fonctionnelle linŽaire $\bs{\nu}_y$ dŽfinit-elle une mesure de Radon sur 
$P(F)$? Autrement dit, pour toute fonction $\phi\in C^\infty_{\rm c}(P(F))$, l'intŽgrale orbitale
$$I^P_y(\phi) = \int_{G^y(F)\backslash P(F)} \phi(p^{-1} y p) \bs{\delta}_P(p)^{-1} \dd \bs{\nu}_y(p)\leqno{(2)}$$ 
est-elle absolument convergente?

% remarque
\begin{remark}\label{ŽgalitŽ formelle}
\textup{Rappelons que $G(F)= KP(F)=P(F)K$ et que les mesures de Haar $\dd g$, $\dd k$ et $\dd_{\rm r}p$ sont normalisŽes de telle 
manire que pour toute fonction $f\in C^\infty_{\rm c}(G(F))$, on ait l'ŽgalitŽ $\int_{G(F)}f(g)\dd g = \int_{K\times P(F)} f(kp) \dd k\dd_{\rm r}p$. Alors pour  
$f\in C^\infty_{\rm c}(G(F))$, on a l'ŽgalitŽ (au moins formellement) $$I_y(f)= I_y^P(f^K)\ptf$$}
\end{remark}

Soit $$\scrY/P(F)\;(= \bsfrY/G(F))$$ l'espace des $P(F)$-orbites dans $\scrY$, que l'on munit de la topologie quotient. 
Si $\omega$ est un ouvert de $\scrY$, l'ensemble $\bigcup_{p\in P(F)}p^{-1}\omega p$ est encore ouvert dans $\scrY$. 
Par consŽquent le morphisme quotient $$\bs{q}:\scrY \rightarrow \scrY/P(F)$$ est une application ouverte. Pour $y\in \scrY$, on note $$\ES{O}_{F,y}^P=\{p^{-1}yp\,\vert\, p\in P(F)\}$$ la $P(F)$-orbite 
de $y$, que l'on identifie ˆ l'ŽlŽment $\bs{q}(y)$ de $\scrY/P(F)$.

On veut dŽcrire le comportement des $P(F)$-orbites de $\scrY$ au voisinage de $u$, en imposant 
certaines conditions de \guill{rŽgularitŽ} aux centralisateurs $G^y(F)$ des ŽlŽments $y\in \scrY$ au voisinage de $u$. Pour cela 
il est commode d'introduire la dŽfinition suivante.

% dŽfinition
\begin{definition}\label{def slice}
\textup{Pour un sous-groupe fermŽ $\ES{Z}$ de $P(F)$, un sous-ensemble $S$ de $\scrY$ est appelŽ \textit{$\ES{Z}$-feuillet}\footnote{\textit{$\ES{Z}$-slice} en anglais.} (dans $\scrY$ relativement ˆ l'action 
de $P(F)$)
s'il vŽrifie les conditions suivantes: 
\begin{itemize}
\item $S$ est $\ES{Z}$-invariant, i.e. ${\rm Int}_{\ES{Z}}(S)= S$;
\item $\wt{S}= {\rm Int}_{P(F)}(S)$ est ouvert dans $\scrY$ et $S$ est fermŽ dans $\wt{S}$;
\item pour tout $p\in P(F)$, on a $p^{-1}Sp \cap S \neq \emptyset\Rightarrow p\in \ES{Z}$. 
\end{itemize}
La troisime condition assure que pour tout $s\in S$, le centralisateur $G^s(F)$ de $s$ dans $G(F)$, qui est un sous-groupe de $P(F)$, 
est contenu dans $\ES{Z}$. Si $y\in \scrY$, un \textit{feuillet pour $y$} (dans $\scrY$ relativement ˆ l'action de $P(F)$) est 
un $G^y(F)$-feuillet $S$ tel que: 
\begin{itemize}
\item $y\in S$;
\item $G^s(F)=G^y(F)$ pour tout $s\in S$. 
\end{itemize}
}
\end{definition}

Si $S$ est un $\ES{Z}$-feuillet pour un sous-groupe fermŽ $\ES{Z}$ de $P(F)$, alors $S$ est localement fermŽ dans $\scrY$. On le munit de la topologie 
induite par celle de $\scrY$, ce qui en fait un \textit{td-espace} au sens de Bernstein-Zelevinski \cite{BZ}, i.e. Hausdorff, localement compact et totalement discontinu. Soit $P(F)\times^{\ES{Z}}S$ le quotient $(P(F)\times S)/\ES{Z}$ pour l'action ˆ droite 
de $\ES{Z}$ donnŽe par $(p,s)\cdot z= (pz, z^{-1}sz)$. On munit $P(F)\times S$ de la topologie produit et $P(F)\times^{\ES{Z}}S$ de la topologie quotient. Alors l'application 
$P(F)\times S \rightarrow \wt{S},\, (p,s)\mapsto psp^{-1}$ se quotiente en un homŽomorphisme 
$$\alpha:P(F)\times^{\ES{Z}} S \buildrel \simeq\over{\longrightarrow} \wt{S}\ptf$$ 
Observons que pour $y\in \wt{S}$, il existe un $p\in P(F)$ tel que $pyp^{-1}\in S$ et le centralisateur $G^y(F)$ de $y$ dans $G(F)$ vŽrifie 
$\mathrm{Int}_p(G^y(F))= G^{pyp^{-1}}(F)\subset \ES{Z}$. 

Supposons de plus que pour tout $s\in S$, l'inclusion $G^s(F)\subset  \ES{Z}$ soit une ŽgalitŽ, \cad que $S$ soit un feuillet pour un (i.e. pour tout) $s\in S$. Alors 
$$P(F)\times^{\ES{Z}}S=(P(F)/\ES{Z}) \times S\ptf$$ La projection sur le second facteur $P(F)\times S \rightarrow S$ se quotiente en 
une application continue (surjective) $\beta: P(F)\times^{\ES{Z}} S \rightarrow S$ et l'application composŽe $\beta\circ \alpha^{-1}: \wt{S} \rightarrow S$ 
se quotiente en une bijection continue $$\bs{s}:\wt{S}/P(F) \rightarrow S\ptf$$ Pour tout $y\in \wt{S}$, on a $S\cap \ES{O}_{F,y}^P= \{\beta \circ \alpha^{-1}(y)\}$. 
Puisque $\bs{s}$ est une section continue du morphisme quotient 
$\bs{q}\vert_{\wt{S}}: \wt{S}\rightarrow \wt{S}/P(F)$, l'application $\bs{q}\vert_S: S  \rightarrow  \wt{S}/P(F)$ est 
un homŽomorphisme et $\bs{q}(S)= \wt{S}/P(F)$ est un td-espace. En particulier pour tout $y\in \wt{S}$, la $P(F)$-orbite $\ES{O}_{F,y}^P$ 
est fermŽe dans $\wt{S}$. Observons aussi que pour tout sous-ensemble $\omega$ de $\wt{S}$, on a $\bs{q}(\omega)= \bs{q}(S \cap \omega)$. On en dŽduit que le morphisme 
quotient $$\bs{q}\vert_{\wt{S}}: \wt{S} \rightarrow \bs{q}(\wt{S})= \wt{S}/P(F)$$ est une application (ouverte et) fermŽe. 

\vskip1mm
ConsidŽrons maintenant les hypothses suivantes. 

\begin{hypoths}\label{hypothses H}
\textup{
\begin{enumerate}
\item[(H1)] Le centralisateur $G^u(F)$ de $u$ dans $G(F)$ est unimodulaire.
\item[(H2)] Il existe un feuillet $S$ pour $u$ (dans $\scrY$ relativement ˆ l'action de $P(F)$).
\end{enumerate}
}
\end{hypoths}

% remarque
\begin{remark}\label{sur les hypothses H}
\textup{
\begin{enumerate}
\item[(i)] L'hypothse (H2) assure que pour tout $y\in \wt{S}= {\rm Int}_{P(F)}(S)$, la $P(F)$-orbite $\ES{O}_{F,y}^P$ est fermŽe dans l'ouvert $\wt{S}$ de $\scrY$; ou, ce qui revient au mme d'aprs \ref{le cas o l'orbite est ouverte}\,(1), 
la $G(F)$-orbite $\ES{O}_{F,y}$ est fermŽe 
dans l'ouvert ${\rm Int}_{K}(\wt{S})$ de $\bsfrY$. En particulier la $P(F)$-orbite $\ES{O}_{F,y}^P$ est localement fermŽe dans $\scrY$\footnote{Ce que l'on savait dŽjˆ d'aprs 
Bernstein-Zelevinski \cite[Appendix]{BZ}: $\scrY_{F,u}= \scrY_u(F)$ et l'action de $P(F)$ sur $\scrY_{F,u}$ provient d'une action 
algŽbrique (dŽfinie sur $F$) de $P=P_u$ sur la variŽtŽ $\scrY_u$.} et la $G(F)$-orbite $\ES{O}_{F,y}$ 
est localement fermŽe dans $\bsfrY$. 
\item[(ii)] Si la $P(F)$-orbite $\ES{O}_{F,u}^P$ de $u$ est ouverte dans $P(F)$, alors on sait d'aprs \ref{mesure sur l'orbite ouverte}\,(i) que le centralisateur 
$G^u(F)$ est unimodulaire; d'autre part $S=\{u\}$ est un feuillet pour $u$. 
\end{enumerate}}
\end{remark}

On suppose jusqu'ˆ la fin de \ref{dŽsintŽgration} que les hypothses \ref{hypothses H} sont vŽrifiŽes. D'aprs \ref{sur les hypothses H}\,(ii), il s'agit bien  
d'une gŽnŽralisation de \ref{le cas o l'orbite est ouverte}. 

On dŽfinit comme suit, pour chaque $y\in \wt{S}= {\rm Int}_{P(F)}(S)$, une mesure de Haar $dg^y$ sur $G^y(F)$ et donc une fonctionnelle linŽaire $\bs{\nu}_y$ sur $\ES{F}_y$. 
On fixe une mesure de Haar $dg^u$ sur $G^u(F)$ et pour $s\in S$, on prend $dg^s= dg^u$. Pour $y=p^{-1}sp$ avec $s\in S$ et $p\in P(F)$, 
on prend pour $\dd g^{y}$ la mesure dŽduite de $\dd g^s= \dd g^u$ via l'homŽomorphisme ${\rm Int}_{p^{-1}}: G^s(F) \rightarrow G^{y}(F)$. 
Alors $\bs{\nu}_{y}$ se dŽduit de 
$\bs{\delta}_P(p)^{-1}\bs{\nu}_s$ via l'isomorphisme $$\ES{F}_{s} \rightarrow \ES{F}_{y},\, h\mapsto h \circ {\rm Int}_{p}  \ptf$$ 
En d'autres termes, on a 
$$\langle \bs{\nu}_{y}, h\circ {\rm Int}_p \rangle = \bs{\delta}_P(p)^{-1} \langle \bs{\nu}_s,  h \rangle \quad \hbox{pour tout} \quad h\in \ES{F}_{y}\ptf$$ 
La fonctionnelle linŽaire $\bs{\nu}_y$ sur $\ES{F}_y$ est bien dŽfinie car si $y=p'^{-1}s' p'$ avec $s'\in S$ et $p'\in P(F)$, alors 
$s'=s$ et $pp'^{-1} \in G^s(F)$; or $\langle \bs{\nu}_s, h\circ \mathrm{Int}_{pp'^{-1}}\rangle = \bs{\delta}_P(p'p^{-1})\langle \bs{\nu}_s, h\rangle$ . 
Pour $\phi \in C^\infty_{\mathrm{c}}(G(F))$, on a donc les ŽgalitŽs (au moins formellement) 
$$I_{p^{-1}sp}^P(\phi) = \bs{\delta}_P(p)^{-1} I_s^P({^p\phi})= I_s^P(\phi)\ptf$$
On en dŽduit que pour tout $(y,p)\in \wt{S}\times P(F)$ et toute fonction $\phi\in C^\infty_{\mathrm{c}}(G(F))$, on a 
l'ŽgalitŽ (formelle) $$I_{p^{-1}yp}^P(\phi) = I_y^P(\phi)\ptf\leqno{(3)}$$

\vskip1mm
Pour $y\in \wt{S}$, puisque la $P(F)$-orbite $\ES{O}_{F,u}^P$ est fermŽe dans $\wt{S}$, pour toute fonction $\psi\in C^\infty_{\rm c}(\wt{S})$, l'intŽgrale (2) 
$$I_y^P(\psi)= \int_{G^y(F)\backslash P(F)} \psi(p^{-1}yp)\bs{\delta}_P(p)^{-1} \dd\bs{\nu}_y(p)$$ est absolument convergente. 
L'hypothse (H2) assure que $\bs{q}(S)= \wt{S}/P(F)$ est un td-espace. On peut donc introduire l'espace  
$C^\infty_{\mathrm{c}}(\wt{S}/P(F))$ des fonctions localement constantes et ˆ support compact sur $\wt{S}/P(F)$. 

% lemma
\begin{lemma}
Pour $\psi\in C^\infty_{\rm c}(\wt{S})$, la fonction $y\mapsto I_y^P(\psi)$ est dans $C^\infty_{\mathrm{c}}(\wt{S}/P(F))$. 
\end{lemma}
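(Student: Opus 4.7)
The plan has three parts, corresponding to the three conditions defining $C^\infty_{\mathrm{c}}(\wt{S}/P(F))$, namely descent to the quotient, compact support, and local constancy. First, the $P(F)$-invariance of $y\mapsto I_y^P(\psi)$ is exactly the formal identity (3) established above, so this function descends to a function on $\wt{S}/P(F)$. Composing with the homeomorphism $\bs{s}: \wt{S}/P(F)\xrightarrow{\simeq} S$ reduces the problem to showing that $s\mapsto I_s^P(\psi)$ is locally constant and compactly supported on $S$. The key simplification is that, since $S$ is a feuillet for $u$, we have $G^s(F)=G^u(F)$ for every $s\in S$, and the measures $\dd g^s$ have been chosen so that $\bs{\nu}_s$ does not depend on $s\in S$. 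Thus for every $s\in S$,
$$I_s^P(\psi) = \int_{G^u(F)\backslash P(F)} \psi(p^{-1}sp)\, \bs{\delta}_P(p)^{-1} \dd\bs{\nu}_u(\bar p).$$

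Compact support is immediate. If $I_s^P(\psi)\neq 0$, there exists $p\in P(F)$ with $p^{-1}sp\in \mathrm{supp}(\psi)$, hence $\bs{q}(s)=\bs{q}(p^{-1}sp)\in \bs{q}(\mathrm{supp}(\psi))$, and the latter is compact as the continuous image of a compact set under $\bs{q}\vert_{\wt{S}}$.

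The crux is local constancy. By linearity in $\psi$ I would first reduce to the case $\psi=\mathbf{1}_\Omega$ with $\Omega\subset \wt{S}$ a compact open subset. Using the homeomorphism $\alpha: P(F)\times^{G^u(F)}S \xrightarrow{\simeq} \wt{S}$, which under (H2) further identifies with the product $(P(F)/G^u(F))\times S$, and after the inversion $p\mapsto p^{-1}$ on the first factor, the condition $p^{-1}sp\in\Omega$ cuts out a compact open subset $\wt\Omega\subset S\times (G^u(F)\backslash P(F))$. A standard compactness argument then permits writing $\wt\Omega$ as a finite disjoint union of ``rectangles'' $W_i\times V_i$, with $W_i\subset S$ and $V_i\subset G^u(F)\backslash P(F)$ compact open, and refined so that $\bs{\delta}_P^{-1}$ is a constant $c_i$ on each $V_i$. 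This produces the explicit formula
$$I_s^P(\mathbf{1}_\Omega) = \sum_i \mathbf{1}_{W_i}(s)\, c_i\, \bs{\nu}_u(V_i),$$
which is manifestly locally constant in $s\in S$.

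The step I expect to require the most care is the assertion that $\wt\Omega$ is genuinely compact, and not merely open, inside $S\times (G^u(F)\backslash P(F))$, since a priori the orbit coordinate could escape to infinity. This is however guaranteed by the fact that $\alpha$ is a homeomorphism, which is built into the definition of a feuillet for $u$ and has already been recorded in the preceding discussion. Once the identifications are installed correctly, what remains is routine td-space analysis.
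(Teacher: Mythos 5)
Your proof is correct, and the first two thirds (descent to the quotient via the formal identity $I_{p^{-1}yp}^P=I_y^P$, and compact support inside $\bs{q}(\mathrm{Supp}(\psi))$) coincide with the paper's argument. For the crux --- local constancy --- you take a genuinely different and more hands-on route: the paper transports $\psi$ through the isomorphism $\Phi\mapsto\psi_\Phi$ of the proof of Theorem 3.3.8, writes $I_y^P(\psi)=\varphi(y)^{-1}\int_{G^y(F)\backslash P(F)}\Phi(p^{-1},y)\,\bs{\delta}_P(p)^{-1}\dd\bs{\nu}_y(p)$, and invokes the local constancy of $\varphi^{-1}$ and of the inner integral (which rests on the joint local constancy and the compactness condition built into $\wt{\mathcal{H}}^{P,*}$); you instead reduce to $\psi={\bf 1}_\Omega$, pull $\Omega$ back through the homeomorphism $\alpha$, and decompose the resulting compact open subset of $S\times(G^u(F)\backslash P(F))$ into finitely many rectangles. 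Your version is more elementary and self-contained, at the price of redoing by hand what the space $\mathcal{H}^P$ was designed to encapsulate; the paper's version recycles machinery already in place. One small imprecision to fix: $\bs{\delta}_P^{-1}$ need not descend to $G^u(F)\backslash P(F)$ (nothing forces $\bs{\delta}_P$ to be trivial on $G^u(F)$; the remark after (2) only gives $\bs{\delta}_P=\bs{\delta}_{\lambda,k}$ there), so you cannot literally require it to be \guill{constant on $V_i$}. This is harmless: only ${\bf 1}_\Omega(p^{-1}sp)$ need be treated as a function on the quotient, and each term of your sum should be read as ${\bf 1}_{W_i}(s)\,\mu_i$ with $\mu_i=\langle\bs{\nu}_u,{\bf 1}_{\tilde V_i}\bs{\delta}_P^{-1}\rangle$, where $\tilde V_i$ is the preimage of $V_i$ in $P(F)$; the conclusion is unchanged.
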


% preuvz
\begin{proof}
Soit $\psi\in C^\infty_{\rm c}(\wt{S})$. D'aprs l'ŽgalitŽ (3), la fonction $y\mapsto I_y^P(\psi)$ sur $\wt{S}$ se factorise par $\wt{S}/P(F)$; et 
elle est ˆ support le compact ouvert $\bs{q}(\mathrm{Supp}(\psi))$ de $\wt{S}/P(F)$ (rappellons que l'application continue $\bs{q}\vert_{\wt{S}}: \wt{S} \rightarrow \wt{S}/P(F)$ est ouverte et fermŽe). 
Il reste ˆ vŽrifier que la fonction 
$y\mapsto I_y^P(\psi)$ est lisse sur $\wt{S}/P(F)$. Pour cela reprenons l'isomorphisme $\wt{\mathcal{H}}^P \rightarrow C^\infty_{\rm c}(\scrY),\, \Phi \mapsto \psi_\Phi$ 
du dŽbut de la preuve de \ref{rŽponse aux questions}. Il induit par restriction un isomorphisme du sous-espace $\wt{\mathcal{H}}^{P,*}\subset \wt{\mathcal{H}}^P$ formŽ des fonctions 
ˆ support dans $P(F)\times \wt{S}$ sur le sous-espace $C^\infty_{\rm c}(\wt{S})\subset C^\infty_{\rm c}(\scrY)$. \'Ecrivons $\psi= \psi_\Phi$ avec $\Phi\in \wt{\mathcal{H}}^{P,*}$. Pour 
$y\in \wt{S}$, on a 
\begin{eqnarray*}
I_y^P(\psi)&=& \int_{G^y(F)\backslash P(F)} \varphi(p^{-1}yp)^{-1}\Phi(1,p^{-1}yp)\bs{\delta}_P(p)^{-1}\dd \bs{\nu}_y(p)\\
&=& \varphi(y)^{-1} \int_{G^y(F)\backslash P(F)} \Phi(p^{-1}\!,y) \bs{\delta}_P(p)^{-1} \dd \bs{\nu}_y(p)\ptf
\end{eqnarray*}
La fonction $y \mapsto \int_{G^y(F)\backslash P(F)} \Phi(p^{-1}\!,y) \bs{\delta}_P(p)^{-1} \dd \bs{\nu}_y(p)$ est localement constante sur $S$. 
Puisque la fonction $y\mapsto \varphi(y)^{-1}$ est localement constante sur $\scrY$, elle l'est \textit{a fortiori} sur $S$. Par consŽquent la fonction 
$y \mapsto I_y^P(\psi)$ est localement constante sur $S$, donc sur $\wt{S}$ puisqu'elle est $P(F)$-invariante. 
\end{proof}

% lemma
\begin{lemma}
\begin{enumerate}
\item[(i)] L'application linŽaire $$C^\infty_{\rm c}(\wt{S})\rightarrow C^\infty_{\mathrm{c}}(\wt{S}/P(F)),\, \psi \mapsto (y\mapsto I_y^P(\psi))$$ est surjective.
\item[(ii)] Si $I_y^P(\psi)= 0$ pour tout $y\in \wt{S}$, alors $$\int_{\wt{S}}\varphi(y)\psi(y) \dd y=0\ptf$$
\end{enumerate}
\end{lemma}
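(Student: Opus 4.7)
\noindent\emph{Plan de d\'emonstration.} L'approche consiste \`a reprendre la m\'ethode de \ref{surjectivit� f donne Phi} et \ref{unicit� de tilde mu}, adapt\'ee \`a la structure de feuillet. Commen\c{c}ons par une observation structurelle: puisque $G^s(F)= G^u(F)$ pour tout $s\in S$ et que tout $z\in G^u(F)=G^s(F)$ fixe $s$ par conjugaison, le sous-groupe $G^u(F)$ agit trivialement sur $S$. L'hom\'eomorphisme $\alpha: P(F)\times^{G^u(F)}S \to \wt S$ fournit alors une identification $P(F)$-\'equivariante
$$\wt S\simeq (G^u(F)\backslash P(F))\times S\vg$$
o\`u $P(F)$ agit par translations \`a gauche sur le premier facteur et trivialement sur le second. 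Pour tout sous-groupe ouvert compact $K_P$ de $P(F)$ et tout ouvert compact $\Omega\subset S$, la propri\'et\'e de feuillet assure que l'application $(\bar k,\omega)\mapsto k\omega k^{-1}$ induit un hom\'eomorphisme de $\bar K_P\times \Omega$ sur un ouvert compact \guill{bo\^ite} $\bar K_P\cdot \Omega$ de $\wt S$, o\`u $\bar K_P= K_P/(K_P\cap G^u(F))$.

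Pour la partie (i), on se ram\`ene par partition de l'unit\'e au cas o\`u $\xi$ est la fonction caract\'eristique d'un tel ouvert compact $\Omega\subset S$ multipli\'ee par une constante. On fixe alors un $K_P$ comme ci-dessus et une fonction $\chi\in C^\infty_\mathrm{c}(P(F))$ \`a support dans $K_P$, invariante par translations \`a gauche par $K_P\cap G^u(F)$. On d\'efinit $\psi$ par $\psi(k\omega k^{-1})= \chi(k)\xi(\omega)$ pour $(\bar k,\omega)\in \bar K_P\times \Omega$, et $\psi=0$ ailleurs. Pour $s\in \Omega$, le calcul de $I_s^P(\psi)$ se restreint aux $p\in P(F)$ tels que $p^{-1}sp\in \bar K_P\cdot \Omega$, ce qui --- par la propri\'et\'e de feuillet --- impose $p\in G^u(F)K_P$; on obtient
$$I_s^P(\psi) = \xi(s) \int_{G^u(F)\backslash P(F)} \chi(p)\bs{\delta}_P(p)^{-1}\dd \bs{\nu}_s(p)\vg$$
et une normalisation ad\'equate de $\chi$ rend l'int\'egrale ci-dessus \'egale \`a $1$. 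Pour $s\in S\smallsetminus \Omega$, on a $I_s^P(\psi)=0=\xi(s)$.

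Pour la partie (ii), on d\'ecompose $\psi$ en une somme finie de fonctions support\'ees chacune sur une bo\^ite $\bar K_P\cdot \Omega$, et il suffit de v\'erifier l'\'egalit\'e sur chaque bo\^ite. Param\'etrisons la bo\^ite par $(\bar k,\omega)$: la mesure $\varphi(y)\dd y$ se transporte via $\alpha$ en une mesure dont la partie \guill{horizontale} (sur $\bar K_P$) absorbe exactement le facteur $\bs{\delta}_{\lambda,(1,k)}(k)$ apparaissant dans la transformation $\varphi(k\omega k^{-1})=\bs{\delta}_{\lambda,(1,k)}(k)\varphi(\omega)$ (\ref{lemme RRao}). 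Apr\`es simplification, il reste le long des $P(F)$-orbites la mesure $\bs{\delta}_P(k)^{-1}\dd \bs{\nu}_u$ qui est celle intervenant dans la d\'efinition de $I_s^P$. Cette identification entra\^ine que $\int_{\wt S}\varphi(y)\psi(y)\dd y$ s'exprime comme l'int\'egrale de la fonction $\omega \mapsto \varphi(\omega) I_\omega^P(\psi)$ contre une mesure de Radon bien d\'efinie sur $S$; en particulier, si $I_y^P(\psi)=0$ pour tout $y\in \wt S$, alors $\int_{\wt S}\varphi(y)\psi(y)\dd y=0$.

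L'obstacle principal sera le calcul pr\'ecis du Jacobien de la param\'etrisation $(\bar k,\omega)\mapsto k\omega k^{-1}$ et la v\'erification que le facteur $\bs{\delta}_{\lambda,(1,k)}(k)$ y intervient exactement avec la puissance n\'ecessaire pour que la compensation avec $\varphi(k\omega k^{-1})$ fonctionne. Il faudra \'egalement choisir de mani\`ere coh\'erente les mesures de Haar sur $G^u(F)$, sur $K_P\cap G^u(F)$, et sur les quotients associ\'es. Cette d\'ecomposition \'etablira simultan\'ement (ii) et une formule explicite pour la mesure $\bs{\eta}_S$ sur $\wt S/P(F)$ annonc\'ee en \ref{sur la CV des IOU}\,(2).
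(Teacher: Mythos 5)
Your part~(i) is essentially correct and close to the paper's own argument: both reduce to functions supported over a single open compact set and write down an explicit preimage. The paper does it slightly more economically, taking an arbitrary open compact $\Omega\subset\wt S$ and $\psi=\kappa^{-1}\cdot\mathbf 1_\Omega\cdot\xi$ with $\kappa(y)=I_y^P(\mathbf 1_\Omega)>0$, which dispenses with your auxiliary function $\chi$ and the coset bookkeeping. (On that bookkeeping: with the parametrization $(k,\omega)\mapsto k\omega k^{-1}$ the ambiguity is $k\mapsto kz$, $z\in K_P\cap G^u(F)$, so the relevant quotient is $P(F)/G^u(F)$, not $G^u(F)\backslash P(F)$, and $\chi$ must be invariant under \emph{right} translation by $K_P\cap G^u(F)$.)

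Part~(ii), however, is not a proof. You reduce the statement to the computation of the Jacobian of $(\bar k,\omega)\mapsto k\omega k^{-1}$ and to checking that $\bs{\delta}_{\lambda,(1,k)}$ enters with exactly the power needed to compensate $\varphi(k\omega k^{-1})=\bs{\delta}_{\lambda,(1,k)}(k)\varphi(\omega)$, and you then declare this verification to be \emph{l'obstacle principal} without carrying it out. That verification \emph{is} the content of (ii) --- it amounts to disintegrating $\varphi(y)\dd y$ along the $P(F)$-orbits --- so deferring it leaves the lemma unproved. The paper avoids the Jacobian entirely by a one-dimensionality argument: for $\Omega$ an open compact of $\wt S$, let $\mathcal{C}_\Omega$ be the space of functions supported in $\wt\Omega=\mathrm{Int}_{P(F)}(\Omega)$ and constant on each $p^{-1}\Omega p$. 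Every element of $\mathcal{C}_\Omega$ is a finite linear combination of the translates ${}^p(\mathbf 1_\Omega)$, so the space of linear functionals $T$ on $\mathcal{C}_\Omega$ satisfying $T({}^p\psi)=\bs{\delta}_P(p)\,T(\psi)$ is one-dimensional. Both $T(\psi)=\int_{\wt S}\varphi(y)\psi(y)\dd y$ (this is the covariance established via \ref{lemme RRao}, as in the proof of \ref{ACV et distribution invariante}) and $T'(\psi)=I_{y'}^P(\psi)$ for $y'\in\wt\Omega$ (this is the normalization of the measures $\bs{\nu}_y$ along the orbits, which gives $I^P_{p^{-1}yp}=I^P_y$) lie in that space, hence are proportional, and (ii) follows immediately. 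If you wish to keep your direct Fubini-type route you must actually prove the measure identity box by box; the proportionality argument is precisely the device that yields it without any explicit Jacobian.
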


% preuve
\begin{proof}
Il suffit d'adapter celle du lemme \ref{surjectivitŽ f donne Phi}. Posons $\mathcal{C}= C^\infty_{\rm c}(\wt{S})$ et $\ES{F}=C^\infty_{\mathrm{c}}(\wt{S}/P(F))$. 
Soit $\Omega$ un sous-ensemble ouvert compact de $\wt{S}$. 
Posons $$\wt{\Omega}= \mathrm{Int}_{P(F)}(\Omega)= \bigcup_{p\in P(F)} p^{-1} \Omega p\ptf$$ Soit $\mathcal{C}_\Omega$ le sous-espace de $\mathcal{C}$ formŽ des fonctions ˆ support dans $\wt{\Omega}$ 
et qui sont constantes sur les sous-ensembles $p^{-1}\Omega p$ pour tout $p\in P(F)$. Soit $\ES{F}_{\bs{q}(\Omega)}$ le sous-espace de $\ES{F}$ formŽ des fonctions ˆ support dans $\bs{q}(\Omega)= \wt{\Omega}/P(F)$ et 
qui sont constantes sur $\bs{q}(\Omega)$. On a $\dim_{\mbb{C}}(\ES{F}_{\bs{q}(\Omega)})=1$. 
L'application $\psi \mapsto (y\mapsto I_y^P(\psi))$ envoie $\mathcal{C}_\Omega$ dans $\ES{F}_{\bs{q}(\Omega)}$ et il 
suffit de vŽrifier les assertions du lemme pour ces deux espaces. 

Prouvons (i). Soit $\xi\in \ES{F}_{\bs{q}(\Omega)}$. Notons $\kappa$ l'ŽlŽment de $\ES{F}_{\bs{q}(\Omega)}$ dŽfini par 
$$\kappa(y) = I_y^P({\bf 1}_\Omega) >0\quad \hbox{pour tout}\quad y\in \wt{\Omega}\ptf$$ 
Soit $\psi\in \mathcal{C}_\Omega$ la fonction dŽfinie par $\psi = \kappa^{-1}\cdot {\bf 1}_\Omega \cdot \xi$. 
Elle vŽrifie $$\psi(p^{-1}y p) = \kappa(y)^{-1} {\bf 1}_\Omega(p^{-1} y p) \xi(y)\quad \hbox{pour tout}\quad (y,p)\in \wt{\Omega}\times P(F)\ptf$$
On en dŽduit que $I_y^P(\psi) =\xi(y)$ pour tout $y\in \wt{\Omega}$. 

Prouvons (ii). Tout ŽlŽment de $\mathcal{C}_\Omega$ est combinaison linŽaire (finie) de fonctions 
${^p({\bf 1}_\Omega)}= {\bf 1}_\Omega \circ {\rm Int}_{p^{-1}}$ avec $p\in P(F)$. On en dŽduit que les fonctionnelles linŽaires $T$ sur $\mathcal{C}_\Omega$ 
telles que $T({^p\psi}) = \bs{\delta}_P(p)T(\psi)$ pour tout $\psi\in \mathcal{C}_\Omega$ et 
tout $p\in P(F)$, forment un espace vectoriel de dimension $1$. Pour $y'\in \wt{\Omega}$, les fonctionnelles linŽaires 
$$T(\psi)= \int_{\scrY}\varphi(y)\psi(y) \dd y\quad \hbox{et} \quad T'(\psi)= I_{y'}^P(\psi)$$ 
sont dans cet espace; elles sont donc proportionnelles. Cela implique (ii).
\end{proof}

Soit $\bs{\eta}_{S}$ la fonctionnelle linŽaire sur $C^\infty_{\mathrm{c}}(\wt{S}/P(F))$ dŽfinie par 
$$\langle \bs{\eta}_{S}, \xi\rangle = \int_{\wt{S}}\varphi(y) \psi(y)  \dd y $$ 
pour toute fonction $\psi\in C^\infty_{\rm c}(\scrY)$ telle que $\xi(y)= I_y^P(\psi)$. On Žcrit aussi 
$$\langle \bs{\eta}_{S}, \xi\rangle  = \int_{\wt{S}/P(F)} \xi(y) \dd \bs{\eta}_{S}(y)\ptf$$ 

% thŽorme
\begin{proposition}\label{le cas slice}
On suppose que les hypothses \ref{hypothses H} sont vŽrifiŽes. Soit $S$ un feuillet pour $u$ (dans $\scrY$ relativement ˆ l'action de $P(F)$) et 
soit $\wt{S}= \mathrm{Int}_{P(F)}(S)$. 
\begin{enumerate}
\item[(i)] Pour toute fonction $f\in C^\infty_{\rm c}(G(F))$ et tout $y\in \wt{S}$, l'intŽgrale orbitale $$I_y(f)\;(= I_y^P(f^K))$$ 
est absolument convergente.
\item[(ii)] Pour toute fonction $f\in C^\infty_{\rm c}(G(F))$, on a l'ŽgalitŽ $$\int_{\wt{S}/P(F)}\varphi(y) I_y^P(f^K)  \dd \bs{\eta}_{S}(y) = \int_{\wt{S}}\varphi(y)f^K(y) \dd y\vg$$ les deux intŽgrales 
Žtant absolument convergente. 
\end{enumerate}
\end{proposition}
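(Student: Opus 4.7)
The plan is to prove (i) first---the absolute convergence of the orbital integral $I_y(f)$ for $y \in \wt S$---and then deduce (ii) by extending the defining formula of $\bs\eta_S$ from $\psi \in C^\infty_\mathrm{c}(\wt S)$ to $\psi = f^K\vert_{\wt S}$ via a truncation argument. Once (i) is in hand, (ii) reduces to a dominated-convergence reformulation of the definition of $\bs\eta_S$ recalled just above the proposition.

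For (i), the formal identity $I_y(f) = I_y^P(f^K)$ from Remark \ref{�galit� formelle} reduces the problem to the absolute convergence of $I_y^P(f^K) = \int_{G^y(F)\bsl P(F)} f^K(p^{-1}yp) \bs\delta_P(p)^{-1} \dd \bs\nu_y(p)$. The invariance relation (3) together with the prescribed transport of $\dd g^y$ by $\mathrm{Int}_{p^{-1}}$ reduces further to $y = s \in S$. By hypothesis (H2), the orbit $\ES O^P_{F,s}$ is closed in $\wt S$ and the orbit map furnishes a homeomorphism $G^s(F)\bsl P(F) \simeq \ES O^P_{F,s}$. The strategy is now to find a cutoff $\chi \in C^\infty_\mathrm{c}(\wt S)$ satisfying $|f^K(z)| \leq \chi(z)$ for every $z \in \ES O^P_{F,s}$; granting this, $I^P_s(|f^K|) \leq I^P_s(\chi) < \infty$, the latter convergence being clear because $\chi$ has compact support in $\wt S$ and $\ES O^P_{F,s}$ is closed in $\wt S$.

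For (ii), choose an increasing sequence $\chi_n \in C^\infty_\mathrm{c}(\wt S)$ with $\chi_n \nearrow \mathbf 1_{\wt S}$ pointwise, and set $\psi_n = \chi_n \cdot f^K\vert_{\wt S} \in C^\infty_\mathrm{c}(\wt S)$. The defining identity of $\bs\eta_S$ gives, for each $n$,
\[
\int_{\wt S/P(F)} I^P_y(\psi_n) \dd \bs\eta_S(y) = \int_{\wt S} \varphi(y) \psi_n(y) \dd y.
\]
Dominated convergence on the right, with integrable majorant $\|f^K\|_\infty \cdot \varphi \cdot \mathbf 1_{\mathrm{supp}(f^K) \cap \scrX}$ (integrable because $\mathrm{supp}(f^K) \cap \scrX$ is compact in $\scrX$), yields the RHS of (ii). Dominated convergence on the left, controlled by the pointwise estimate $|I^P_y(\psi_n)| \leq I^P_y(|f^K|)$ furnished by (i) and $\bs\eta_S$-integrable thanks to the RHS computation, yields the LHS. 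The $\varphi(y)$ factor explicit in the stated LHS aligns with the $\varphi$ built into the definition of $\bs\eta_S$.

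The principal obstacle is the construction of the cutoff $\chi$ in (i). Because $\wt S$ is only locally closed in $G(F)$, the intersection $\mathrm{supp}(f^K) \cap \wt S$ is not in general relatively compact in $\wt S$, so the naive choice $\chi = \|f^K\|_\infty \mathbf 1_{\mathrm{supp}(f^K) \cap \wt S}$ does not land in $C^\infty_\mathrm{c}(\wt S)$. The slice structure provides the needed leverage: $\wt S$ is homeomorphic to $P(F) \times^{G^s(F)} S$ and the quotient $\bs q\vert_{\wt S} \colon \wt S \to S$ is open and closed onto a td-space. I would first truncate in $S$ on a compact-open neighborhood of $s$ containing the closed subset $\bs q(\mathrm{supp}(f^K) \cap \wt S)$, and then lift this truncation $P(F)$-equivariantly to a compactly supported function on $\wt S$ dominating $|f^K|$ on $\ES O^P_{F,s}$. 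Executing this lifting while maintaining compact support in $\wt S$ is the subtle step.
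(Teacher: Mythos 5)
Your plan inverts the logical order of the paper's proof, and the inversion is fatal to your step (i): the dominating function $\chi\in C^\infty_{\rm c}(\wt{S})$ with $|f^K|\leq \chi$ on $\ES{O}^P_{F,s}$ cannot exist in general. The orbit $\ES{O}^P_{F,s}$ is closed in $\wt{S}$ but only locally closed in $G(F)$; its closure in $G(F)$ meets smaller strata, so $\{z\in \ES{O}^P_{F,s} \mid f^K(z)\neq 0\}$ need not be relatively compact in $\wt{S}$. (Already for $G=\mathrm{SL}_2$, $u$ regular unipotent, $S=\{u\}$ and $f\geq 0$ the characteristic function of a small compact open subgroup containing $1$: the points $\mathrm{diag}(t,t^{-1})^{-1}u\,\mathrm{diag}(t,t^{-1})$ lie in $\ES{O}^P_{F,u}\cap\mathrm{supp}(f)$ for all large $|t|_F$ and leave every compact subset of $\wt{S}$.) Any $\chi\in C^\infty_{\rm c}(\wt{S})$ vanishes off a compact subset of $\wt{S}$, so it cannot dominate $|f^K|$ on the orbit. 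Your proposed repair runs into the same wall: $\bs{q}(\mathrm{supp}(f^K)\cap\wt{S})$ is closed in $\wt{S}/P(F)$ but in general not compact, so no compact open neighbourhood of $s$ contains it. If such a domination existed, the convergence of $I_y(f)$ would be a triviality and the proposition --- indeed the Deligne--Ranga Rao theorem itself --- would have no content.

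The workable order is the reverse one, and your treatment of (ii) already contains all the needed ingredients. The integral $\int_{\wt{S}}\varphi(y)f^K(y)\dd y$ converges absolutely for free, because $\scrX$ is closed in $G(F)$ so that $\mathrm{supp}(f^K)\cap\scrX$ is compact (this is \ref{ACV et distribution invariante}, restricted to the $P(F)$-invariant open subset $\wt{S}$ of $\scrY$). Writing the defining identity of $\bs{\eta}_S$ for the truncations $\mathbf{1}_\omega\cdot f^K\vert_{\wt{S}}$ with $\omega$ compact open in $\wt{S}$, and applying it first to $|f^K|$, monotone convergence gives
$$\int_{\wt{S}/P(F)}\varphi(y)\,I_y^P(|f^K|)\dd\bs{\eta}_S(y)=\int_{\wt{S}}\varphi(y)|f^K(y)|\dd y<+\infty\ptf$$
This produces the $\bs{\eta}_S$-integrable majorant you were trying to import from (i), so dominated convergence yields (ii); and the finiteness of $I_y^P(|f^K|)$ --- i.e. assertion (i) --- is then read off from the positivity of the integrand (together with the local constancy of $y\mapsto I_y^P(\mathbf{1}_\omega\cdot|f^K|)$ to pass from almost every $y$ to every $y$). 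So keep your paragraph on (ii), run it with $|f^K|$ first to decouple it from (i), and discard the cutoff construction entirely.
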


% preuve
\begin{proof}Pour toute fonction $f\in C^\infty_{\rm c}(G(F))$ et tout sous-ensemble ouvert compact $\omega$ de $\wt{S}$, 
l'intŽgrale orbitale $I_y^P({\bf 1}_\omega \cdot f^K\vert_{\wt{S}})$ est bien dŽfinie et elle est absolument convergente; 
et d'aprs la dŽfinition de la fonctionnelle linŽaire $\bs{\eta}_{S}$, on a 
$$ \int_{\wt{S}/P(F)}\varphi(y) I_y^P({\bf 1}_\omega\cdot f^K\vert_{\wt{S}}) \dd \bs{\eta}_{S}(y) = \int_{\omega}\varphi(y) f^K(y)\dd y\ptf$$ 
Posons $\bs{\mathfrak{S}} = \mathrm{Int}_K(\wt{S})= \mathrm{Int}_{G(F)}(S)$; c'est un ouvert $G(F)$-invariant de la $F$-strate $\bsfrY$. 
Puisque l'intŽgrale $$I_{\bs{\mathfrak{S}}}(f)= \int_{\wt{S}}\varphi(y) f^K(y)\dd y $$ est absolument convergente (cf. \ref{le cas o l'orbite est ouverte}), d'aprs le thŽorme de convergence dominŽe, l'intŽgrale 
$$ \int_{\wt{S}/P(F)}\varphi(y) I_y^P(f^K) \dd \bs{\eta}_{S}(y)$$ l'est aussi et elle co\"{\i}ncide avec $I_{\bs{\mathfrak{S}}}(f)$; 
cela prouve (ii). On en dŽduit que pour toute fonction $f\in C^\infty_{\rm c}(G(F))$ avec $f\geq 0$, on a 
$I_y^P(f^K) <+\infty$, ce qui prouve (i).  
\end{proof}

% remarque
\begin{remark}
\textup{
On a une proposition analogue pour les $G(F)$-orbites dans une $F$-strate de $\NF$, sous les mmes hypothses que \ref{hypothses H}. 
}\end{remark}

% remarque
\begin{remark}
\textup{
\begin{enumerate}
\item[(i)]Le cas o l'orbite $\ES{O}_{F,u}$ est ouverte dans $\bsfrY$ (\ref{mesure sur l'orbite ouverte}) est un cas particulier de \ref{le cas slice}: il suffit en effet de prendre $S= \{u\}$ 
(cf. \ref{sur les hypothses H}\,(ii)). 
\item[(ii)] Pour dŽsintŽgrer la mesure globale $\varphi(y)\dd y$ sur $\scrY$, il suffit qu'il existe une famille $(\Omega_i)_{i\in I}$ (forcŽment dŽnombrable) d'ouverts $P(F)$-invariants 
$\Omega_i$ de $\scrY$ deux-ˆ-deux disjoints telle que: 
\begin{itemize}
\item L'ouvert $\scrY^*= \bigcup_{i\in I}\Omega_i$ de $\scrY$ est dense dans $\scrY$ (pour $\mathrm{Top}_F$);
\item Pour chaque $i\in I$, les centralisateurs $G^y(F)$ des ŽlŽments $y\in \Omega_i$ sont unimodulaires et conjuguŽs dans $P(F)$;
\item Pour chaque $i\in I$ et pour un (i.e. pour tout) $u_i\in \Omega_i$, il existe un feuillet $S_i$ pour $u_i$ tel que $\wt{S}_i = \Omega_i$.
\end{itemize}
Alors pour toute fonction $f\in C^\infty_{\rm c}(F(F))$, on a 
$$I_{\bsfrY}(f)= \sum_{i\in I} I_{\mathrm{Int}_K(\Omega_i)}(f)= \int_{\Omega_i /P(F)}\varphi(y) I_y^P(f^K)  \dd \bs{\eta}_{S_i}(y)\ptf$$ 
\end{enumerate}
}
\end{remark}

%%%%%%%%%%%%%%%%%%%%%%%
\subsection{Une remarque sur les corps globaux}\label{le cas global}
Dans cette sous-section, on suppose que $F$ est un corps global, \cad un corps de nombres  ($p=1$) ou un corps de fonctions ($p>1$). 
Soit $\mbb{A}=\mbb{A}_F$ l'anneau des adles de $F$. 
On note $\vert\,\vert_{\mbb{A}}$ la valeur absolue sur $\mbb{A}$ dŽfinie par 
$$\vert x \vert_{\mbb{A}}= \prod_v \vert x_v\vert_{F_v} \quad \hbox{pour}\quad x = \prod_v x_v \in \mbb{A}$$ o $v$ parcourt les places 
de $F$ et $x_v\in F_v$ (le complŽtŽ de $F$ en $v$). 

Soit $u\in \UF\smallsetminus \{1\}$. On pose $\scrY= \scrY_{F,u}$ et $\scrX= \scrX_{F,u}$. 
Soient aussi $\lambda \in \Lambda_{F,u}^{\rm opt}$ et $k= m_u(\lambda)$. Alors $\scrX= G_{\lambda,k}(F)$ et on pose 
$\overline{\scrX}= G_{\lambda,k}(F)/G_{\lambda ,k+1}(F)$. 
On pose aussi $$\scrX_{\mbb{A}}= G_{\lambda,k}(\mbb{A}) \quad \hbox{et}\quad \overline{\scrX}_{\!\!\mbb{A}}= G_{\lambda,k}(\mbb{A})/G_{\lambda,k+1}(\mbb{A})\ptf$$ 

Reprenons les constructions de \ref{ŽnoncŽ} en remplaant le corps de base $F$ par l'anneau $\mbb{A}$. 
Pour $x\in \scrX_{\mbb{A}}$ et $i\in \mbb{Z} $, le $\mbb{A}$-endomorphisme ${\rm Ad}_x - {\rm Id}$ de $\mathfrak{g}(\mbb{A})$ 
induit par restriction et passage aux quotients un morphisme $\mbb{A}$-linŽaire 
$$\eta_{\lambda,x}(i\mathpvg \mbb{A}):\mathfrak{g}_\lambda(i\mathpvg\mbb{A}) \rightarrow \mathfrak{g}_\lambda(k+i\mathpvg \mbb{A})$$ 
qui ne dŽpend que l'image $\bar{x}$ de $x$ dans $\overline{\scrX}_{\!\!\mbb{A}}$. Pour $i=1,\ldots ,k-1$, fixons une $F$-base $(H_{-i,j})_{1\leq j \leq d_i}$ de $\mathfrak{g}_\lambda(-i;F)$ et une $F$-base 
$(H_{k-i,j})_{1\leq j \leq d_i}$ de $\mathfrak{g}_\lambda(k-i;F)$; rappelons que $d_i= \mathrm{dim}_F(\mathfrak{g}_\lambda(i\mathpvg F))$ et que l'on sait que $d_{-i}\;(=d_i)= d_{k-i}$. Pour 
$\bar{x}\in \overline{\scrX}_{\!\!\mbb{A}}$, notons $A_{-i}(\bar{x})\in M(d_i,\mbb{A})$ la matrice de $\eta_{\lambda,\bar{x}}(-i\mathpvg\mbb{A})$ relativement 
ˆ ces bases et posons $$\varphi_{-i}(\bar{x})= \vert {\rm det}_{\mbb{A}}(A_{-i}(\bar{x}))\vert_{\mbb{A}}^{1/2}\ptf$$ Enfin posons 
$$\varphi(\bar{x})= \varphi_{-1}(\bar{x})\cdots \varphi_{1-k}(\bar{x})\geq 0\ptf$$ 
On relve $\varphi$ en une fonction sur $\scrX_{\mbb{A}}$. On a clairement: 

\begin{lemma}
L'application $\varphi: \scrX_{\mbb{A}}\rightarrow \mbb{R}_+$ ainsi dŽfinie 
ne dŽpend pas du choix des $F$-bases $(H_{-i,j})$ et $(H_{k-i,j})$ pour $i=1,\ldots ,k-1$.
\end{lemma}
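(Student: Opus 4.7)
La strat�gie sera de ramener l'ind�pendance de $\varphi$ par rapport aux choix de bases � la formule du produit $\prod_v |\alpha|_{F_v}=1$ pour $\alpha\in F^\times$. L'observation cruciale est que les bases choisies sont des $F$-bases; par cons�quent, les matrices de passage entre deux tels choix ont leurs coefficients dans $F$, m�me si la matrice $A_{-i}(\bar{x})$ a ses coefficients dans $\mbb{A}$.

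Concr�tement, je fixerais $i\in \{1,\ldots ,k-1\}$ et deux autres $F$-bases $(H'_{-i,j})_{1\leq j\leq d_i}$ de $\mathfrak{g}_\lambda(-i\mathpvg F)$ et $(H'_{k-i,j})_{1\leq j\leq d_i}$ de $\mathfrak{g}_\lambda(k-i\mathpvg F)$. Notant $P_{-i}\in \mathrm{GL}(d_i,F)$ et $Q_{k-i}\in \mathrm{GL}(d_i,F)$ les matrices de passage associ�es, l'intrins�cit� du morphisme $\mbb{A}$-lin�aire $\eta_{\lambda,\bar{x}}(-i\mathpvg \mbb{A})$ entra�ne, pour tout $\bar{x}\in \overline{\scrX}_{\!\!\mbb{A}}$, la relation $A'_{-i}(\bar{x})= Q_{k-i}^{-1} A_{-i}(\bar{x}) P_{-i}$ dans $M(d_i,\mbb{A})$. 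D'o� l'�galit� $\det_{\mbb{A}}(A'_{-i}(\bar{x}))= \det(P_{-i})\det(Q_{k-i})^{-1}\det_{\mbb{A}}(A_{-i}(\bar{x}))$, le facteur multiplicatif appartenant � $F^\times$.

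Il suffirait ensuite d'invoquer la formule du produit: pour tout $\alpha\in F^\times$, on a $|\alpha|_{\mbb{A}}= \prod_v |\alpha|_{F_v}=1$. Appliqu�e � $\alpha= \det(P_{-i})\det(Q_{k-i})^{-1}\in F^\times$, elle donne imm�diatement $|\det_{\mbb{A}}(A'_{-i}(\bar{x}))|_{\mbb{A}}= |\det_{\mbb{A}}(A_{-i}(\bar{x}))|_{\mbb{A}}$, donc $\varphi'_{-i}(\bar{x})= \varphi_{-i}(\bar{x})$. En faisant le produit sur $i=1,\ldots ,k-1$, on en d�duit $\varphi'(\bar{x})= \varphi(\bar{x})$, ce qui �tablit le lemme.

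Aucun obstacle technique n'est � pr�voir: la d�monstration est purement formelle et repose uniquement sur la formule du produit, l'ingr�dient non trivial �tant d�j� encapsul� dans la d�finition de $|\cdot|_{\mbb{A}}$. Observons au passage que la m�me preuve montre que $\varphi$ est bien d�finie aussi lorsqu'on part d'une $F$-base de $\mathfrak{g}_\lambda(-i\mathpvg F)$ et d'une $F$-base de $\mathfrak{g}_\lambda(k-i\mathpvg F)$ sans imposer la compatibilit� avec l'action de $u$ consid�r�e en \ref{le lemme-cl�}.
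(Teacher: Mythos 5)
Votre preuve est correcte et correspond exactement � l'argument que le texte sous-entend en �crivant \guill{On a clairement} sans d�tailler: les matrices de passage entre deux $F$-bases sont � coefficients dans $F$, donc le d�terminant de $A_{-i}(\bar{x})$ n'est modifi� que par un facteur de $F^\times$, que la formule du produit $\vert \alpha\vert_{\mbb{A}}=1$ �limine. C'est pr�cis�ment ce qui distingue le cas ad�lique du cas local de \ref{le lemme-cl�}, o� $\varphi$ n'est d�finie qu'� une constante $>0$ pr�s.
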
 

On note $\scrY_{\mbb{A}}$ la \textit{$\mbb{A}$-lame} dans $\UU_{\mbb{A}}= (\prod_v \UU_{F_v})\cap G(\mbb{A})$ dŽfinie par 
$$\scrY_{\mbb{A}} = \left(\prod_v \scrY_v \right) \cap G(\mbb{A})\quad \hbox{avec} \quad \scrY_v = \scrY_{F_v,u}$$ o $v$ parcourt les places de $F$ (voir aussi ˆ \cite[Annexe~C]{L}). Pour chaque $v$, 
puisque l'extension $F_v/F$ est sŽparable de degrŽ de transcendance infini, 
d'aprs \ref{descente sŽparable}, le co-caractre $\lambda$ est $(F_v,u)$-optimal (pour l'action par conjugaison de $G(F_v)$) et on a l'ŽgalitŽ $\scrY= \scrY_v \cap G(F)$. 
En d'autres termes la $F$-lame $\scrY$ se plonge diagonalement dans la $\mbb{A}$-lame $\scrY_{\mbb{A}}$. On a 
$$\varphi(y) >0\quad\hbox{pour tout}\quad y\in \scrY_{\mbb{A}}$$ et 
$$\varphi(y)=1\quad\hbox{pour tout}\quad y\in \scrY\ptf$$ 

Cette fonction $\varphi>0$ sur $\scrY_{\mbb{A}}$ jouera un r™le essentiel dans la dŽcomposition en produit de distributions locales 
de la contribution ˆ la formule des traces pour $G(\mbb{A})$ associŽe en \cite{L} ˆ la $F$-strate $\bsfrY$ de $\UU_F$.

%%%%%%%%%%%%%%%%%%%%%%%%%%%%%%%%%%

\end{document}